\documentclass[a4paper,11pt]{article}

\usepackage{color}


\usepackage{ifthen}
\usepackage[utf8]{inputenc}
\usepackage[english]{babel}
\usepackage{amsmath,amsfonts,amssymb,amsthm}
\usepackage{centernot}
\usepackage{graphicx}
\usepackage{bbding}
\usepackage{xcolor}
\usepackage{mathtools}
\usepackage{hyperref}
\usepackage{enumerate}
\usepackage[norelsize]{algorithm2e}
\usepackage{hyperref}
\usepackage{tikz}
\usepackage{wasysym}

\newtheorem{theorem}{Theorem}
\newtheorem*{metathm}{Meta-Theorem}
\newtheorem{lemma}[theorem]{Lemma}
\newtheorem{claim}[theorem]{Claim}

\newtheorem{corollary}[theorem]{Corollary}

\theoremstyle{definition}
\newtheorem{definition}[theorem]{Definition}

\theoremstyle{remark}


\newcommand{\ignore}[1]{} 



\newcommand{\EE}{\ensuremath{\mathbb E}}

\newcommand{\PP}{\ensuremath{\mathbb P}}



\renewcommand{\phi}{\varphi}

\newcommand{\abs}[1]{\lvert#1\rvert}

\renewcommand{\P}{\mathrm{P}}
\newcommand{\polylog}{\mathrm{polylog}}


\newcommand{\Gnp}{G(n, p)}
\newcommand{\Hknp}{G^{(\ell)}(n, p)}

\newcommand{\dmax}{d_{\text{max}}}
\newcommand{\fo}{\textup{fo}}
\newcommand{\reg}{\textup{reg}}

\newcommand{\binoms}[2]{{\textstyle\binom{#1}{#2}}} 
\renewcommand{\phi}{\varphi}


\renewcommand{\P}{\PP}


\makeatletter
\def\slashedarrowfill@#1#2#3#4#5{%
  $\m@th\thickmuskip0mu\medmuskip\thickmuskip\thinmuskip\thickmuskip
   \relax#5#1\mkern-7mu%
   \cleaders\hbox{$#5\mkern-2mu#2\mkern-2mu$}\hfill
   \mathclap{#3}\mathclap{#2}%
   \cleaders\hbox{$#5\mkern-2mu#2\mkern-2mu$}\hfill
   \mkern-7mu#4$%
}

\def\rightslashedarrowfilla@{%
  \slashedarrowfill@\relbar\relbar{\raisebox{1.2pt}{$\scriptscriptstyle\diagup$}}\rightarrow}
\newcommand\xslashedrightarrowa[2][]{%
  \ext@arrow 0055{\rightslashedarrowfilla@}{#1}{#2}}

\makeatother

\newcommand{\aramcolp}{\xrightarrow[{\text{\raisebox{3pt}[0pt]{{\tiny $prp$}}}}]{\text{{\tiny $a$-$ram$}}}}
\newcommand{\naramcolp}{\xslashedrightarrowa[{\text{\raisebox{3pt}[0pt]{{\tiny $prp$}}}}]{\text{{\tiny $a$-$ram$}}}}
\newcommand{\gamecol}{\xrightarrow{\text{{\tiny $game$}}}}
\newcommand{\anycol}{\xrightarrow{\ *\ }}   
\newcommand{\nanycol}{\xslashedrightarrowa{\ *\ }}

\usepackage[margin=1in]{geometry}
\setlength{\parskip}{5pt}

\usepackage{verbatim}

\begin{document}




\thispagestyle{empty} 
\begin{center}

\LARGE An algorithmic framework for obtaining lower bounds\\
 for random Ramsey problems
\vspace{8mm}

\large{
\begin{tabular}{ccccccc}
Rajko Nenadov$^1$ & \quad & Yury Person$^2$ & \quad & Nemanja \v Skori\'c$^1$ & \quad & Angelika Steger$^1$ \\
{\small{rnenadov@inf.ethz.ch}} & \quad & {\small{person@math.uni-frankfurt.de}} & \quad & {\small{nskoric@inf.ethz.ch}}&\quad & {\small{steger@inf.ethz.ch}} 
\end{tabular}
}
\vspace{5mm}

\large
  $^1$Department of Computer Science \\
  ETH Zurich, 8092 Zurich, Switzerland
\vspace{4mm}

\large
  $^2$Institute of Mathematics\\
  Goethe-Universit\"at, 
  60325 Frankfurt am Main, Germany
\vspace{8mm}

\end{center}

\begin{abstract}
In this paper we introduce a general framework for proving lower bounds for various Ramsey type problems within random settings. The main idea is to view the problem from an algorithmic perspective: we aim at providing an algorithm that finds the desired colouring with high probability. Our framework allows to reduce the probabilistic problem of whether the Ramsey property at hand holds for random (hyper)graphs with edge probability $p$ to a deterministic question of whether there exists a finite graph that forms an obstruction. 

In the second part of the paper we apply this framework to address and solve various open problems. In particular, we extend the result of Bohman, Frieze, Pikhurko and Smyth (2010) for bounded anti-Ramsey problems in random graphs to the case of $2$ colors and to hypergraph cliques. As a corollary, this proves a matching lower bound for the result of Friedgut, R\"odl and Schacht (2010)  and, independently, Conlon and Gowers (2014+) for the classical Ramsey problem for hypergraphs in the case of cliques. Finally, we provide matching lower bounds for a proper-colouring version of anti-Ramsey problems introduced by Kohayakawa, Konstadinidis and Mota~(2014) in the case of cliques and cycles.
\end{abstract}

\section{Introduction and Results} \label{sec:intro}

A hypergraph $G$ is \emph{Ramsey} for a hypergraph $F$ and an integer $r$, if every colouring of the edges of $G$ with $r$ colours contains a copy of $F$ with all its edges having the same colour. A celebrated theorem of Ramsey~\cite{ramsey1930problem}  states that if $G$ is a large enough complete hypergraph then $G$ is Ramsey for $F$ and $r$. A priori it is not clear whether this follows from the density of a complete hypergraph or its rich structure. It was shown only later that actually the latter is the case: there exist sparse graphs with rich enough structure so that they are Ramsey for $F$. For example, a result of Ne\v set\v ril and R\"odl~\cite{nevsetvril1976ramsey} states that for every $k$ there exists a sparse graph $G$ that does not contain a clique of size $k+1$, but that nevertheless is Ramsey for a clique of size $k$. Nowadays, the easiest way to prove such result is by studying Ramsey properties of random (hyper)graphs. 

Over the last decades the study of various Ramsey-type problems for random (hyper)graphs received a lot of attention. In their landmark result, R\"odl and Ruci\'nski~\cite{RR93,RR94,Rodl:1995} gave a precise characterization of all edge probabilities $p = p(n)$ for which Ramsey's theorem holds in 
the random graph $\Gnp$ for a given graph $F$ and $r$ colors. 
 The corresponding problem for hypergraphs remained open for more than 15 years. Only recently, Friedgut, R\"odl and Schacht~\cite{friedgut2010ramsey}  and independently Conlon and Gowers~\cite{conlon2010combinatorial}  obtained an upper bound  analogous to the graph case. However, the question whether there exists a matching lower bound remained open.
 
More recently, other variations on  Ramsey-type problems in random graphs have been 
investigated. These are so-called anti-Ramsey properties such as finding 
rainbow copies of a given graph $F$ in any $r$-bounded colouring of $\Gnp$, initiated by Bohman, Frieze, Pikhurko and Smyth~\cite{Bohman:2010},  and 
in any proper edge-colouring of $\Gnp$, introduced by Kohayakawa, Konstadinidis and Mota~\cite{Kohayakawa:2011,Kohayakawa:2014}. 


The aim of our paper is twofold. First we introduce a general framework for proving lower bounds for Ramsey-type problems for random hypergraphs. Roughly speaking, the framework allows to reduce the probabilistic problem
\begin{center}
\emph{Does the Ramsey property at hand hold for}\\
\emph{random (hyper)graphs with edge probability $p$ w.h.p.?}
\end{center}
to a deterministic question of whether there exists a (hyper)graph that forms an \emph{obstruction}, or more precisely
\begin{center}
\emph{Does there exist a (hyper)graph with density at most $d(F,r)$ on at most $v(F,r)$ vertices }\\
\emph{that does not have the given Ramsey property?}
\end{center}
In the second part of the paper we then apply this framework to various Ramsey-type problems in random (hyper)graphs by 
 providing proofs of lower bounds that match the known upper bounds up to a constant factor.

\subsection{Definitions and Notations}
For background on graph theory we refer the reader to standard text books, see e.g.~\cite{Bollobas_book}. 
In particular, we denote the number of vertices and edges of a graph $G = (V, E)$ with $v(G)$ and $e(G)$, respectively.  For a subset of vertices $V' \subseteq V$, we denote with $G[V']$ the subgraph of $G$ induced by the vertices in $V'$. Furthermore, for a subset of vertices $S \subseteq V$ we use the shorthand $G \setminus S$ to denote the subgraph $G[V \setminus S]$. Similarly, for $E' \subseteq E$ we write $G \setminus E'$ to denote the graph $(V, E \setminus E')$, and by $G[E']$ we mean a graph with the edge set $E'$ on the vertex set $\cup_{e\in E'}e$. Given a graph $G$ and a vertex $v \in V(G)$, we write $N_G(v)$ for the set of neighbours of $v$ in $G$, $\deg_G(v) := |N_G(v)|$ for its degree and $\delta(G) = \min_{v \in V(G)} \deg_G(v)$
 denotes the minimum degree of $G$. If the graph $G$ is clear from the context, we omit it in the subscript. For two graphs $G_1$ and $G_2$, we write $G_1 \cong G_2$ if they are isomorphic.

 An $\ell$-uniform hypergraph $G$, or $\ell$-graph for short,  is a pair $(V,E)$ with the vertex set $V$ and $E\subseteq\binom{V}{\ell}$ the set of (hyper)edges. We will use the same notation as above for hypergraphs. Furthermore, a $k$-set is a set of cardinality $k$.
 
 The classical Ramsey problem is the following. Given two $\ell$-graphs $F$ and $G$ and an integer~$r$, we write
$$G \ramcolk[r] F$$
if every edge colouring of $G$ with $r$ colours contains a monochromatic copy of $F$. Clearly, it is essential to restrict the number of colours. Otherwise, using a different colour for each edge in $G$ trivially avoids any monochromatic copy of $F$. Theorem of Ramsey~\cite{ramsey1930problem} states that for every $\ell$ and $r$ and 
every $\ell$-graph $F$ 
we have, for a large enough $n$, that
\[
K^{(\ell)}_n \ramcolk[r] F,
\]
where $K^{(\ell)}_n$ denotes the complete $\ell$-graph $\left([n],\binom{[n]}{\ell}\right)$ on $n$ vertices.

It is natural to study analogues of Ramsey's theorem in the random setting. More precisely, we consider a binomial random $\ell$-uniform hypergraph $\Hknp$ on $n$ vertices in which every subset of size $\ell$ forms an edge with probability $p$ independently. In the case $\ell=2$ (the graph case) we use $\Gnp$ instead of $G^{(2)}(n,p)$. Given a (hyper)graph property $\mathcal{P}$, we say that a function $p_0 = p_0(n)$ is a threshold for $\mathcal{P}$ if 
$$
\lim_{n \rightarrow \infty} \Pr[\Hknp \in \mathcal{P}] = \begin{cases}
1 \quad \text{if} &p \gg p_0(n),\\
0 \quad \text{if} &p \ll p_0(n).
\end{cases}
$$
We say that an event $\mathcal{E}$ holds with high probability (w.h.p.\ for short) if $\lim_{n \rightarrow \infty} \Pr[\mathcal{E}] = 1$. It is easy to see that the Ramsey problem  induces a monotone property and  it follows from the result of Bollob{\'a}s and Thomason~\cite{bollobas1987threshold} that there has to exist some threshold $p_0(n)$. 

In this paper we will study $0$-statements of the above Ramsey-type problem and its variations for random $\ell$-graphs. Before giving an account on the previous and our results let us provide an intuition where the threshold for various Ramsey properties may be located (for most graphs $F$). Observe that the expected number of copies of $F$ in $\Hknp$ has the order of $n^{v(F)}p^{e(F)}$, where by $v(F)$ and $e(F)$ we denote the number of vertices and edges of $F$, respectively. On the other hand, the expected number of edges of $\Hknp$ is in the order of $n^\ell p$. That is, if $n^{v(F)}p^{e(F)} \ll n^\ell p$ then we expect the copies of $F$ to be loosely scattered -- and finding a colouring that avoids the desired copy of $F$ should be an easy task. Similarly, if $n^{v(F)}p^{e(F)} \gg n^\ell p$ we expect that the copies of $F$ overlap so heavily that any colouring should contain the desired copy $F$. 

Actually, the same argument holds for any subgraph of $F$ and this thus motivates the definition of the so-called \emph{$\ell$-density} that we now give. For an $\ell$-graph  $G=(V,E)$ on at least $\ell + 1$ vertices, we set $d_\ell(G) := (e(G) - 1)/(v(G)-\ell)$ and denote by $m_\ell(G)$ the maximum $\ell$-density of any subgraph of $G$, 
$m_\ell(G) = \max_{{J \subseteq G, v(J) \geq \ell + 1}} d_\ell(J)$.
If $m_\ell(G) = d_\ell(G)$, we say that $G$ is \emph{$\ell$-balanced}, and if in addition $m_\ell(G) > d_\ell(J)$ for every subgraph $J \subsetneq G$ with $v(J) \geq \ell + 1$, we say that $G$ is \emph{strictly $\ell$-balanced}. Another related notion which will be used extensively throughout the paper is the \emph{density} of an $\ell$-graph defined as $d(G) = e(G) / v(G)$. Similarly, we denote with $m(G)$ the maximum density over all subgraphs of $G$, i.e.
$m(G) = \max_{J \subseteq G} d(J)$.

\subsection{Results -- old and new}
\subsubsection{Ramsey's theorem for random $\ell$-graphs}
The systematic study of Ramsey properties of random graphs was initiated by 
\L{}uczak, Ruci\'nski and Voigt~\cite{LRV92} in the early nineties. Shortly thereafter  
 R\"odl and Ruci\'nski determined the threshold function 
of the graph Ramsey property  
for all graphs $F$. Below we state their result for all but a very special class of acyclic graphs. 

\begin{theorem}[\cite{RR93,RR94,Rodl:1995}] \label{thm:rr}
Let  $H$ be a graph that is not a forest of stars and, if $r = 2$, paths of length 3. Then there exist constants $c, C > 0$ such that
  \begin{equation*}
     \lim_{n\to \infty}\Pr[\Gnp \ramcolk[r] H] =
       \begin{cases}           
         1,&\text{if \(p \geq Cn^{-1/m_2(H)}\)}, \\
         0,&\text{if \(p \leq cn^{-1/m_2(H)}\)}.
       \end{cases}
  \end{equation*}
\end{theorem}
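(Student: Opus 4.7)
The plan is to prove the two directions separately. The $1$-statement (upper bound on the threshold) would follow from the sparse regularity method: for $p \geq C n^{-1/m_2(H)}$, every $r$-edge-colouring of $\Gnp$ has some colour class which, after an application of a sparse regularity lemma together with an embedding/counting statement for sparse regular tuples, must contain a copy of $H$. Alternatively, this direction can nowadays be derived from the transference or hypergraph container machinery. This is the easier half and is essentially independent of the framework this paper develops.

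For the $0$-statement, which is the focus of the framework described in this paper, I would apply the reduction informally presented in the introduction: the probabilistic question ``is $\Gnp$ typically not Ramsey for $H$ with $r$ colours?'' is reduced to the deterministic question \emph{does there exist a finite graph $G$ with $m(G) \leq m_2(H)$ that is itself Ramsey for $H$ with $r$ colours?} The theorem then follows once we rule out such an obstruction, under the stated hypothesis on $H$.

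To rule out the obstruction, I would construct an $r$-edge-colouring of any such $G$ avoiding a monochromatic $H$ by induction on $e(G)$. The structural core of the argument is: in any $G$ with $m(G) \leq m_2(H)$ that contains a copy of $H$, one can locate an edge $e$ such that (i) $m(G \setminus \{e\}) \leq m_2(H)$, and (ii) $e$ lies in only boundedly many copies of $H$ in $G$. Colouring $G \setminus \{e\}$ by induction and then choosing, among the $r$ available colours for $e$, one that does not complete a monochromatic $H$ through $e$, produces the desired colouring. Equivalently, one orders the edges $e_1, \ldots, e_m$ so that for each $i$ the number of copies of $H$ using $e_i$ and only earlier edges is at most $r-1$, and colours greedily in this order. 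The existence of such an edge / ordering exploits the strict $2$-balancedness of the densest subgraph $J \subseteq H$: either $G$ has a low-degree vertex yielding a pendant-style edge, or one peels an edge out of a densest piece while preserving the density bound.

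The main obstacle is precisely the excluded cases. For a forest of stars (and for $P_4$ when $r=2$), the densest subgraph of $H$ is too thin: each new edge can easily sit in too many copies of $H$, and explicit small obstruction graphs exist. Verifying that outside these exceptions the counting of copies through a well-chosen edge always leaves a safe colour -- uniformly over all $G$ with $m(G) \leq m_2(H)$ -- is the technical heart of the proof, and is where the hypotheses on $H$ and $r$ enter in a nontrivial way.
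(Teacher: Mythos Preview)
The paper does not prove this theorem. Theorem~\ref{thm:rr} is stated with a citation to R\"odl and Ruci\'nski \cite{RR93,RR94,Rodl:1995} and no proof is given anywhere in the paper; it serves purely as background. The paper's framework is applied only to the anti-Ramsey problems (Theorems~\ref{thm:2_bnd} and~\ref{thm:ar_proper}) and to hypergraph cliques (Theorem~\ref{thm:ramsey_cliques}), not to recover the general graph Ramsey $0$-statement.

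As a side remark, your sketch of the $0$-statement does not match how the paper's framework would be used even if one did try to reprove Theorem~\ref{thm:rr} with it. The paper's reduction is not ``find an edge in at most $r-1$ copies of $H$ and colour greedily''; rather, one first strips $F$-equivalent edge pairs and edges in no $F$-copy (as in Algorithm~\ref{algo:ar_2bnd}), argues that what remains is $F$-closed, decomposes it into $F$-blocks via Lemma~\ref{lemma:block_split}, and then invokes Theorem~\ref{lemma:main}/Corollary~\ref{lemma:main_density} to bound $m(B)\le m_2(H)$ for each block. The deterministic step is then to show that every graph $G$ with $m(G)\le m_2(H)$ satisfies $G\nramcolk[r] H$, which is indeed the content of~\eqref{eq:deterministic}, but the paper never carries this out for arbitrary $H$. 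Your greedy edge-ordering idea is closer in spirit to the original R\"odl--Ruci\'nski argument than to anything in this paper, and as stated it is too loose: an edge lying in boundedly many copies of $H$ is not enough---you need that at most $r-1$ of those copies are already monochromatic in the partial colouring, which requires a more careful structural argument than what you outlined.
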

In the case when $F$ is a triangle, Friedgut, R\"odl, Ruci\'nski and Tetali~\cite{friedgut_sharp} have strengthened Theorem \ref{thm:rr}  by showing that there exists a sharp threshold.
 
 Extending Theorem \ref{thm:rr} to hypergraphs, R\"odl and Ruci\'nski~\cite{rodl1998ramsey} proved that for the $3$-uniform clique on $4$ vertices and $2$ colours the $1$-statement is determined by the $3$-density, as one would expect. They also conjectured that, similarly to the graph case, the threshold should be determined  by the $\ell$-density  for ``most'' of the  $\ell$-graphs $F$. R\"odl, Ruci\'nski and Schacht~\cite{rodl2007ramsey} later showed that the $1$-statement actually holds for all $\ell$-partite $\ell$-graphs. In full generality the $1$-statement was  resolved only recently by Friedgut, R\"{o}dl and Schacht~\cite{friedgut2010ramsey} and independently by Conlon and Gowers~\cite{conlon2010combinatorial}. 
\begin{theorem}[\cite{friedgut2010ramsey,conlon2010combinatorial}]\label{thm:hypergraph-1-statement}
  Let $F$ be an $\ell$-graph with maximum degree at least 2 and let $r \geq 2$. Then there exists a constant $C > 0$ such that for $p \geq C n^{-1/m_\ell(F)}$ we have
  \begin{equation*}
    \lim_{n\to \infty} \P[\Hknp \ramcolk[r] F] = 1.
  \end{equation*}
\end{theorem}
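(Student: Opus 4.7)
The plan is to establish this one-statement via a transference/container argument, in three stages: reduce the Ramsey coloring problem to a Tur\'an-type density problem in the random hypergraph, then use a hypergraph container lemma to reduce that to a dense counting problem, and finally invoke supersaturation in dense hypergraphs.

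First, I would observe the standard pigeonhole reduction: for any $r$-coloring of the edges of $G = \Hknp$, some color class $H \subseteq G$ satisfies $e(H) \ge e(G)/r$. Hence, writing $\alpha = 1/r$, it suffices to show that w.h.p.\ every subhypergraph $H \subseteq G$ with $e(H) \ge \alpha\, e(G)$ contains a copy of $F$. In other words, one reduces to a Tur\'an-type statement for $F$-free subgraphs of $\Hknp$ with \emph{relative density} at least $\alpha$.

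For the second stage I would apply the hypergraph container method (Balogh--Morris--Samotij / Saxton--Thomason). The goal is to build a family $\mathcal{C}$ of $\ell$-graphs on $[n]$ such that (i) every $F$-free $\ell$-graph on $[n]$ is a subgraph of some $C \in \mathcal{C}$, (ii) $|\mathcal{C}| \le 2^{s\, n^{\ell - 1/m_\ell(F)}}$ for some constant $s = s(F,r)$, and (iii) every $C \in \mathcal{C}$ satisfies $e(C) \le (1-\delta)\binom{n}{\ell}$ for some $\delta = \delta(F,r) > 0$. Assuming such $\mathcal{C}$ exists, one concludes by a standard first-moment calculation: the event that there is an $F$-free $H \subseteq G$ with $e(H) \ge \alpha\, e(G)$ would force $G$ to have at least $\alpha\,e(G)$ edges inside some container of density at most $1-\delta$; by Chernoff this has probability $\exp(-\Omega(n^\ell p))$, and a union bound over $\mathcal{C}$ is affordable precisely when $n^\ell p \gg n^{\ell - 1/m_\ell(F)}$, i.e.\ when $p \ge C n^{-1/m_\ell(F)}$ with $C$ large.

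For the third stage -- and this is the main obstacle -- one has to justify the container construction. The Saxton--Thomason framework turns clause (iii) into a \emph{supersaturation} requirement: every $\ell$-graph on $[n]$ with density at least $1-\delta/2$ contains at least $\Omega(n^{v(F)})$ copies of $F$ that are ``spread out'' with respect to an appropriate co-degree function tuned to $m_\ell(F)$. For graphs ($\ell=2$) this is immediate from K\H{o}v\'ari--S\'os--Tur\'an-type supersaturation, but for $\ell \ge 3$ this is genuinely hard and is where the hypergraph regularity method of Gowers and Nagle--R\"odl--Schacht--Skokan has to enter, together with its counting lemma; quantitatively extracting the container-ready supersaturation statement is the technical heart of the Friedgut--R\"odl--Schacht and Conlon--Gowers proofs. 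Once this ingredient is in place, the two preceding stages combine to yield the theorem with constant $C$ depending only on $F$ and $r$.
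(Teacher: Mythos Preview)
The paper does not prove this theorem at all; it is quoted from \cite{friedgut2010ramsey,conlon2010combinatorial} as background, and the paper's own contribution concerns only the $0$-statements.

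As for your proposal itself, the pigeonhole reduction in Stage~1 is a genuine gap that breaks the whole argument. You reduce to showing that every $H\subseteq G=\Hknp$ with $e(H)\ge \tfrac{1}{r}e(G)$ contains a copy of $F$, and then plan to rule this out by placing $H$ inside a container $C$ with $e(C)\le(1-\delta)\binom{n}{\ell}$. But the Chernoff step you invoke only yields a contradiction when $\tfrac{1}{r} > 1-\delta$; otherwise the container \emph{typically} contains about $(1-\delta)e(G) > \tfrac{1}{r}e(G)$ edges of $G$, and there is nothing unlikely about $H$ sitting inside it. The container lemma only produces $\delta$ with $1-\delta$ roughly equal to the Tur\'an density $\pi(F)$, and for most $\ell$-graphs (already for $F=K_4$, $\ell=2$, $r=2$, where $\pi(K_4)=2/3>1/2$; or $F=K_4^{(3)}$, $r=2$, where $\pi(F)\ge 5/9$) one has $\pi(F)>1/r$. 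So the reduction ``Ramsey $\Rightarrow$ Tur\'an in the random host'' simply fails in general.

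The container route to the $1$-statement does exist, but it is organised differently: one does \emph{not} pigeonhole. Instead one associates to a hypothetical valid $r$-colouring an $r$-tuple of containers $(C_1,\dots,C_r)$ with $E(G)\subseteq C_1\cup\dots\cup C_r$, each $C_i$ carrying only $o(n^{v(F)})$ copies of $F$. A robust form of Ramsey's theorem (via the hypergraph removal lemma: make each $C_i$ genuinely $F$-free by deleting $o(n^\ell)$ edges, then apply Ramsey) gives that $C_1\cup\dots\cup C_r$ misses $\Omega(n^\ell)$ edges of $K_n^{(\ell)}$; Chernoff then shows $G\not\subseteq C_1\cup\dots\cup C_r$ w.h.p., and the union bound over $r$-tuples of containers is affordable for $p\ge Cn^{-1/m_\ell(F)}$. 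The original proofs in \cite{friedgut2010ramsey,conlon2010combinatorial} proceed instead by a direct transference principle rather than containers, but in either case the Ramsey property of the dense host is used as such and is not replaced by a Tur\'an statement.
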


Recall, that  $K_k^{(\ell)}$ denotes a complete $\ell$-graph on $k$ vertices.
In this paper we make progress towards providing the missing lower bounds by resolving the case of cliques. 

\begin{theorem} \label{thm:ramsey_cliques}
Let $k, \ell$ be such that $2 \le \ell < k$ and let $r \ge 2$. Then there exist constants $c, C > 0$ such that
  \begin{equation*}
     \lim_{n\to \infty}\Pr[\Hknp \ramcolk[r] K_k^{(\ell)}] =
       \begin{cases}           
         1,&\text{if \(p \geq Cn^{-1/m_\ell(K_k^{(\ell)})}\)}, \\
         0,&\text{if \(p \leq cn^{-1/m_\ell(K_k^{(\ell)})}\)}.
       \end{cases}
  \end{equation*}
\end{theorem}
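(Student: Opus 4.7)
The $1$-statement is immediate from Theorem~\ref{thm:hypergraph-1-statement}. The plan for the $0$-statement is to invoke the algorithmic framework developed in the first part of the paper, which reduces the probabilistic lower bound to a purely deterministic colouring question: it suffices to show that every $\ell$-graph $G$ with $m(G) \le m_\ell(K_k^{(\ell)})$ admits an $r$-edge-colouring without a monochromatic copy of $K_k^{(\ell)}$. Since restricting to two of the $r$ available colours can only make avoiding monochromatic cliques harder, it is enough to establish this deterministic claim for $r = 2$, and the argument splits depending on the size of $k$.

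For the main range $k \ge \ell + 2$ the plan is to piggyback on the $2$-bounded anti-Ramsey statement proved elsewhere in the paper. A parallel application of the framework to the bounded anti-Ramsey setting yields that every such $G$ admits a $2$-bounded edge-colouring $c$ with no rainbow $K_k^{(\ell)}$. Given $c$, I would construct a $2$-colouring of $G$ via \emph{pair-splitting}: for each colour class of $c$ of size two, say $\{e, e'\}$, assign $e$ and $e'$ opposite colours; colour singleton classes arbitrarily. Any copy $K$ of $K_k^{(\ell)}$ is non-rainbow under $c$, so two of its edges share a $c$-colour; since $c$ is $2$-bounded, those two edges form an entire colour class, hence one of the split pairs, and therefore $K$ receives both colours and is not monochromatic. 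This delivers the deterministic claim for $k \ge \ell + 2$, and then the framework hands back the $0$-statement of the theorem.

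The boundary case $k = \ell + 1$ has to be treated separately, since $m_\ell(K_{\ell+1}^{(\ell)}) = \ell$ and the anti-Ramsey reduction is no longer tight at this density. Here I would verify the deterministic colouring claim directly, by induction on $v(G) + e(G)$: exploiting $m(G) \le \ell$, locate either a vertex of small "$\ell$-degree" in an appropriate link structure, or an edge that belongs to only a bounded number of potential copies of $K_{\ell+1}^{(\ell)}$; remove it; apply the inductive hypothesis to the strictly smaller graph, which still satisfies the density bound; and extend the colouring back to the deleted element by choosing a colour that does not complete any of the few constraining copies. The main obstacle is precisely this last case: identifying the right strippable substructure and showing that the number of extension constraints is always small enough to leave a safe colour, because this step is not packaged by the anti-Ramsey detour and has to be carried out by hand for the smallest non-trivial clique.
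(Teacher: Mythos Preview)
Your overall strategy --- deduce the $0$-statement from the $2$-bounded anti-Ramsey result by splitting each colour class into a red and a blue edge --- is exactly what the paper does. The pair-splitting observation (a monochromatic copy under the $2$-colouring would be rainbow under the original $2$-bounded colouring) is the paper's argument verbatim.

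Where you diverge is the case split. You set aside \emph{all} of $k=\ell+1$ as the hard case, claiming the anti-Ramsey reduction is ``no longer tight'' there. This is not correct: Lemma~\ref{lemma:nbounded_hyper} (and hence Theorem~\ref{thm:2_bnd}) covers $K_{\ell+1}^{(\ell)}$ for every $\ell\ge 4$, and Lemma~\ref{lemma:nbounded_general} covers $K_k$ for $k\ge 4$. The anti-Ramsey detour therefore already delivers the deterministic claim for the entire range $k=\ell+1$ with only two exceptions, $K_3$ and $K_4^{(3)}$. Your proposed strip-and-extend induction is unnecessary for $\ell\ge 4$, and as you admit, you have not actually carried it out.

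For the two genuine exceptions the paper proceeds differently from your sketch. The case $K_3$ is simply Theorem~\ref{thm:rr}. For $K_4^{(3)}$ the paper does not strip edges; instead it runs the same block-reduction algorithm and then verifies the deterministic statement $m(G)\le 3 \Rightarrow G\nramcolk[2] K_4^{(3)}$ via a link argument: a minimum-degree vertex $u$ has $\deg(u)\le 3m(G)\le 9$, so its link $G_u$ is a graph with at most $9$ edges, yet by Claim~\ref{obs:link_col} any vertex-minimal counterexample would require $G_u\ramcolk[2] K_3$, which is impossible since the smallest $K_3$-Ramsey graph has $15$ edges. This is cleaner and more concrete than the inductive stripping you outline.
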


We will deduce Theorem~\ref{thm:ramsey_cliques} as a straightforward corollary from the results in the next subsection. We note that~\cite{PhDhenning} also contains a proof of Theorem~\ref{thm:ramsey_cliques} by different means.

\subsubsection{Anti-Ramsey property for $r$-bounded colourings}
If we allow colourings with an unbounded number of colours we arrive at the so-called anti-Ramsey problem where we are interested in finding a rainbow copy of $F$, i.e., a copy of $F$ in which each edge uses a different colour. 
Again, to avoid trivialities one needs to forbid colourings with too few colours. This has been done in several different ways. Here we insist that each colour is used at most $r$ times (we call this an $r$-bounded colouring).  
We write
$$G \aramcolk[r] F$$
if every $r$-bounded edge colouring of $G$ contains a rainbow copy of $F$.

Lefmann, R\"odl and Wysocka \cite{Lefmann:1996} considered the following question. Given a complete graph $G$ with edges colored using an $r$-bounded coloring, what is the largest $\ell$ such that $G$ contains a rainbow copy of $K_\ell$.
Bohman, Frieze, Pikhurko and Smyth~\cite{Bohman:2010} initiated the study of a similar question in $\Gnp$. The authors proved that given a graph $F$ and a constant $r \ge r(F)$, the threshold for the property of being $r$-bounded anti-Ramsey matches the intuition.

\begin{theorem}[\cite{Bohman:2010}] \label{thm:bohman_ar}
Let $F$ be a graph which contains a cycle. Then there exists a constant $r_0 = r_0(F)$ such that for each $r \ge r_0(F)$ there exist constants $c, C > 0$ and
\begin{equation*}
     \lim_{n\to \infty}\Pr[\Gnp \aramcolk[r] F] =
       \begin{cases}           
         1,&\text{if \(p \geq Cn^{-1/m_2(F)}\)}, \\
         0,&\text{if \(p \leq cn^{-1/m_2(F)}\)}.
       \end{cases}
  \end{equation*}
\end{theorem}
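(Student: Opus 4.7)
The plan is to establish the two directions separately.

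For the $1$-statement ($p \ge Cn^{-1/m_2(F)}$), I would use a first-moment deletion argument. With high probability $\Gnp$ contains $\Theta(n^{v(F)}p^{e(F)})$ copies of $F$. Fix any $r$-bounded colouring of $\Gnp$; each colour class contributes at most $\binom{r}{2}$ same-colour edge pairs, so there are $O(r^2 n^2 p)$ such pairs overall. Any non-rainbow copy of $F$ contains such a pair, and for any fixed pair $(e_1,e_2)$ of edges on $k\in\{3,4\}$ vertices the number of copies of $F$ through $e_1\cup e_2$ is at most $O(n^{v(F)-k}p^{e(F)-2})$. Summing and comparing with the total count shows that the non-rainbow copies make up an $o(1)$-fraction of all copies as soon as $np\to\infty$, which is guaranteed by $m_2(F)>1$ (since $F$ contains a cycle). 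Hence a rainbow copy must survive, so the colouring was not rainbow-$F$-free after all.

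For the $0$-statement ($p \le cn^{-1/m_2(F)}$), I would take an algorithmic approach: explicitly construct an $r$-bounded colouring of $\Gnp$ that avoids rainbow copies of $F$, and verify its correctness w.h.p. The first ingredient is a standard sparsity fact: for $c$ small, w.h.p.\ every subgraph of $\Gnp$ on at most $L = L(F,r)$ vertices has $2$-density at most $m_2(F)$. Next, I would run a stripping procedure that iteratively removes edges lying in only a few ``near-rainbow copies'' of $F$ and sets them aside. Using the sparsity guarantee, the residual core decomposes into pairwise-disjoint dense blocks of bounded size; since each block contains only a bounded number of copies of $F$, an $r$-bounded rainbow-$F$-free colouring of each block can be produced by brute force, provided $r \ge r_0(F)$ for some constant $r_0(F)$ that handles the worst block. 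Finally, I would colour the deferred edges in reverse order of removal, at each step choosing a colour that (i) does not complete a rainbow copy of $F$ through the current edge, and (ii) respects the $r$-bound. Both constraints forbid only a bounded number of colours, so for $r$ large enough a legal choice always exists.

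The hard part will be the $0$-statement: proving that the stripping procedure terminates with residual blocks of bounded size, and quantifying the constant $r_0(F)$. Morally, the probabilistic problem reduces to the deterministic question of whether every finite graph of density at most $m_2(F)$ admits an $r$-bounded rainbow-$F$-free colouring, which is precisely the kind of ``obstruction-freeness'' that the framework introduced in this paper is designed to isolate. Establishing this reduction rigorously and verifying the deterministic statement for $r$ sufficiently large (depending on $F$) forms the technical core of the argument, and explains why the dependence $r \ge r_0(F)$ appears in the statement of the theorem.
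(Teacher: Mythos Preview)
This theorem is not proved in the present paper: it is quoted from Bohman, Frieze, Pikhurko and Smyth~\cite{Bohman:2010} as background, and the paper gives no argument for it. There is thus no ``paper's own proof'' to compare your proposal against. What the paper \emph{does} prove is a strengthening of the $0$-statement (Theorem~\ref{thm:2_bnd}), showing that $r=2$ already suffices for most $F$.

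Your $0$-statement sketch is, in spirit, close to the framework the paper builds for that stronger result. The paper's Algorithm~\ref{algo:ar_2bnd} strips pairs of $F$-equivalent edges (colouring each pair identically) and edges lying in no $F$-copy; the residual graph is then $F$-closed, partitions into $F$-blocks by Lemma~\ref{lemma:block_split}, and Theorem~\ref{lemma:main} together with Corollary~\ref{lemma:main_density} guarantees that w.h.p.\ every block has bounded size and $m(B)\le m_2(F)$. The deterministic core (Lemmas~\ref{lemma:nbounded_general}--\ref{lemma:nbounded_hyper}) then shows each such block admits a $2$-bounded rainbow-$F$-free colouring. Your ``brute force on bounded blocks'' is a coarser surrogate for those lemmas and would indeed yield some $r_0(F)$; the paper works harder to push $r_0$ down to $2$. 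One point where your outline is looser: your stripping rule (``few near-rainbow copies'') and your reverse-order extension step are not obviously correct as stated, whereas the paper's use of $F$-equivalence makes the extension immediate---an edge removed together with an $F$-equivalent partner can never lie in a rainbow copy once both get the same colour.

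Your $1$-statement sketch, on the other hand, has a genuine gap. The colouring is adversarial, chosen after $\Gnp$ is revealed, so an expected-value comparison is not enough; you must show that w.h.p.\ \emph{every} $r$-bounded colouring leaves a rainbow copy. Your bound ``$O(n^{v(F)-k}p^{e(F)-2})$ copies through a fixed pair'' is only an expectation, and for many $F$ (already $F=K_4$) this expectation is $o(1)$, so no per-pair concentration is available. The adversary can then place same-colour pairs precisely on the exceptional pairs that lie in many copies. Repairing this requires either a uniform upper bound on extension counts or a more global argument; it is doable but not what you wrote, and in any case falls outside the scope of this paper.
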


It is easy to see that for the case $F = K_3$ and $2$-bounded colourings there exists an obstruction, namely the complete graph on $4$ vertices. We refer the reader to \cite{Bohman:2010} for details regarding the results in the case $F = K_3$. For other graphs $F$ it is not obvious whether the restriction on $r$ is really needed. Indeed, the following theorem strengthens the 0-statement of Theorem \ref{thm:bohman_ar} by showing that $r=2$ actually suffices for most cases. In part $(iii)$ we also provide an extension to hypergraphs in the case of cliques.
 
\begin{theorem}\label{thm:2_bnd}
Let $\ell \ge 2$ and $F$ be an $\ell$-graph. Let $F'  \subseteq F$ be a strictly $\ell$-balanced subgraph such that $m_\ell(F') = m_\ell(F)$ . Then there exists a constant $c > 0$ such that $G \sim \Hknp$ w.h.p.\ satisfies $G \naramcolk[2] F$ if one of the following holds,
\begin{enumerate}[(i)]
\item $\ell = 2$, $F'$ contains a cycle, $F' \ncong \{K_3, C_4\}$ and $p \le cn^{-1/m_2(F)}$, or
\item $\ell=2$, $F' \cong C_4$ and $p \ll n^{-1/m_2(C_4)}$, or
\item $\ell \ge 3$, $r \ge \ell + 1$ and $(\ell, r) \neq (3, 4)$, $F' \cong K_r^{(\ell)}$ and $p \le cn^{-1/m_\ell(K_r^{(\ell)})}$.
\end{enumerate}
\end{theorem}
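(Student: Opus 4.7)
The plan is to derive Theorem~\ref{thm:2_bnd} as a direct application of the algorithmic framework introduced earlier in the paper. Concretely, one invokes the framework in the form "if no small obstruction exists, then the $0$-statement holds": there is a constant $L = L(F)$ such that, if every $\ell$-graph $H$ with $v(H) \le L$ and $m(H) \le m_\ell(F)$ admits a $2$-bounded edge-colouring with no rainbow copy of $F$, then $\Hknp \naramcolk[2] F$ w.h.p.\ for $p \le c\, n^{-1/m_\ell(F)}$ and a suitable constant $c>0$. For part (ii) the same reduction applies with the density condition replaced by the strict inequality $m(H) < m_\ell(F)$, which corresponds to the weaker $p \ll n^{-1/m_\ell(F)}$ regime and accommodates the existence of an obstruction exactly at the threshold density. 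Thus the entire proof reduces to three purely deterministic statements about small hypergraphs, one for each of (i), (ii), (iii).

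For the generic cases (i) and (iii), I would construct the required colouring of $H$ by a greedy peeling procedure. Process the edges of $H$ one at a time in a carefully chosen order (roughly, reverse degeneracy with respect to the auxiliary hypergraph whose vertices are the edges of $H$ and whose hyperedges are the copies of $F'$), maintaining the invariants that no colour has been used more than twice and no copy of $F$ has yet been made rainbow. When an edge $e$ is processed and lies in no "newly completable" copy of $F'$, give it a fresh colour; otherwise assign it the same colour as a carefully chosen partner edge in a copy of $F'$ it would complete, so that this copy becomes non-rainbow. The crucial structural input is that $F'$ is strictly $\ell$-balanced: this forces any two distinct copies of $F'$ in $H$ to meet in at most $\ell$ vertices (otherwise their intersection would be a proper subgraph of $F'$ with $\ell$-density exceeding $m_\ell(F')$, contradicting strictness). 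Combined with $m(H) \le m_\ell(F)$, this confines the hypergraph of $F'$-copies inside $H$ to something close to a linear hypergraph, which is what allows the greedy pairing to terminate without ever having to break the $2$-boundedness.

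The exceptional cases in the statement correspond to genuine obstructions, which is why they must be excluded. For $F' = K_3$ the graph $K_4$ is an obstruction, since every $2$-bounded colouring of $K_4$ (six edges, hence at least three colours) contains a rainbow triangle. For $F' = C_4$ there exist obstructions of density \emph{equal to} $m_2(C_4) = 3/2$ but none of strictly smaller density; this is precisely why only the weaker $p \ll n^{-1/m_2(C_4)}$ conclusion can be obtained through the strict-density version of the framework. Finally, for $(\ell,r) = (3,4)$ one finds a small $3$-graph (arising for instance inside $K_5^{(3)}$) which is $2$-bounded anti-Ramsey for $K_4^{(3)}$ yet has ordinary density at most $m_3(K_4^{(3)}) = 3$, so the framework cannot exclude it.

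The main obstacle will be Step 2: verifying that the greedy pairing procedure, run on an arbitrary small $H$ satisfying only the density bound $m(H) \le m_\ell(F)$, never gets stuck in a configuration where every potential partner edge would itself complete another rainbow copy of $F$, or where every "cheap" colour has already been used twice. This requires a delicate choice of the edge-processing order together with a tight use of strict $\ell$-balance of $F'$ to control the local overlap pattern of copies of $F'$ and to show that a peelable edge always exists. For part (iii) one additionally exploits the high symmetry of $K_r^{(\ell)}$ to verify that in every small host hypergraph of density at most $m_\ell(K_r^{(\ell)})$ the copies of $K_r^{(\ell)}$ can be simultaneously spoiled using only $2$-bounded colours, and this case analysis is where the exclusion $(\ell,r) \neq (3,4)$ becomes genuinely necessary.
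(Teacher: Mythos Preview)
Your reduction step is right and matches the paper: Algorithm~\ref{algo:ar_2bnd} strips $F$-equivalent edge pairs and edges in no $F$-copy, Corollary~\ref{lemma:main_density} then guarantees each remaining $F$-block $B$ satisfies $m(B)\le m_\ell(F)$ (strict for $p\ll n^{-1/m_\ell(F)}$), and the problem reduces to the purely deterministic statements of Lemmas~\ref{lemma:nbounded_general}--\ref{lemma:nbounded_hyper}. You also correctly identify why $K_3$, $C_4$ and $K_4^{(3)}$ are excluded (though the $K_4^{(3)}$ obstruction is two copies of $K_5^{(3)}$ sharing four vertices, not a subgraph of a single $K_5^{(3)}$; see Claim~\ref{obs:except_2}).

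The gap is in the deterministic part. First, your key structural claim is false: strict $\ell$-balance of $F'$ does \emph{not} force two distinct copies to meet in at most $\ell$ vertices. Their intersection is a proper subgraph of $F'$ with $\ell$-density \emph{below} $m_\ell(F')$, not above; for instance two copies of $K_4$ can share a triangle. So the ``near-linear hypergraph of copies'' picture on which your greedy pairing relies simply does not hold. Second, and more seriously, the greedy edge-processing scheme you sketch is not how the paper proves the deterministic lemmas, and you give no argument that it terminates. The paper's proofs are quite different and substantially more delicate: for Lemma~\ref{lemma:nbounded_general} one takes a vertex-minimal counterexample and runs a case split on $m_2(F)=k+x$, invoking either a minimum-degree argument (Claim~\ref{claim:min_degree_col}), an adjacent-low-degree argument (Claim~\ref{claim:2_deg_adj}), or an edge-orientation pairing argument (Claim~\ref{claim:orientation}), with $K_4$ handled by hand (Lemma~\ref{lemma:k4_2bnd}). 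For Lemma~\ref{lemma:nbounded_hyper} the argument is an induction on $\ell$ via the link hypergraph $G_u$ of a minimum-degree vertex (Claim~\ref{obs:link_col}), with the base case $K_5^{(4)}$ requiring a separate combinatorial analysis (Lemma~\ref{lem:four_critical}). None of this is captured by a single greedy edge-colouring procedure; the deterministic statements need these ad hoc arguments, and your proposal does not supply them.
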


 
As an interesting corollary of Theorem \ref{thm:2_bnd}, we briefly mention the question of Maker-Breaker $F$-games on random (hyper)graphs. 
We write
$$G \gamecol F$$
if in the following game Maker has a winning strategy: two players,
Maker and Breaker, alternately claim unclaimed edges of $G$ until all the edges are claimed. Maker wins if he claims all the edges of some copy of $F$; otherwise Breaker wins. (For the sake of definiteness we assume that Maker has the first move.)


It is easy to see that the property of not being $2$-bounded anti-Ramsey for $F$ is stronger than being a Breaker's win in the Maker-Breaker $F$-game. Indeed, assume that a hypergraph $G$ is such that $G \naramcolk[2] F$. Then  Breaker can apply the following strategy: fix some $2$-bounded colouring of $G$ without a rainbow copy of $F$ and whenever Maker claims an edge, claim the other edge with the same colour. Then  Maker's graph corresponds to a rainbow subgraph of $G$ and thus does not contain an $F$-copy. Therefore, Theorem \ref{thm:2_bnd} slightly extends the result of Nenadov, Steger and Stojakovi\'c \cite{NenadovSteger:2014} by also providing a lower bound in the case of hypergraph cliques. 

\subsubsection{Anti-Ramsey property for proper edge colourings}
We write
$$G \aramcolp F$$
if every proper edge colouring of $G$ contains a rainbow copy of $F$. 

The first result on the relation between random graphs and the proper-colouring version of the anti-Ramsey property comes from the following question raised by Spencer: is it true that for every $g$ there exists a graph $G$ with girth at least $g$ such that $G \aramcolp C_\ell$ for some $\ell$. The question was answered in positive by R\"odl and Tuza~\cite{rodl1992rainbow}. They proved that for every $\ell$ there exists some sufficiently small $p = p(n)$ such that w.h.p.\ $\Gnp \aramcolp C_\ell$. Only much later, Kohayakawa, Kostadinidis and Mota~\cite{Kohayakawa:2011, Kohayakawa:2014} started a systematic study of this property in the random settings. In particular, they proved that the upper bound is as expected.

\begin{theorem}[\cite{Kohayakawa:2014}]
 Let $F$ be a graph. Then there exists a constant $C > 0$ such that for $p \ge Cn^{-1/m_2(F)}$ we have
 $$ \lim_{n \rightarrow \infty} \Pr[\Gnp \aramcolp F] = 1. $$
\end{theorem}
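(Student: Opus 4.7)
The plan is to establish the $1$-statement via the hypergraph container method of Balogh--Morris--Samotij and Saxton--Thomason, in the spirit of the $1$-statement proofs for the classical Ramsey theorem (Theorems~\ref{thm:rr} and~\ref{thm:hypergraph-1-statement}).

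First I would make a structural observation. If a proper edge-colouring $\chi$ of $G \sim G(n,p)$ has no rainbow copy of $F$, then every copy of $F$ in $G$ must be ``spoiled'' by a monochromatic pair of edges, and by properness of $\chi$ this pair must consist of non-adjacent edges. Thus, ruling out such a colouring amounts to ruling out the existence of a family of ``conflict pairs'', one per $F$-copy in $G$, consistent with a decomposition of $E(G)$ into matchings (the colour classes). The key combinatorial input is the following density bound: for every pair $\{e,e'\}$ of non-adjacent edges of $F$, let $J_{e,e'}$ be the graph obtained from two disjoint copies $F_1,F_2$ of $F$ by identifying $e \in F_1$ with $e' \in F_2$. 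Then $v(J_{e,e'}) = 2v(F)-2$ and $e(J_{e,e'}) = 2e(F)-1$, so
\[
  d_2(J_{e,e'}) \;=\; \frac{2e(F)-2}{2v(F)-4} \;=\; \frac{e(F)-1}{v(F)-2} \;\leq\; m_2(F),
\]
and a short case analysis (splitting any subgraph $J' \subseteq J_{e,e'}$ as $J_1 \cup J_2$ with $J_i \subseteq F_i$) extends this to $m_2(J_{e,e'})\le m_2(F)$. Hence the ``conflict-witnessing gluings'' stay at the critical density.

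Next I would apply the container theorem to the $e(F)$-uniform hypergraph $\mathcal H$ on vertex set $E(K_n)$ whose hyperedges are the edge-sets of $F$-copies. The co-degree conditions required by the container theorem reduce, via the above calculation, to bounds on the number of $F$-copies through a fixed pair of non-adjacent edges and on the analogous higher-order ``glued'' $F$-unions---all of which are controlled at the threshold $p \asymp n^{-1/m_2(F)}$. The theorem then produces a family $\mathcal C$ of container graphs on $[n]$ with $\log |\mathcal C| = o(n^2 p)$, each $C \in \mathcal C$ containing only a vanishing fraction of the copies of $F$ in $K_n$. A standard union bound of the form $\sum_{C \in \mathcal C} \Pr[\,G(n,p) \subseteq C\,]$ is then $o(1)$ for $p \geq C n^{-1/m_2(F)}$ with $C$ large enough, which yields the conclusion.

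The main obstacle is the correct setup of the auxiliary hypergraph so that its ``independent-like'' subsets exactly correspond to edge-sets admitting a rainbow-free proper colouring, together with the associated dense supersaturation lemma---namely, that any sufficiently dense graph contains many $F$-copies no two of which can be simultaneously spoiled by a single matching. The density bound $m_2(J_{e,e'}) \leq m_2(F)$ is the pivot that allows both the container extraction and the supersaturation step to go through: it ensures that the co-degree of any pair of ``conflict edges'' in $\mathcal H$ is subcritical, which is precisely what the container machinery requires in order to output a container family of subcritical size.
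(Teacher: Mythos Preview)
The paper does not prove this statement at all --- it is quoted verbatim from Kohayakawa, Konstadinidis and Mota~\cite{Kohayakawa:2014} as a known $1$-statement, and the present paper is concerned exclusively with the matching $0$-statements. So there is no ``paper's own proof'' to compare against.

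That said, your sketch has a genuine gap that you yourself flag but do not close. Applying the container theorem to the hypergraph $\mathcal H$ whose hyperedges are the edge-sets of $F$-copies produces containers for \emph{$F$-free} subgraphs of $K_n$, and the union bound $\sum_{C\in\mathcal C}\Pr[G(n,p)\subseteq C]=o(1)$ shows only that $G(n,p)$ is w.h.p.\ not $F$-sparse. This says nothing about proper colourings: a graph with many $F$-copies can still admit a proper edge-colouring with no rainbow $F$ (indeed $K_n$ does, for small $n$ relative to $F$). Your ``independent-like'' sets would have to encode \emph{edge-sets together with a proper colouring}, and you give no construction that achieves this. The density computation for $J_{e,e'}$, while correct, is not plugged into any step that actually uses it.

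What does work, and is essentially the argument in~\cite{Kohayakawa:2014}, is a direct double-count. In any proper colouring each colour class is a matching, so the number of monochromatic non-adjacent pairs is $\sum_c \binom{|M_c|}{2} \le \tfrac{n}{4}\,e(G)$. Each such pair lies in at most $O(n^{v(F)-4}p^{e(F)-2})$ copies of $F$ in $G(n,p)$ (this is where a density calculation akin to your $J_{e,e'}$ bound enters, via a second-moment or deletion argument). Multiplying, at most $O(n^{v(F)-1}p^{e(F)-1})$ copies of $F$ can be spoiled, whereas $G(n,p)$ contains $\Theta(n^{v(F)}p^{e(F)})$ copies; since $np\to\infty$ the spoiled copies are a vanishing fraction, so a rainbow copy survives. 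Containers are not needed.
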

 
Note that $F = K_3$ is a trivial case since $K_3$ is an obvious obstruction. Therefore, any graph $F$ which contains $K_3$ as the $2$-densest subgraph is a potential candidate for having an obstruction. Indeed, the above authors showed in~\cite{Kohayakawa-Anti:2014} that there exists an infinite family of graphs for which the threshold is asymptotically below the guessed one. Here we prove that at least in the case of sufficiently large complete graphs and cycles, the situation is as expected.

\begin{theorem}\label{thm:ar_proper}
Let $F$ be a graph isomorphic to either a cycle on at least $7$ vertices or a complete graph on at least $19$ vertices. Then there exist constants $c,C > 0$ such that 
\begin{equation*}
     \lim_{n\to \infty}\Pr[\Gnp \aramcolp F] =
       \begin{cases}           
         1,&\text{if \(p \geq Cn^{-1/m_2(F)}\)}, \\
         0,&\text{if \(p \leq cn^{-1/m_2(F)}\)}.
       \end{cases}
  \end{equation*}
\end{theorem}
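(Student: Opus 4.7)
The plan is to apply the general algorithmic framework established in the first half of the paper. To instantiate it for the proper-colouring anti-Ramsey property, one must specify the peeling rule: an edge $e$ is ``safely peelable'' if, for any proper edge colouring of the remaining graph that avoids rainbow $F$-copies, the colouring can always be extended to $e$ while preserving both properness and the absence of a rainbow $F$. Once the peeling is formalised, the framework reduces the probabilistic claim that $\Gnp \naramcolp F$ holds w.h.p.\ for $p \le cn^{-1/m_2(F)}$ to a deterministic one: every graph $G$ on boundedly many vertices with $m(G) \le m_2(F)$ admits a proper edge colouring with no rainbow copy of $F$.

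For the cycle case $F = C_k$ with $k \ge 7$, the density hypothesis gives $e(G) \le v(G)(k-1)/(k-2)$, so $G$ is only marginally denser than a forest and has average degree strictly below $12/5$. I would proceed by induction on $v(G)$: pick a vertex $v$ of degree at most $2$, apply the inductive hypothesis to $G \setminus \{v\}$, and extend the colouring to the at most two edges at $v$. For each new edge, every $C_k$-copy through it uses a long coloured subpath in $G \setminus \{v\}$, so the new edge can be coloured to match some edge on that subpath. The bound $k \ge 7$ leaves enough structural room to satisfy these colour-matching constraints for both new edges simultaneously, while still obeying properness at $v$.

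For the clique case $F = K_k$ with $k \ge 19$, the density bound $m(G) \le m_2(K_k) = (\binom{k}{2} - 1)/(k-2)$ translates, via the identity $k^2 - k - 2 = (k+1)(k-2)$, into the clean statement that the average degree of $G$ and of every subgraph is at most $k+1$. Since any vertex of a $K_k$-copy in $G$ must have degree at least $k-1$, iteratively peeling vertices of degree at most $k-2$ reduces the problem to a ``dense core'' with minimum degree at least $k-1$ but with $d \le (k+1)/2$. The combination of these two conditions heavily restricts the structure of the core and of its $K_k$-copies. I would then build the proper colouring explicitly: start with a proper edge colouring guaranteed by Vizing's theorem using $\Delta(G) + 1$ colours, and process each $K_k$-copy in turn, forcing a pair of disjoint edges inside it to share a colour via local recolourings consistent with properness and with the previously processed $K_k$-copies.

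The main obstacle lies in this last step: the colour-merging procedure for cliques. Each vertex demands $\deg(v)$ distinct colours on its incident edges, so a proper colouring has little slack, yet we must engineer enough coincidences to kill every potential rainbow $K_k$. The lower bound $k \ge 19$ enters precisely here --- only when $k$ is large enough does the gap between $m_2(K_k)$ and $\binom{k}{2}$ provide a sufficient disjoint-edge budget inside each $K_k$-copy to absorb the local recolourings. Showing that these recolourings propagate globally without spoiling properness or creating new rainbow $K_k$-copies elsewhere is where the bulk of the work should lie.
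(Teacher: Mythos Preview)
Your identification of the overall framework is correct: the $1$-statement is known, and for the $0$-statement one reduces via Theorem~\ref{lemma:main} and Corollary~\ref{lemma:main_density} to the deterministic claim that every $G$ with $m(G)\le m_2(F)$ satisfies $G\naramcolp F$. The peeling rule you describe is too vague; the paper's Algorithm~\ref{algo:ar_proper} peels specifically pairs of \emph{disjoint $F$-equivalent} edges (giving them the same new colour is automatically proper and kills any $F$-copy through them), then edges not in any $F$-copy, and only afterwards checks that what remains is $F$-closed. But this is a secondary issue; the real divergence, and the real gaps, are in your deterministic arguments.

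For cycles, removing a single degree-$\le 2$ vertex $v$ and extending does not obviously work. Every $C_k$ through $v$ uses both edges $vu$ and $vw$, and there may be many $u$--$w$ paths of length $k-2$ in $G\setminus\{v\}$; you must choose colours $c(vu)\neq c(vw)$, each avoiding all colours already present at $u$ resp.\ $w$, and such that every one of those paths is hit by a repeated colour. Nothing in the density bound prevents there being many such paths with pairwise disjoint colour sets. The paper sidesteps this entirely: it shows that in a vertex-minimal counterexample no two degree-$2$ vertices are adjacent (by removing such a pair $v_1,v_2$ and colouring their two outer edges identically --- these are disjoint and $F$-equivalent), and then a pure counting argument on the independent set $V_2$ of degree-$2$ vertices yields a contradiction for $k\ge 7$.

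For cliques, your Vizing-then-recolour plan is genuinely different from the paper's, but you correctly flag that the propagation of local recolourings is the entire difficulty, and you offer no mechanism to control it. The paper avoids any recolouring: it takes a degeneracy ordering $(v_1,\dots,v_n)$ with at most $k+1$ backward neighbours per vertex and builds the colouring forward, maintaining the invariants that at most three backward edges at each $v_j$ reuse old colours, and that any such reuse is forced by a $K_k$ already present among earlier vertices. A structural claim (Claim~\ref{claim:cliques_overlap}) bounding how many vertices can extend a near-clique to a full $K_k$ then limits the number of forbidden colours at the inductive step, and the bound $k\ge 19$ emerges from the inequalities $\lfloor 6k/(k-3)\rfloor\le 7$ and $k-7>11$ needed to close the argument. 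Your proposal does not supply a comparable mechanism, so as it stands the clique case is an outline of where the work lies rather than a proof strategy.
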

 
We remark that our bounds on the minimum size of the cliques resp.\ cycles are simply a consequence of our proof and probably not tight. As far as we know, the result actually could hold for all cliques and cycles of size at least $4$. 

 
\subsection{Outline of the Proof and organisation of the paper}\label{sec:outline}
 The main goal of this paper is to provide a unifying framework for proving $0$-statements for Ramsey-type properties. The main idea is to view the problem from an algorithmic perspective: we aim at providing an algorithm that finds the desired colouring with high probability. To do this we take the given random hypergraph $\Hknp$ as input and first 'strip of' easily colourable edges, where the definition of 'easily colourable' depends on the type of the given Ramsey problem. We then argue that whatever remains after the end of this stripping procedure can be partitioned into blocks that can be coloured separately. Our key result (Theorem~\ref{lemma:main}) states that with probability $1-o(1)$ these blocks will  have size at most some constant $L$ that depends (in some well-understood way) on the graph $F$. It is well known that in a typical random hypergraph with density $n^{-\alpha}$ all subgraphs of constant size have density at most $1/\alpha$. This implies that it suffices to prove that a statement of the form
\begin{equation}\label{eq:deterministic}
\text{all $\ell$-graphs } G\text{ with $m(G)\le m_\ell(F)$ satisfy}\quad G\nanycol F
\end{equation}
holds \emph{deterministically},  where by $\anycol$ we mean any of the discussed Ramsey properties. Note that any graph with density $m_\ell(F)$ appears in $\Hknp$ with constant probability for $p=cn^{-m_\ell(F)}$ (cf. proof of Corollary~\ref{lemma:main_density}  for details). Thus, the condition in \eqref{eq:deterministic} is actually {\em necessary} for the $0$-statement to hold. 
Formally, we call a graph $G$ an \emph{obstruction} for $F$ if $m(G)\le m_\ell(F)$ and $ G\anycol F$.
Note that such obstructing graphs $G$ indeed do exist. For some Ramsey type problems there are only a few, for others there exist infinitely many. We comment on that in more detail later. 
Our  aim is to show that the condition in \eqref{eq:deterministic} is also {\em sufficient}, i.e.\ in order to prove the $0$-statement it is sufficient to show that obstructions do not exist. 
We summarize this in the following ``meta-theorem''.

\begin{metathm}
Let $F$ be an $\ell$-graph for which \eqref{eq:deterministic} holds. Then
\begin{equation*}
  \lim_{n\to \infty}\Pr[\Hknp \anycol F] =
    \begin{cases}           
       1,&\text{if \(p \geq Cn^{-1/m_\ell(F)}\)}, \\
       0,&\text{if \(p \leq cn^{-1/m_\ell(F)}\)}.
    \end{cases}
\end{equation*}
\end{metathm}

Recall from the previous section that the $1$-statements are known to hold for all Ramsey problems considered in this paper. The key statement of our meta theorem is thus that the bound from the $1$-statement is actually tight, whenever  \eqref{eq:deterministic} holds.

In Section~\ref{sec:grow_approach} we prove our framework theorem, Theorem~\ref{lemma:main}. In Section~\ref{sec:applications} we provide the proofs for Theorems~\ref{thm:ramsey_cliques},~\ref{thm:2_bnd} and~\ref{thm:ar_proper} by showing deterministic statements corresponding to~\eqref{eq:deterministic}.


 \section{A general framework}\label{sec:grow_approach}
\subsection{Outline of the Method}

The key idea for the proof of  the Meta-Theorem from Section~\ref{sec:outline} is to introduce appropriate notions that capture the structure of overlapping copies of $F$. In the following definitions we always assume that $F$ contains at least two edges.

\begin{definition}[$F$-equivalence]
Given $\ell$-graphs $F$ and $G$, we say that two edges $e_1, e_2 \in E(G)$ are \emph{$F$-equivalent}, with  notation $e_1 \equiv_F e_2$, if for every $F$-copy $F'$ in $G$ we have $e_1 \in E(F')$ if and only if $e_2 \in E(F')$.
\end{definition}

\begin{definition}
Given an $\ell$-graph $F$ we define $\gamma(F)$ to be the largest intersection of two distinct edges in $F$, i.e.
$$ \gamma(F) := \max \{ |e_1 \cap e_2| \; : \; e_1, e_2 \in E(F) \text{ and } e_1 \neq e_2\}. $$
\end{definition}

\begin{definition}[$F$-closed property]
For given $\ell$-graphs $F$ and $G$, we define the property of being \emph{$F$-closed} as follows:
\begin{itemize}
\item an edge $e \in E(G)$ is \emph{$F$-closed} if

\begin{enumerate}[(a)]
\item $\gamma(F) = \ell - 1$ and $e$ belongs to at least two $F$-copies in $G$ or
\item $\gamma(F) < \ell - 1$ and $e$ belongs to at least two $F$-copies in $G$ and no edge $e' \in E(G) \setminus \{e\}$ is $F$-equivalent to $e$,
\end{enumerate}
\item an $F$-copy $F'$ in $G$ is \emph{$F$-closed} if at least three edges from $E(F')$ are closed,
\item the $\ell$-graph $G$ is \emph{$F$-closed} if every vertex and edge of $G$ belongs to at least one $F$-copy and every $F$-copy in $G$ is closed.
\end{itemize}
If the $\ell$-graph $F$ is clear from the context, we simply write \emph{closed}.
\end{definition}

\begin{definition}[$F$-blocks]
Given $\ell$-graphs $F$ and $G$ such that $G$ is $F$-closed, we say that $G$ is an $F$-block if for every non-empty proper subset of edges $E' \subsetneq E(G)$ there exists an $F$-copy $F'$ in $G$ such that $E(F') \cap E' \neq \emptyset$ and $E(F') \setminus E' \neq \emptyset$ (in other words, there exists an $F$-copy which partially lies in $E'$).
\end{definition}



With these definitions at hand we can now formulate our key result:

\begin{theorem} \label{lemma:main}
Let $\ell \ge 2$ be an integer and $F$ a strictly $\ell$-balanced $\ell$-graph such that either $F$ has exactly three edges and $\gamma(F) = \ell - 1$ or $F$ contains at least $4$ edges. Then there exist constants $c, L > 0$ such that for $p \leq cn^{-1/m_{\ell}(F)}$, $G \sim \Hknp$ satisfies  w.h.p. that every $F$-block $B \subseteq G$ contains at most $L$ vertices.
\end{theorem}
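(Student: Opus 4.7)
The plan is to represent any $F$-block $B$ as a union of $F$-copies $F_1,F_2,\dots,F_t$, added in order (a ``grow sequence''), such that each $F_i$ ($i\ge 2$) shares at least one edge with the partial union $B_{i-1}:=\bigcup_{j<i}F_j$. The block property guarantees that such a sequence exists: if the graph on the $F$-copies of $B$ in which two copies are joined whenever they share at least one edge were disconnected, the resulting partition of $E(B)$ would violate the definition of a block. Setting $J_i:=F_i\cap B_{i-1}$ and $\psi(H):=e(H)-m_\ell(F)\,v(H)$, strict $\ell$-balance of $F$ yields
\[
\psi(F)-\psi(J_i)=(e(F)-e(J_i))-m_\ell(F)\bigl(v(F)-v(J_i)\bigr)\ge 0,
\]
with equality if and only if $J_i$ consists of a single edge ($v(J_i)=\ell$), and with a uniform positive lower bound $\delta_0=\delta_0(F)>0$ whenever $v(J_i)\ge\ell+1$ (a ``fat'' overlap), because $F$ has only finitely many subgraphs. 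Telescoping along the grow sequence gives $\psi(B)=\psi(F_1)+\sum_{i\ge 2}(\psi(F)-\psi(J_i))$, so each fat step increases $\psi(B)$ by at least $\delta_0$ while single-edge-overlap steps contribute nothing.

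The heart of the argument is then the structural claim that there exist constants $L=L(F)$ and $K=K(F)$ such that every $F$-block on more than $L$ vertices admits a grow sequence containing at least $K$ fat steps, with $K$ chosen so that $\psi(F_1)+K\delta_0=1-\ell\,m_\ell(F)+K\delta_0\ge\eps$ for some absolute constant $\eps>0$. Establishing this claim is where the closedness hypothesis enters and will be the main obstacle. In case (a), where $\gamma(F)=\ell-1$, I would argue that once the grow sequence is long enough, some $F$-copy $F_i$ must attach via an overlap containing at least two edges: otherwise the three closed edges guaranteed by the definition of a closed copy could only be shared with other single-edge-attached copies, and a careful counting of how many $F$-copies can meet at a common edge produces a contradiction. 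In case (b), $\gamma(F)<\ell-1$, the $F$-equivalence-uniqueness condition built into closedness is used to rule out configurations in which closed edges accumulate many single-edge overlaps without ever producing a larger intersection. The special subcase in which $F$ has exactly three edges with $\gamma(F)=\ell-1$ is treated separately: closedness of an $F$-copy then forces all three of its edges to be closed, giving a strong rigidity that can be exploited directly.

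Once the claim is in hand, a first-moment computation concludes the proof. Encoding each $F$-block by its grow sequence and the isomorphism type of each $J_i$ (from a finite alphabet depending only on $F$), the expected number of $F$-blocks on a grow sequence of length $t$ inside $\Hknp$ is bounded by
\[
n^{v(F)}p^{e(F)}\prod_{i=2}^{t}\Bigl(C(F)\cdot n^{v(F)-v(J_i)}\,p^{e(F)-e(J_i)}\Bigr),
\]
for a combinatorial constant $C(F)$. Substituting $p\le cn^{-1/m_\ell(F)}$, each factor equals $C(F)\,c^{e(F)-e(J_i)}\,n^{-(\psi(F)-\psi(J_i))/m_\ell(F)}$, so the whole product collapses to $C(F)^t\cdot n^{\ell-1/m_\ell(F)}\cdot n^{-\psi(B)/m_\ell(F)}$ up to constants. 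The claim guarantees $\psi(B)\ge\eps$ as soon as $v(B)>L$, and hence a factor $n^{-\eps/m_\ell(F)}$ drives the expectation to zero for each admissible $t$; choosing $c$ small enough absorbs $C(F)^t$ uniformly in $t$, so a union bound over $t$ and over all block sizes exceeding $L$ delivers the desired w.h.p.\ conclusion.
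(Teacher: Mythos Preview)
Your overall architecture---grow sequences plus a first-moment bound---is exactly what the paper does, but your first-moment estimate has a genuine gap that breaks the argument as written.

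\textbf{The encoding bound is wrong.} You claim that the expected number of grow sequences of length $t$ is at most
\[
n^{v(F)}p^{e(F)}\prod_{i=2}^{t}\Bigl(C(F)\cdot n^{v(F)-v(J_i)}\,p^{e(F)-e(J_i)}\Bigr),
\]
where $C(F)$ is a constant depending only on $F$. Recording the isomorphism type of $J_i$ is not enough: you must also record \emph{where} $J_i$ sits inside $B_{i-1}$. Since $B_{i-1}$ has up to $i\cdot v(F)$ vertices, this costs an additional factor of order $(i\,v(F))^{v(J_i)}$ per step. For a single-edge overlap this is $(i\,v(F))^{\ell}$, so the true contribution of such a step is $(i\,v(F))^{\ell}\,n^{v(F)-\ell}p^{e(F)-1}\le (i\,v(F))^{\ell}c^{e(F)-1}$, not $C(F)\,c^{e(F)-1}$. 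No choice of $c$ makes this less than $1$ uniformly in $i$, so ``choosing $c$ small enough absorbs $C(F)^t$'' fails: the product over single-edge steps is of order $(t!)^{\ell}c^{(e(F)-1)t}$, which diverges for $t$ large.

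\textbf{How the paper fixes this.} The paper's grow sequence is produced by a specific algorithm: whenever the current partial graph $G_{i-1}$ still contains a not-yet-closed $F$-copy, the next copy $F_i$ is attached at a \emph{deterministically chosen} edge (the earliest non-closed edge of the earliest non-closed copy). For such ``open-case'' regular steps the attachment edge need not be encoded at all, so their contribution is a genuine constant $\le c$. The $i^{\ell}$ factor is only paid for the ``closed-case'' regular steps (when $G_{i-1}$ is already $F$-closed and an arbitrary intersecting edge is used) and for degenerate steps. The paper then proves (its Claim~\ref{claim:open-edges-bound}) that closed-case regular steps occur only within the first $O(d)$ steps if there are at most $d$ degenerate steps; combined with a prefix truncation at length $O(\log n)$, every factor $i^{v(F)}$ is at most $\polylog(n)$ and appears only boundedly many times. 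This open/closed bookkeeping is precisely what is missing from your argument and is not recoverable from your encoding.

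\textbf{The structural claim.} Your assertion that any $F$-block on more than $L$ vertices admits a grow sequence with at least $K$ fat (degenerate) steps is essentially correct---indeed the paper proves the stronger linear bound $d\ge s/(3v(F))$ on the number of degenerate steps in any grow sequence produced by its algorithm. But your sketch (``a careful counting of how many $F$-copies can meet at a common edge produces a contradiction'') does not reflect the actual mechanism. The paper introduces \emph{fully-open} steps (regular steps whose inner vertices have not yet been touched), shows that a regular step destroys at most one fully-open predecessor while a degenerate step destroys at most $v(F)-\ell+1$, and then proves by a delicate induction that $\fo(S_i)\ge \reg(S_i)/2-\deg(S_i)\,v(F)$. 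Since a completed block has no fully-open steps, this yields the required lower bound on degenerate steps. The closedness hypothesis enters through Claim~\ref{claim:tilde-F-is-F-f+e} (which classifies the new $F$-copies created by a regular attachment) and Claim~\ref{claim:regular-fully-open}, not through any direct count of copies meeting at an edge. Your two-case split on $\gamma(F)$ is in the right spirit, but the argument you outline does not go through as stated.
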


In all our applications we will use the following corollary of Theorem \ref{lemma:main} which gives a bound on the density $m$ of $F$-blocks. 

\begin{corollary} \label{lemma:main_density}
Let $\ell \ge 2$ be an integer and $F$ a strictly $\ell$-balanced $\ell$-graph such that either $\gamma(F) = \ell - 1$ and $F$ contains at least $3$ edges or $F$ contains at least $4$ edges. Then there exists a constant $c > 0$ such that for $p \le cn^{-1/m_\ell(F)}$, $G \sim \Hknp$ w.h.p. satisfies that for every $F$-block $B \subseteq G$ we have $m(B) \le m_\ell(F)$. Moreover, if $p \ll n^{-1/m_\ell(F)}$ then  strict inequality holds. 
\end{corollary}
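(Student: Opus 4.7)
The corollary follows by combining Theorem~\ref{lemma:main} (which bounds the number of vertices of an $F$-block by an absolute constant $L$) with a standard first-moment argument that rules out dense small subgraphs in $\Hknp$. The plan is therefore to fix the constant $L = L(F)$ supplied by Theorem~\ref{lemma:main}, and then to show that for $p \le cn^{-1/m_\ell(F)}$ with $c$ sufficiently small, w.h.p.\ no subgraph $H \subseteq G$ on $v(H) \le L$ vertices has density $d(H) > m_\ell(F)$. Since every $F$-block $B \subseteq G$ satisfies $v(B) \le L$ w.h.p.\ by Theorem~\ref{lemma:main}, the same bound then holds for $m(B)$.

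\textbf{Key step.} For fixed integers $\ell + 1 \le v \le L$ and $1 \le e \le \binom{v}{\ell}$, the expected number of (labelled) subgraphs of $\Hknp$ with exactly $v$ vertices and $e$ edges is at most
\[
\binom{n}{v}\binom{\binom{v}{\ell}}{e} p^e \;=\; O\!\left( n^v p^e \right) \;=\; O\!\left( c^{e}\, n^{\,v - e/m_\ell(F)} \right).
\]
If $d(H) = e/v > m_\ell(F)$, then the exponent $v - e/m_\ell(F)$ is negative; since $v$ and $e$ range over a finite set (as $v \le L$), there is a uniform $\delta = \delta(F,L) > 0$ such that $v - e/m_\ell(F) \le -\delta$ whenever $e/v > m_\ell(F)$, so the expectation is $O(n^{-\delta}) = o(1)$. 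A union bound over the $O(1)$ admissible pairs $(v,e)$ gives the claim in the weak case $p \le cn^{-1/m_\ell(F)}$.

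\textbf{The strict case.} For the moreover statement, suppose $p \ll n^{-1/m_\ell(F)}$ and consider a subgraph with $v$ vertices and $e \ge 1$ edges satisfying $e/v \ge m_\ell(F)$ (including equality). Writing the first-moment bound as
\[
n^v p^e \;=\; \bigl(p \cdot n^{1/m_\ell(F)}\bigr)^{e}\, n^{\,v - e/m_\ell(F)} \;\le\; \bigl(p \cdot n^{1/m_\ell(F)}\bigr)^{e},
\]
which tends to $0$ as $n\to\infty$ since $e \ge 1$ and $p n^{1/m_\ell(F)} \to 0$. Another union bound over the finitely many pairs $(v,e)$ yields w.h.p.\ that no subgraph on at most $L$ vertices has density $\ge m_\ell(F)$, and therefore $m(B) < m_\ell(F)$ strictly for every $F$-block $B$.

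\textbf{Main (non)obstacle.} This is really a bookkeeping exercise on top of Theorem~\ref{lemma:main}; the only thing worth double-checking is the uniformity of the gap $\delta$, which uses only the fact that $v \le L$ is bounded, so $e/v$ takes values in a finite set of rationals and stays a uniform distance away from $m_\ell(F)$ whenever it exceeds it. Once Theorem~\ref{lemma:main} is in hand, no further probabilistic input is needed.
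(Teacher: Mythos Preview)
Your proof is correct and follows essentially the same route as the paper's own proof: invoke Theorem~\ref{lemma:main} to bound the size of every $F$-block by a constant $L$, then run a first-moment argument over the finitely many possible ``shapes'' of small subgraphs to rule out anything of density exceeding (or, in the moreover part, meeting) $m_\ell(F)$. The paper phrases the enumeration over subgraph types $S$ and extracts a uniform gap $\alpha := \min\{m(S)-m_\ell(F) : v(S)\le L,\; m(S)>m_\ell(F)\}$, whereas you enumerate over pairs $(v,e)$ and extract the gap $\delta$; these are the same idea. One cosmetic point: your restriction to $v\ge \ell+1$ silently skips the single-edge case $v=\ell$, $e=1$, but this is harmless since $1/\ell < 1/(\ell-\gamma(F)) \le m_\ell(F)$ under the hypotheses.
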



We conclude this section with a basic property of $F$-closed graphs that will be used throughout the applications.

\begin{lemma} \label{lemma:block_split}
Let $F$ be an $\ell$-graph. Then if an $\ell$-graph $G$ is $F$-closed, there exists a partitioning $E(G) = E_1 \cup \ldots \cup E_k$, for some $k \in \mathbb{N}$, such that each subgraph $B_i$ induced by the set of edges $E_i$ is an $F$-block and each $F$-copy in $G$ is entirely contained in some block $B_i$.
\end{lemma}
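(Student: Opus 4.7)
The plan is to build the partition from the connectivity structure of overlapping $F$-copies. Define an auxiliary graph $H$ whose vertex set is $E(G)$ and in which two edges $e_1,e_2\in E(G)$ are joined whenever some $F$-copy in $G$ contains both of them. Let $E_1,\dots,E_k$ be the vertex sets of the connected components of $H$, and let $B_i$ denote the subgraph of $G$ induced by $E_i$ (in the sense of $G[E_i]$). I will then verify that (1) each $F$-copy lies in a unique $B_i$, (2) each $B_i$ is $F$-closed, and (3) each $B_i$ satisfies the block property.

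Part (1) is immediate: all edges of a fixed $F$-copy $F'$ are pairwise adjacent in $H$, so they all belong to the same component. This also implies the useful fact that whenever $e\in E(B_i)$ is contained in some $F$-copy $F'\subseteq G$, the entire copy $F'$ is contained in $B_i$. For part (2), I exploit this fact to transfer $F$-closedness from $G$ to $B_i$. Every edge of $B_i$ is contained in some $F$-copy of $G$, which must lie inside $B_i$; similarly every vertex of $B_i$ is covered. To transfer closedness of an individual edge $e\in E(B_i)$, note that in case $\gamma(F)=\ell-1$ the two witnessing $F$-copies for $e$ in $G$ automatically lie in $B_i$. In case $\gamma(F)<\ell-1$ one must also check that no edge of $B_i$ becomes $F$-equivalent to $e$ inside $B_i$ that was not so inside $G$; but a distinguishing $F$-copy $F'\subseteq G$ for $e$ and $e'$ (say containing $e\notin E(F')\ni e'$ or vice versa) contains at least one of these two edges, hence lies in $B_i$ by the key observation, so it still distinguishes $e$ from $e'$ inside $B_i$. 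Closedness of an $F$-copy (three closed edges) then carries over verbatim.

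For part (3), suppose some non-empty proper subset $E'\subsetneq E(B_i)$ violated the block condition, i.e.\ every $F$-copy of $B_i$ were contained in $E'$ or disjoint from $E'$. Pick $e_1\in E'$ and $e_2\in E(B_i)\setminus E'$. Since $E(B_i)$ is a connected component of $H$, there is a path $e_1=f_0,f_1,\dots,f_t=e_2$ in $H$ with all $f_j\in E(B_i)$. Some index $j$ satisfies $f_j\in E'$ and $f_{j+1}\notin E'$, and by construction of $H$ there is an $F$-copy of $G$ (hence of $B_i$, by the key observation again) containing both $f_j$ and $f_{j+1}$, contradicting the assumed behaviour of $E'$.

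I do not expect any real obstacle: the only subtle point is the equivalence-relation transfer for the case $\gamma(F)<\ell-1$, and it is handled by the single observation that any $F$-copy sharing an edge with $B_i$ must sit entirely inside $B_i$. Everything else is a straightforward unpacking of definitions.
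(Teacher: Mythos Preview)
Your proof is correct. The paper takes a different route: it picks a \emph{minimal} non-empty edge set $E'\subseteq E(G)$ with the property that every $F$-copy is either contained in $E'$ or disjoint from it, argues that $G[E']$ is an $F$-block by minimality and that $G\setminus E'$ remains $F$-closed, and then iterates. Your approach instead constructs all the blocks in one shot as the connected components of the auxiliary ``$F$-copy overlap'' graph $H$. The two methods produce the same partition (a subset $E'$ is separated from its complement by no $F$-copy precisely when there is no $H$-edge across the cut, so the minimal such sets are exactly the components of $H$), but yours is more explicit and arguably cleaner: it identifies the blocks structurally rather than by an existence argument, and the verification of the block property via a path in $H$ is very transparent. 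The paper's version has the mild advantage that it never needs to revisit the $\gamma(F)<\ell-1$ case for the $F$-equivalence transfer, since closedness of edges is inherited automatically once one knows that $F$-copies respect the split; your handling of that point is fine, though.
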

\begin{proof}
Let $G$ be an $F$-closed $\ell$-graph and consider a smallest non-empty subset of edges $E' \subseteq E(G)$ such that every $F$-copy is either completely contained in $E'$ or avoids edges in $E'$. Observe that if an $F$-copy $F'$ in $G$ contains an edge $e \in E'$, then by the choice of $E'$ we have $E(F') \subseteq E'$. Similarly, if an $F$-copy $F'$ in $G$ contains an edge $e \in E(G) \setminus E'$ then $E(F') \subseteq E(G) \setminus E'$. Therefore, every edge $e \in E'$, resp. $e \in E(G) \setminus E'$ which was $F$-closed in $G$ remains $F$-closed in $G[E']$, resp. $G \setminus E'$, thus both $G[E']$ and $G \setminus E'$ are $F$-closed. By the minimality of $E'$ it follows that $G[E']$ is an $F$-block. We can now set $E_1 := E'$ and repeat the procedure on $G' := G \setminus E'$. In this way we obtain the desired partition $E_1, \ldots, E_k$.
\end{proof}


\subsection{Some useful facts} \label{sec:prelim}
The following lemma is a standard exercise in graph theory that we leave to the reader. 
\begin{lemma}[$k$-degeneracy] \label{thm:k-deg}
Let $G$ be a graph with $m(G) \le k$ for some $k \in \mathbb{R}$. Then there exists an ordering $(v_1, \ldots, v_n)$ of the vertices of $G$ such that
$$ |N(v_i) \cap \{v_1, \ldots, v_{i-1}\}| \le \lfloor 2k \rfloor $$
for every $i \in [n]$.
\end{lemma}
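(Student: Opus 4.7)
The plan is to construct the ordering greedily in reverse, peeling off vertices one at a time from a low-degree end. Concretely, I will set $v_n, v_{n-1}, \ldots, v_1$ in this order as follows: given that $v_{i+1}, \ldots, v_n$ have already been chosen, let $H_i := G[\{v \in V(G) : v \notin \{v_{i+1}, \ldots, v_n\}\}]$ be the subgraph induced by the remaining vertices, and pick $v_i$ to be a vertex of minimum degree in $H_i$. Once the ordering is built, the neighbours of $v_i$ among $\{v_1, \ldots, v_{i-1}\}$ are precisely the $H_i$-neighbours of $v_i$, so it suffices to show that $\deg_{H_i}(v_i) \le \lfloor 2k \rfloor$.

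The key observation is the density bound. Since $H_i$ is a subgraph of $G$, by assumption $d(H_i) = e(H_i)/v(H_i) \le m(G) \le k$. The sum of degrees in $H_i$ equals $2 e(H_i) \le 2k \cdot v(H_i)$, hence the minimum degree satisfies
\[
\delta(H_i) \le \frac{2 e(H_i)}{v(H_i)} \le 2k.
\]
Because $\delta(H_i)$ is an integer, this gives $\delta(H_i) \le \lfloor 2k \rfloor$, which is exactly the bound required for $v_i$.

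I expect no real obstacle here: the only thing to be careful about is the averaging step and the fact that we must use the full bound $m(G) \le k$ (not just $d(G) \le k$) precisely because after deletion we need the density bound to continue to hold on the induced subgraph $H_i$, and $H_i \subseteq G$ ensures this. The proof fits in a few lines, and is essentially the standard argument that a graph of maximum density $k$ is $\lfloor 2k \rfloor$-degenerate.
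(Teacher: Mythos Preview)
Your proof is correct and is precisely the standard degeneracy argument. The paper itself does not give a proof of this lemma (it states ``The following lemma is a standard exercise in graph theory that we leave to the reader''), so there is nothing to compare; your argument is exactly the kind of routine verification the authors had in mind.
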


The proof of the following fact follows easily from Hall's theorem, cf.\ e.g.\cite{NenadovSteger:2014}.

\begin{lemma} \label{thm:k-orient}
Let $G$ be a graph with $m(G) \le k$ for some $k \in \mathbb{N}$. Then there exists an orientation of the edges of $G$ such that in the resulting directed graph each vertex has out-degree at most $k$.
\end{lemma}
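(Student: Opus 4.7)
The plan is to reformulate the orientation problem as a bipartite matching problem and then invoke Hall's theorem, which is the route suggested by the statement. Interpreting the orientation of an edge $e=\{u,v\}$ as a choice of its tail (so that ``out-degree of $v$ at most $k$'' means ``at most $k$ edges are assigned to $v$ as their tail''), I would build an auxiliary bipartite graph $B$ with one side equal to $E(G)$ and the other side equal to $V(G)\times[k]$ (each vertex duplicated into $k$ ``slots''). I would connect each edge $e=\{u,v\}\in E(G)$ to all $2k$ slots $(u,i)$ and $(v,i)$, $i\in[k]$. A matching of $B$ that saturates $E(G)$ then assigns every edge to a slot at some endpoint, with at most $k$ edges per vertex; orienting each edge away from its assigned vertex gives the desired orientation.

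Next I would check Hall's condition on the $E(G)$-side. For a subset $S\subseteq E(G)$, let $V_S$ denote the set of vertices incident to at least one edge of $S$. The neighbourhood of $S$ in $B$ is precisely $V_S\times[k]$, so $|N_B(S)|=k|V_S|$. Since the subgraph of $G$ induced by the edge set $S$ has $|S|$ edges on $|V_S|$ vertices, the assumption $m(G)\le k$ yields
\[
\frac{|S|}{|V_S|}\;\le\;m(G)\;\le\;k,
\]
so $|S|\le k|V_S|=|N_B(S)|$. Hence Hall's condition holds and a saturating matching exists, producing the desired orientation.

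The only mild subtlety is the trivial case in which $V_S=\emptyset$ (forced by $S=\emptyset$), which is vacuous, and the fact that we need $k\in\mathbb{N}$ so that $V(G)\times[k]$ makes sense; both are given in the hypothesis. I do not foresee any real obstacle: the proof is a one-shot application of Hall's theorem once the bipartite model is set up, and the density bound feeds directly into Hall's inequality.
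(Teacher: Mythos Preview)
Your proof is correct and is exactly the Hall's-theorem argument the paper has in mind: the paper does not spell out the details but simply states that the fact ``follows easily from Hall's theorem'' and gives a reference. Your bipartite model with vertex-slots $V(G)\times[k]$ and the verification of Hall's condition via $|S|/|V_S|\le m(G)\le k$ is precisely the standard way to carry this out.
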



\begin{lemma}[Markov's Inequality] \label{thm:markov}
Let $X$ be a non-negative random variable. For all $t > 0$ we have $\Pr[X \geq t] \le \frac{\mathbb{E}[X]}{t}$.
\end{lemma}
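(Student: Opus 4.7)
The plan is to prove Markov's inequality via the classical pointwise comparison followed by taking expectations. The key observation is that for every outcome $\omega$ in the underlying sample space, the pointwise inequality
$$ X(\omega) \ge t \cdot \mathbf{1}_{\{X \ge t\}}(\omega) $$
holds. This can be verified by splitting into two cases: if $X(\omega) \ge t$, then the indicator evaluates to $1$ and the inequality reduces to $X(\omega) \ge t$, which is exactly the hypothesis of this case; if $X(\omega) < t$, then the indicator evaluates to $0$ and the inequality reduces to $X(\omega) \ge 0$, which is precisely the assumed non-negativity of $X$. This case distinction is the entire content of the argument.

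Next I would take expectations on both sides of the pointwise inequality and use linearity and monotonicity of expectation to obtain $\mathbb{E}[X] \ge t \cdot \mathbb{E}[\mathbf{1}_{\{X \ge t\}}] = t \cdot \Pr[X \ge t]$. Since $t > 0$ by hypothesis, dividing through by $t$ yields the claim $\Pr[X \ge t] \le \mathbb{E}[X]/t$.

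There is no genuine obstacle in this proof; the result is completely standard. The only conceptual point worth highlighting is that the non-negativity of $X$ is essential, as it is precisely what rescues the case $X(\omega) < t$ in the pointwise comparison. Without that hypothesis, large negative values of $X$ could drive $\mathbb{E}[X]$ to $-\infty$ while $\Pr[X \ge t]$ remained bounded away from zero, so no inequality of this form could hold in general.
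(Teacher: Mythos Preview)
Your proof is correct and is the standard argument for Markov's inequality. The paper itself does not provide a proof of this lemma; it is simply stated as a well-known fact, so there is nothing to compare against.
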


\subsection{Proof of Theorem \ref{lemma:main}} \label{sec:proof_of_main}
Here we show that $F$-blocks are with high probability  only of constant size (Theorem~\ref{lemma:main}).
Before we prove Theorem \ref{lemma:main}, we first show how it implies Corollary \ref{lemma:main_density}.

\begin{proof}[Proof of Corollary \ref{lemma:main_density}]
Let $L$ and $c$ be constants given by Theorem \ref{lemma:main} when applied to an $\ell$-graph $F$. Without loss of generality, we may assume that $c < 1$. We first consider the case $p \le cn^{-1/m_\ell(F)}$.

Let $\alpha \in \mathbb{R}$ be a strictly positive constant such that for every $\ell$-graph $S$ on at most $L$ vertices with $m(S) > m_\ell(F)$ we have $m(S) \ge m_\ell(F) + \alpha$. More formally, we define 
an $\alpha>0$ as follows,
$$ \alpha := \min \{  m(S) - m_\ell(F)  \mid v(S) \le L \; \text{ and } \; m(S) > m_\ell(F) \}.$$
Since there are only finitely many such $\ell$-graphs $S$, $\alpha$ is well-defined. Consider now some $\ell$-graph $S$ on at most $L$ vertices with $m(S) \ge m_\ell(F) + \alpha$ and let $S' \subseteq S$ be a subgraph such that $e(S') / v(S') = m(S)$. Let $X_{S'}$ be  the random variable which denotes the number of $S'$-copies in $G$. 
Then the expected number $\EE X_{S'}$ of $S'$-copies in $G \sim \Hknp$ is at most
\begin{align*}
\EE X_{S'} &\le n^{v(S')} p^{e(S')} \le n^{v(S') - e(S') / m_\ell(F)} \\
&= \left( n^{1 - m(S')/m_\ell(F)} \right)^{v(S')} \le n^{- \alpha \cdot v(S')/m_\ell(F)} = o(1).
\end{align*}
Therefore, by Markov's inequality (Lemma \ref{thm:markov}) we have
$$\Pr[G \; \text{contains an} \; S\text{-copy}] \le \Pr[G\; \text{contains an} \; S'\text{-copy}] = \Pr[X_{S'} \ge 1] \le \EE X_{S'}.$$
 As there exist less than $2^{\binom{L}{\ell}}$ different $\ell$-graphs on at most $L$ vertices, a union-bound over all such $\ell$-graphs thus also gives
$$ \Pr[ \exists S \subseteq G \; \text{ such that } \; v(S) \le L \; \text{and} \; m(S) > m_\ell(F)] = o(1).$$
In particular, since w.h.p.\ $G$ is such that every $F$-block $B \subseteq G$ contains at most $L$ vertices it follows that $m(B) \le m_\ell(F)$, as required.

Let us now assume that $p \ll n^{-1/m_\ell(F)}$. Similarly as in the previous case, if $S$ is an $\ell$-graph on at most $L$ vertices with $m(S) \ge m_\ell(F)$, then for $p \ll n^{-1/m_\ell(F)}$ we have
that the expected number of $S'$-copies is 
$$ \EE X_{S'} \le n^{v(S')} p^{e(S')} = o(n^{v(S') - e(S') / m_\ell(F)}) = o(1),$$ 
where $S' \subseteq S$ is such that $e(S') / v(S') = m(S)$. The same argument as before shows that $G$ contains no copy of $S$, which finishes the proof.
\end{proof}

\begin{proof}[Proof of Theorem \ref{lemma:main}]
Our proof is a generalization of the approach from~\cite{NenadovSteger:2014Ramsey} to hypergraphs and general Ramsey problems.
The proof is essentially a first moment argument. We enumerate all possible $F$-blocks on more than $L$ vertices and show that the probability that one or more of them appears in $G \sim \Hknp$ is $o(1)$. The difficulty lies in the fact that straightforward enumerations (like choosing subsets of edges) do not work: we have too many choices. We thus have to design a more efficient way to encode $F$-blocks. To do that we make use of Algorithm~\ref{alg:grow_seq} that enumerates $F$-copies of a block in some clever way.

{\LinesNumbered
\begin{algorithm}[h]\label{alg:grow_seq}
  \DontPrintSemicolon
  $F_0 \leftarrow$ an arbitrary $F$-copy in $B$\;
  $G_0 \leftarrow F_0$\;
  $i \leftarrow 0$\;
  \While{$G_i \neq B$}{
    $i \leftarrow i + 1$\;
    \eIf{$G_{i-1}$ contains an $F$-copy which is not closed}{
      $j \leftarrow$ smallest index $j < i$ such that $F_j$ is not closed\;
      $e \leftarrow$ 
      an edge in $F_j$ which is not closed in $G_{i-1}$ but closed in $B$ \label{line:choose_e}\;
      $F_i \leftarrow$ an $F$-copy in $B$ but not $G_{i-1}$ which contains $e$\label{line:open-case}\;
    }{
      $F_i \leftarrow$ an arbitrary $F$-copy in $B$ but not
      $G_{i-1}$ which intersects $G_{i-1}$ in at least one edge\label{line:closed-case}\;
    }
    $G_i \leftarrow G_{i-1} \cup F_{i}$\;
  }
  $s \leftarrow i$
  \caption{Construction of a grow sequence for an $F$-block $B$.}\label{algo:grow-sequence-algo}
\end{algorithm}}

Let $B$ be an $F$-block. 
Algorithm \ref{algo:grow-sequence-algo} maps $B$ to a sequence  $(F_0, \dotsc, F_s)$ of copies of $F$. In order to see that the algorithm is well-defined it suffices to show that lines \ref{line:open-case} and \ref{line:closed-case} can always be executed. For line \ref{line:open-case} this follows directly from the condition in the if-statement: an $F$-copy that is not yet closed contains an edge $e$ that is closed in $B$ but not yet in $G_{i-1}$. As in line \ref{line:choose_e} we choose exactly such an edge, the desired copy in line \ref{line:open-case} exists. Similarly, if at some point the execution of  line \ref{line:closed-case} would not be possible, this would imply that there exists a subgraph $G_i \subsetneq B$ such that every $F$-copy in $B$ is completely contained in either $G_i$ or $\overline{G}_i = B \setminus E(G_i)$. Since $G_i$ is non-empty (it contains $F_0$) this contradicts the assumption that $B$ is an $F$-block. Thus line \ref{line:closed-case} is well-defined. Finally, as the number of edges in $G_i$ increases with each iteration and $E(G_i) \subseteq E(B)$, at some point $G_i$ will be equal to $B$ and the algorithm will stop.

Note that the sequence $(F_0, \dotsc, F_s)$ fully describes a run of the algorithm. 
We call it a \emph{grow sequence} for $B$ and each $F_i$ in
it a \emph{step} of the sequence, $0 \leq i \leq s$. 
Given some grow sequence $S := (F_0, \dotsc, F_s)$ for $B$ we can easily reconstruct $B$ as the union of all $F_i$, $0 \leq i \leq s$.
We now turn to the question of how to enumerate such sequences efficiently.

Let us fix an arbitrary labeling of the vertices of $F$, say $V(F) = \{w_1, \ldots, w_{v(F)}\}$. Every $F$-copy in $B$ can be specified by an injective mapping $f: V(F) \rightarrow V(B)$, thus we can represent every $F$-copy in $B$ as a $v(F)$-tuple of vertices of $B$ where the $i$-th element of the tuple determines $f(w_i)$, for $1 \le i \le v(F)$. Accordingly, we could represent every grow sequence as a sequences of $v(F)$-tuples of vertices in $V(B)$. Unfortunately,  such an encoding is still too inefficient. We improve on this by using the fact that every $F$-copy $F_i$ from a grow sequence $(F_0, \ldots, F_s)$
has a non-empty intersection with $F_0 \cup \ldots \cup F_{i-1}$ .
We now make this more precise.
 %

We distinguish three step types. We call $F_0$ the \emph{first}
step.
For $i \geq 1$ we call the step $F_i$ \emph{regular} if the intersecting subgraph $G_{i-1} \cap F_i$ corresponds to exactly one edge, and \emph{degenerate} otherwise. 
In the first moment argument that we elaborate on below we choose the type of each step (regular or degenerate). For each type we then have to multiply the number of choices by the probability that the new edges (the edges in $E(F_i)\setminus E(G_{i-1})$) are present in $G$.

For a regular step $F_i$ created in line \ref{line:open-case}, the intersection with $G_{i-1}$ corresponds exactly to a non-closed edge $e$ in  $F_j$, where $j<i$ is the smallest index $j < i$ such that $F_j$ is not closed. Note that the index $j$ can be uniquely reconstructed from the graph $G_i$. That is, we do not have to choose it. This edge can be chosen in $e(F)$ ways. Furthermore, we have to choose which vertices in $F_i$ correspond to these vertices, giving another factor of $v(F)^\ell$. It remains to choose the other $v(F)-\ell$ new vertices of $F_i$, which in turn describe the $e(F)-1$ new edges that are required to be present. The total contribution of such a step is thus
\begin{equation}\label{eq:regular-open}
  e(F)v(F)^{\ell} n^{v(F)-\ell}p^{e(F)-1} \leq e(F)v(F)^{\ell} c^{e(F)-1} \leq c < 1,
\end{equation}
where $c$ is the constant in Theorem~\ref{lemma:main} which we
choose small enough for the above to hold.

In contrast to regular steps created in line \ref{line:open-case}, if a regular step $F_i$ is created in line \ref{line:closed-case} then a copy $F_j$ which contains an  intersecting edge of $F_i$ and $G_{i-1}$ is not fully determined by $G_{i-1}$ and we need to choose
it. By construction, the $\ell$-graph $G_{i-1}$ contains at most $v(F)\cdot i$
 vertices, thus there are at most $(v(F) \cdot i)^\ell$ choices for the vertices in the attachment edge in $G_{i-1}$ and the contribution of such a step is
\begin{equation}\label{eq:regular-closed}
  e(F) (v(F)\cdot i)^{\ell} n^{v(F)-\ell}p^{e(F)-1} \stackrel{\eqref{eq:regular-open}}{\le}  i^{\ell},
\end{equation}
again using the assumptions on the choice of $c$ in $(\ref{eq:regular-open})$. 

Now consider the case of degenerate steps, i.e.\ those for which \(H := F_i
\cap G_{i-1}\) satisfies $v(H) > \ell$. We can choose which vertices of $G_{i-1}$ correspond to $H$ in $(v(F) \cdot i)^{v(H)}$ many ways. Furthermore, recall that $F$ is strictly
$\ell$-balanced, so for any subgraph $H \subsetneq F$ with $v(H) > \ell$ we
have
\begin{equation*}
  \frac{e(H)-1}{v(H) - \ell} < \frac{e(F)- 1}{v(F)-\ell} = m_\ell(F)
\end{equation*}
and thus
\begin{equation}\label{eq:degenerate-hurts}
    \frac{e(F)-e(H)}{v(F)-v(H)} = \frac{(e(F)-1) - (e(H)-1)}{(v(F)-\ell) -
    (v(H)-\ell)} > m_\ell(F).
\end{equation}
This implies that we can choose a constant $\alpha > 0$ such that
for all $H \subsetneq F$ with $v(H) > \ell$ it holds that
\begin{equation*}
  v(F) - v(H) - \frac{e(F) - e(H)}{m_\ell(F)} < -\alpha.
\end{equation*}
Applying this to a degenerate step $F_i$, we obtain that the contribution is upper-bounded by 
\begin{align}\label{eq:degenerate}
   \sum_{\substack{H\subsetneq F\\v(H) > \ell}}(v(F)\cdot i)^{v(H)}
  n^{v(F)-v(H)}p^{e(F) - e(H)} \nonumber  
  & \leq
  i^{v(F)} \cdot c n^{-\alpha} \sum_{\substack{H\subsetneq F\\v(H) > \ell}}v(F)^{v(H)}   \nonumber
  \\
  & \leq i^{v(F)} \cdot c n^{-\alpha} \cdot v(F)^{v(F)} 2^{v(F)^2} \\
  & \le i^{v(F)} n^{-\alpha},\nonumber
\end{align}
where we again assume that $c$ is chosen small enough for the above to hold.

Thus, degenerate steps introduce a factor $i^{v(F)}n^{-\alpha}$, which suggests that sequences containing (constantly) many of them are very unlikely to appear in $G$. Similarly, regular steps created in line~\ref{line:open-case} introduce a factor of $c<1$, which suggests that sequences containing $\Theta(\log n)$ of these steps are also unlikely to appear in $G$.
The next claim provides bounds on the number of degenerate and regular steps created in line \ref{line:closed-case} that will allow us to conclude the proof.

\begin{claim}\label{claim:open-edges-bound}
  Let $S = (F_0, \ldots, F_s)$ be a grow sequence corresponding to an execution of  Algorithm~\ref{algo:grow-sequence-algo}. Then the following holds:
  \begin{enumerate}[(a)]
	 \item If $S$ contains at most $d$ degenerate steps, then $s \leq 3d \cdot v(F)$. \label{item:claim-open-edges-bound-1}
	 \item If a prefix $S'$ of $S$ contains at most $d$ degenerate steps, then every regular step $F_j$ in $S'$, with $j \ge 3d\cdot v(F) + 2$, is created in line \ref{line:open-case}. \label{item:claim-open-edges-bound-2}
  \end{enumerate}
\end{claim}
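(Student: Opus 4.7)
The plan is to prove both parts via a double-counting argument that exploits the monotonicity of closedness under edge addition. I would first observe that once an edge $e$ has at least two $F$-copies through it (and, in the $\gamma(F)<\ell-1$ case, no $F$-equivalent partner), it remains closed in every subsequent $G_j$, so the index $j$ chosen in line~\ref{line:choose_e} of Algorithm~\ref{algo:grow-sequence-algo} is non-decreasing along the execution. A regular step created in line~\ref{line:open-case} closes exactly the attachment edge $e$ of $F_j$, since the other $e(F)-1$ edges of $F_i$ are new and belong only to $F_i$ at the moment of addition. A degenerate step $F_i$ with intersection $H_i = F_i \cap G_{i-1}$ shares $e(H_i)\ge 1$ edges (potentially closing all of them at once) but adds only $e(F)-e(H_i)$ new edges.

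For part~(a), set $\Delta := \sum_{i \text{ degenerate}}(e(H_i)-1)$. Then $e(G_s) = e(F) + s(e(F)-1) - \Delta$ while $\sum_{e\in E(G_s)} m_e^{\textup{seq}} = (s+1)e(F)$, where $m_e^{\textup{seq}}$ counts sequence copies containing $e$, and combining these yields the identity $\sum_e(m_e^{\textup{seq}}-1) = s + \Delta$. Since $G_s=B$ is $F$-closed, each sequence copy has at least $3$ closed edges in $B$, giving the lower bound $\sum_{e \text{ closed}} m_e \ge 3(s+1)$ (with $m_e$ now counting all $F$-copies of $B$ through $e$). Together with $m_e \ge 2$ for closed edges, these inequalities force $\Delta \ge (s+3)/2$. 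Finally, the strict $\ell$-balancedness of $F$ gives, for each degenerate $H_i\subsetneq F$ with $v(H_i)>\ell$, the density bound $(e(H_i)-1)/(v(H_i)-\ell) < (e(F)-1)/(v(F)-\ell)$; since $v(H_i)\le v(F)$, summing this bound over all degenerate steps should produce the required $s \le 3d\cdot v(F)$.

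Part~(b) will follow from part~(a) by contradiction applied to a prefix. Suppose that $F_j$ with $j\ge 3dv(F)+2$ were a regular step created in line~\ref{line:closed-case}. Then the if-condition of Algorithm~\ref{algo:grow-sequence-algo} forces every $F_k$ with $k<j$ to be closed in $G_{j-1}$, so $G_{j-1}$ is $F$-closed; because its edge set is the union of the $F_k$'s, which are pairwise connected via the attachment edges, Lemma~\ref{lemma:block_split} implies $G_{j-1}$ is in fact a single $F$-block. Hence $(F_0,\dotsc,F_{j-1})$ is a valid grow sequence for $G_{j-1}$, and applying part~(a) gives $j-1\le 3dv(F)$, contradicting $j\ge 3dv(F)+2$.

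The main obstacle is obtaining the exact constant~$3$ in the bound. Combining $\Delta \ge (s+3)/2$ with the crude estimate $e(H_i)-1\le e(F)-1$ only gives $s\le 2d(e(F)-1)-3$, which is weaker than $3dv(F)$ for dense $F$; tightening this to $3dv(F)$ genuinely requires the strict $\ell$-balanced density inequality above, summed carefully across degenerate steps. A secondary complication is the $\gamma(F)<\ell-1$ regime, where closedness involves the non-existence of an $F$-equivalent partner and is not transparently monotone; in particular, to conclude that a line~\ref{line:open-case} step actually makes the attachment edge $e$ closed in $G_i$, one must check the $F$-equivalence condition. The case split in Theorem~\ref{lemma:main} (``$F$ has exactly three edges and $\gamma(F)=\ell-1$, or $F$ contains at least $4$ edges'') is presumably calibrated precisely to make these checks routine.
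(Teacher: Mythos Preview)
Your double-counting route to part~(a) has a genuine gap at the step deriving $\Delta \ge (s+3)/2$. The identity $\sum_e (m_e^{\textup{seq}}-1) = s+\Delta$ lives in the world of \emph{sequence} copies, while the bound $m_e \ge 2$ for closed edges refers to \emph{all} $F$-copies in $B$; these counts differ, since the grow sequence stops once $G_s = B$ as an edge set and need not enumerate every $F$-copy. An edge closed in $B$ can perfectly well satisfy $m_e^{\textup{seq}} = 1$. Without $m_e^{\textup{seq}} \ge 2$ on closed edges you cannot get the lower bound $s+\Delta \ge C$ (with $C$ the number of closed edges) that your averaging needs, and the two displayed inequalities no longer force anything on $\Delta$. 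Even granting $\Delta \ge (s+3)/2$, your own observation stands: summing the strict-balancedness inequality only yields $\Delta < d\cdot m_\ell(F)(v(F)-\ell) = d(e(F)-1)$, which is the same crude estimate, so you recover at best $s \le 2d(e(F)-1)$; for dense $F$ this is far weaker than $3d\,v(F)$. An edge-counting argument naturally produces $e(F)$-scale bounds, and there is no evident way to squeeze a $v(F)$-scale bound out of it.

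The paper's proof is entirely different and vertex-based. It introduces the notion of a \emph{fully-open} regular step $F_j$ in a prefix $S_i$: one whose inner vertices $V(F_j)\setminus V(G_{j-1})$ have not been touched by any later step $F_{j+1},\ldots,F_i$. Using Claim~\ref{claim:tilde-F-is-F-f+e} one shows that the non-attachment edges of a fully-open step are pairwise $F$-equivalent in $G_i$, hence not closed; since $B$ is $F$-closed this forces $\fo(S_s)=0$. The engine is the potential inequality $\fo(S_i) \ge \reg(S_i)/2 - \deg(S_i)\cdot v(F)$, obtained by bounding how many fully-open predecessors a single step can destroy: a regular step at most one (and two consecutive regular steps jointly at most one), while a degenerate step at most $v(F)-\ell+1$, because its vertices can meet the inner-vertex sets of only that many fully-open steps. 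Setting $\fo(S_s)=0$ gives $d\cdot v(F) \ge (s-d)/2$ and hence $s \le 3d\,v(F)$. Part~(b) follows by applying the same potential inequality directly to the prefix $S_{j-1}$, where the else-branch guarantees $\fo(S_{j-1})=0$; one does not need to argue that $G_{j-1}$ is an $F$-block or that the prefix is itself a grow sequence for it---which, incidentally, is not obvious, since the choice in line~\ref{line:choose_e} depends on the target $B$ rather than on $G_{j-1}$.
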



Intuitively, what Claim \ref{claim:open-edges-bound} tells us is that in a long grow sequence either there will be
many degenerate steps or most of the steps will be regular steps created in line \ref{line:open-case}.
Note that every degenerate step, as Equation~\eqref{eq:degenerate} shows, introduces a factor of $\Theta(n^{-\alpha+o(1)})$ to the expectation of the number of appearances of $S$ (for  $i=O(\log n)$) and regular step created in line \ref{line:open-case}  introduces a constant factor $c < 1$. We defer the formal proof of Claim \ref{claim:open-edges-bound} to the next section.



With the help of Claim \ref{claim:open-edges-bound} we can now finish our first moment argument. Set $\dmax := v(F)/\alpha + 1$
and $L := 3\dmax v(F) + 1$ and let $S = (F_0, \ldots, F_s)$ be a grow sequence of length more than $L$. By Claim~\ref{claim:open-edges-bound}\eqref{item:claim-open-edges-bound-1} every such sequence $S$
 must contain at least $\dmax$ degenerate steps. 
We now distinguish two cases. Let $s_d$ be the step in which the $\dmax$-th degenerate step occurs in $S$.
If $s_d <s_{\max}$, where $s_{\max} := v(F)\log n + \dmax+L$, then we set $S' := (F_0, \ldots, F_{s_d})$. Otherwise, we set $S' := (F_0, \ldots, F_{s_{\max}})$.
We prove that in both cases the expected number of possible grow sequences $S$ longer than $L$ which have a prefix $S'$ is $o(1)$. 

Observe that, in any case, $S'$ is a prefix of $S$ that contains at most $\dmax$ degenerate steps. Then, by Claim~\ref{claim:open-edges-bound}\eqref{item:claim-open-edges-bound-2}, if $F_i$ is a regular step from $S'$ created in line \ref{line:closed-case}, we have $i \le L$. Let us first consider the case when the $d_{\max}$-th degenerate step occurs before  step~$s_{\max}$, that is $s_d \in \{d_{\max}, \ldots, s_{\max} - 1\}$. For a fixed such $s_d$ there are $\binom{s_d-1}{d_{max} - 1}$ ways to choose steps in which the first $d_{\max} - 1$ degenerate steps have occured. We can now upper bound the expected number of such sequences $S'$ as follows
\begin{multline*}
  \sum_{s_d = d_{\max}}^{s_{\max}-1} \binoms{s_d - 1}{\dmax - 1}  n^{v(F)}
  \bigl( \underbrace{\strut  s_d^{v(F)}n^{-\alpha}}_{ \text{eq. }\eqref{eq:degenerate}}\bigr)^{\dmax}
  \bigl(\underbrace{(
  L)^{\ell}}_{\text{eq. } \eqref{eq:regular-closed}}\bigr)^{L} = \polylog(n) \cdot n^{v(F)}n^{-\alpha\cdot\dmax} = o(1).
\end{multline*}
Here we bound the contribution of the first step by $n^{v(F)}$, drop the
contribution of $c < 1$ for all regular steps created in line \ref{line:open-case}, and use the fact that only
the first $L + 1$ steps can be regular steps created in line \ref{line:closed-case}. 

Let us now consider the case $s_d \ge s_{\max}$. Note that then there are $d \in \{0, \ldots, \dmax\}$ degenerate steps within the first $s_{\max}$ steps. Similarly as in the previous case, we can upper bound the expected number of such sequences $S'$ as follows:
\begin{multline*}
  \sum_{d = 0}^{\dmax}\binoms{s_{\max}}{d} n^{v(F)}
  \bigl( \underbrace{\strut  s_{\max}^{v(F)}n^{-\alpha}}_{ \text{eq. }\eqref{eq:degenerate}}\bigr)^{d}
 \bigl(\underbrace{(
  L)^{\ell}}_{\text{eq. } \eqref{eq:regular-closed}}\bigr)^{L} 
  \underbrace{\strut c^{s_{\max}-d-L}}_{\text{eq. } \eqref{eq:regular-open}} \\= \polylog(n)\cdot n^{v(F)}\cdot c^{s_{\max}-d-L} =
  \polylog(n)\cdot
  2^{v(F) \log n }
  c^{v(F)\log n}
   = o(1),
\end{multline*}
where we used the fact that $c$ is small enough and in particular smaller than $1/2$.  

We can now conclude that the probability that $G$ contains a possible grow sequence  $S$ of length longer than $L$ as follows
$$ \Pr[S \text{ of length at least } L] \le \Pr[S \text{ contains a prefix $S'$ as described}] = o(1),$$
where the last inequality follows from Markov's inequality. Thus, with probability $1-o(1)$, every $F$-block in $G$ 
contains at most $v(F)\cdot (L+1)$ vertices.
\end{proof} 


\subsubsection{Proof of Claim~\ref{claim:open-edges-bound}}
\label{sec:proof-open-edges-lemma}

Let $S_i := (F_0, \ldots, F_i)$, for $0 \leq i \leq s$. For any $S_i$ and any
regular step $F_j$, $j \leq i$ we call the
edge $e := E(G_{j-1}) \cap E(F_j)$ the \emph{attachment edge} of $F_j$ and
the vertices in $V(F_{j}) \setminus V(G_{j-1})$ the \emph{inner vertices} of
$F_j$. For $j\le i$, we say that a regular step $F_j$ is \emph{fully-open} in $S_i$ if $\bigcup_{j' = j + 1}^i V(F_{j'})$ does not contain any inner vertex of $F_j$ (i.e., the inner vertices of $F_j$ have not been touched by any of the copies $F_{j+1},\ldots,F_i$). The first step $F_0$ is always fully-open by definition, and all its vertices are inner. Finally, we denote by 
$\reg(S_i)$, $\deg(S_i)$ and $\fo(S_i)$ the number of regular, degenerate and fully-open steps in $S_i$. 

It follows from the definition that a newly added regular step $F_i$ is fully-open in $S_i$. Next, we show a series of claims which will be used later in the proof of Claim~\ref{claim:open-edges-bound}. 

\begin{claim}\label{claim:tilde-F-is-F-f+e}
  Let $F$ be a strictly $\ell$-balanced $\ell$-graph with at least three edges. Furthermore, let $G$ be an arbitrary $\ell$-graph and $e\in E(G)$ an edge in $G$. Let $F_e$ be an $F$-copy such that  $G \cap F_e = (e,\{e\})$. 
%
  Then all $F$-copies $\tilde F$ in $G^+ := G \cup F_e$ which are
  not contained in $G$ have the form
  \begin{equation*}
    \tilde F = F_e - e + \tilde e := \bigl((V(F_e) \setminus
    e) \cup \tilde e, (E(F_e) \setminus \{e\}) \cup \{\tilde e\}\bigr),
  \end{equation*}
  where $\tilde e \in E(G)$ and $\abs{\tilde e \cap e} > \gamma(F)$,
  cf.~Figure~\ref{fig:tilde-F-is-F-f+e}.
\end{claim}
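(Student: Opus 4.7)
The proof rests on the observation that the attachment $G\cap F_e=(e,\{e\})$ forces any $F$-copy $\tilde F$ in $G^+$ which is not contained in $G$ to include vertices in $V(F_e)\setminus e$ that are unreachable from $V(G)\setminus e$; hence every edge of $\tilde F$ incident to such a vertex must lie entirely in $F_e$. The plan is to combine this rigidity with strict $\ell$-balancedness of $F$ to show that $\tilde F$ must coincide with $F_e$ on all edges except (at most) $e$.

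Write $B=E(\tilde F)\cap E(F_e)$, $A'=E(\tilde F)\setminus E(F_e)$ and $k=\abs{A'}$, and partition $V(\tilde F)=X\cup Z\cup V'$, where $X=V(\tilde F)\cap(V(F_e)\setminus e)$, $Z=V(\tilde F)\cap e$ and $V'=V(\tilde F)\setminus V(F_e)$. Since $A'\subseteq E(G)\setminus\{e\}$ has all vertices in $V(G)$, no edge of $A'$ touches $X$, and the assumption $\tilde F\not\subseteq G$ forces $\abs{X}\geq 1$. Viewing $A'$ and $B$ as subhypergraphs of $\tilde F\cong F$, strict $\ell$-balancedness yields $m_\ell(F)\abs{V'}<k<1+m_\ell(F)(\abs{Z}+\abs{V'}-\ell)$: the upper bound uses $V(A')\subseteq Z\cup V'$, while the lower bound comes from $V(B)\subseteq X\cup Z$ via the complementary form $(e(F)-e(H))/(v(F)-v(H))>m_\ell(F)$ of strict balancedness. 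Subtracting yields $\abs{Z}>\ell-1/m_\ell(F)$, and the key inequality $m_\ell(F)>1/(\ell-\gamma(F))$---obtained by applying strict balancedness to a two-edge subhypergraph of $F$ realising $\gamma(F)$, which is where the hypothesis $e(F)\geq 3$ enters---then gives $\abs{Z}>\gamma(F)$. Since the above interval for $k$ has length at most $1$, and since the partition $(X,Z,V')$ corresponds via the isomorphism $\tilde F\cong F$ to a genuine separator decomposition of $V(F)$ that strict balancedness severely restricts, one concludes $k\in\{0,1\}$; the case $k=0$ gives $\tilde F=F_e$ with $\tilde e=e$.

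For $k=1$, write $\tilde F=F_e-f+\tilde e$ for some $f\in E(F_e)$ and $\tilde e\in E(G)\setminus\{e\}$. To show $f=e$, suppose for contradiction $f\neq e$; then $e$ remains an edge of $\tilde F$, and since $\tilde e$ and $e$ are two distinct edges of $\tilde F\cong F$, one obtains $\abs{\tilde e\cap e}\leq\gamma(F)$. On the other hand, letting $T_f$ denote the set of pendant vertices of $f$ in $F_e$, the identity $V(\tilde F)=(V(F_e)\setminus T_f)\cup\tilde e$ together with $v(\tilde F)=v(F)$ and $\tilde e\cap V(F_e)\subseteq e$ yields the vertex-count identity $\abs{\tilde e\cap e}\geq\ell-\abs{T_f}$. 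Strict balancedness applied to the proper subhypergraph $F_e-f-T_f$ gives $\abs{T_f}<1/m_\ell(F)<\ell-\gamma(F)$, whence $\abs{\tilde e\cap e}>\gamma(F)$, a contradiction. Hence $f=e$, and the same vertex-counting and balancedness estimate, now applied with $T_e$ in place of $T_f$, yields the required $\abs{\tilde e\cap e}>\gamma(F)$. The main obstacle will be the step $k\leq 1$: the density bounds alone leave $k$ in a narrow range, and closing that range to $\{0,1\}$ requires combining them with the structural separator constraint on $F$ induced by the partition $X\cup Z\cup V'$.
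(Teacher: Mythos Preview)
Your setup with the partition $V(\tilde F)=X\cup Z\cup V'$ and the edge sets $A'=E(\tilde F)\setminus E(F_e)$, $B=E(\tilde F)\cap E(F_e)$ is sound, and your derivation that the open interval for $k$ has length at most $1$ as well as the inequality $\abs{Z}>\gamma(F)$ are both correct. The gap is exactly where you flag it: nothing you have written forces this interval to sit near the origin. The lower endpoint is $m_\ell(F)\abs{V'}$, and $\abs{V'}$ is not bounded a priori; for large $\abs{V'}$ the interval could be, say, $(6.3,\,7.1)$, perfectly compatible with $k=7$. The appeal to a ``separator decomposition that strict balancedness severely restricts'' is not a proof, and in fact strict $\ell$-balancedness does \emph{not} prevent $F$ from having vertex separators of size $>\gamma(F)$ separating two large pieces. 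So as written the argument does not establish $k\le 1$.

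The paper closes this gap with a small but decisive trick: instead of comparing $A'$ and $B$, it compares the induced pieces $\tilde F_{\mathrm{old}}:=\tilde F[V(G)]$ and $\tilde F_{\mathrm{new}}^{+e}:=\tilde F[V(F_e)]+e$, where the edge $e$ is \emph{artificially added} to the second piece regardless of whether $e\in E(\tilde F)$. One checks $e(\tilde F_{\mathrm{old}})+e(\tilde F_{\mathrm{new}}^{+e})=e(F)+1$ and $v(\tilde F_{\mathrm{old}})+v(\tilde F_{\mathrm{new}}^{+e})\le v(F)+\ell$; the assumption ``there is a second edge of $\tilde F$ outside $E(F_e)\setminus\{e\}$'' gives $e(\tilde F_{\mathrm{old}})\ge 2$, and automatically $e(\tilde F_{\mathrm{new}}^{+e})\ge 2$ as well (this is precisely what the $+e$ buys). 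Now both pieces are proper subgraphs of $F$ with more than $\ell$ vertices, so strict balancedness gives $d_\ell(\tilde F_{\mathrm{old}})<m_\ell(F)$ and $d_\ell(\tilde F_{\mathrm{new}}^{+e})<m_\ell(F)$, whence the mediant
\[
\frac{e(\tilde F_{\mathrm{old}})-1+e(\tilde F_{\mathrm{new}}^{+e})-1}{v(\tilde F_{\mathrm{old}})-\ell+v(\tilde F_{\mathrm{new}}^{+e})-\ell}<m_\ell(F),
\]
while the left side is $\ge (e(F)-1)/(v(F)-\ell)=m_\ell(F)$: contradiction. Note this single argument simultaneously rules out your case $k\ge 2$ \emph{and} your case $k=1,\ f\neq e$ (both correspond to $e(\tilde F_{\mathrm{old}})\ge 2$), so your separate treatment of $f=e$ via pendant vertices, though correct, becomes unnecessary. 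Finally, for the bound $\abs{\tilde e\cap e}>\gamma(F)$ the paper applies strict balancedness directly to $H=\tilde F\setminus(\tilde e\setminus e)$, which is shorter than your route through $T_f$.
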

\begin{figure}[h]
  \centering
  \begin{tikzpicture}[scale=0.7]
    \draw[dotted] (-2, -0.4) .. controls (-1.3,1) and (0.3,0.9) .. (0.3, 0.9);
    \draw[dotted] (9, -0.4) .. controls (8.3,1) and (6.7,0.9) .. (6.7, 0.9);
    \fill[rounded corners=5pt,fill=white,draw=black] (0,0) rectangle (7,1);
    \draw (0,0.7) .. controls (0,5) and (7,5) .. (7,0.7);
    \draw[dashed] (0.2,0.8) .. controls (0.2,4.8) and (6.8,4.8) .. (6.8,0.8);
    \draw[dashed, rounded corners=5pt] (3, 0.8) -- (6.8, 0.8) -- (6.8, -1) -- (5, -1) -- (5, 0.2) -- (3,0.2) -- cycle;
    \draw[dashed] (0.2, 0.8) -- (3,0.8);
    \node (G) at (-1.5,-0.4) {$G$};
    \node (e) at (1.5,0.4) {$e$};
    \node (et) at (5.9,-0.5) {$\tilde e$};
    \node (Ft) at (3.5, 2.2) {$F_e - e + \tilde e$};
  \end{tikzpicture}
  \caption{The possible copies of $F$ created in a regular step. The solid
    lines represent $F_e$, the dashed ones $\tilde F$.}
  \label{fig:tilde-F-is-F-f+e}
\end{figure}
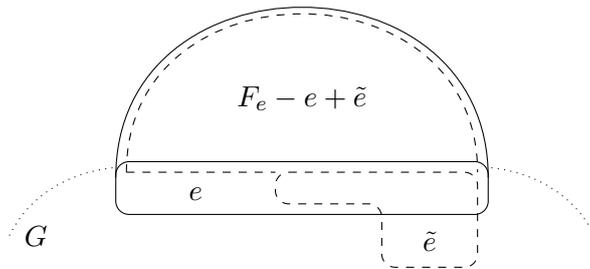
\begin{proof}
  \newcommand{\new}{\text{new}}
  \newcommand{\old}{\text{old}}
  Let $\tilde F$ be some $F$-copy in $G^+$ which is not fully
  contained in $G$. If $\tilde F = F_e$, then the lemma is true for $\tilde e =
  e$, so we assume $\tilde F \neq F_e$.

  Let $\tilde e$ be an arbitrary edge of $\tilde F$ which is not contained
  in $E(F_e)$. Note that this implies $\tilde e \in E(G)$.

  First we show that $E(\tilde F) \setminus \{\tilde e\}$ must be contained
  in $E(F_e)\setminus \{e\}$, which implies that the two sets are
  equal. Assume this is not true. Set $\tilde F_\new := \tilde F[V(F_e)]$,
  $\tilde F_\old := \tilde F[V(G)]$ and $\tilde F_{\new}^{+e} = \tilde F_\new + e$.  As we assumed that \(E(\tilde F)\setminus \{\tilde e\}
  \nsubseteq E(F_e)\setminus\{e\}\) we know that  \(\tilde F\) must contain an
  edge different from \(\tilde e\) that is not contained in 
  $E(F_e) \setminus \{e\}$,
  and is thus contained in \(E(G)\). This implies that
   $e(\tilde
  F_{\old}) \geq 2$. As \(\tilde F\) is not fully contained in \(G\) it must
  contain at least one edge of \(E(F_e) \setminus E(G)\), which in turn implies that \(e(\tilde
  F_\new^{+e}) \geq 2\). 
Subgraph $\tilde F_\old$ is a strict subgraph of $F$ as $\tilde F$ is not fully contained in $G$.
Moreover, $\tilde F_\new^{+e}$ is also a strict subgraph of $F$ as by definition
 $E(\tilde F_\new^{+e}) \subseteq E(F_e)$ and $|E(\tilde F_\new^{+e})| < |E(F_e)|$.

  One easily checks that regardless of whether \(e\) is  an edge of \(\tilde F_{\new}\) or not we have
  \begin{equation*}
    e(\tilde F) = e(\tilde F_\old) + e(\tilde F_\new^{+e}) - 1
    \quad\text{and}\quad
    v(\tilde F) \geq v(\tilde F_\old) + v(\tilde F_\new^{+e}) - \ell.
  \end{equation*}
  Thus
  \begin{equation*}
    m_\ell(\tilde F) = \frac{e(\tilde F)-1}{v(\tilde F)-\ell} \leq
    \frac{e(\tilde F_\old) - 1 + e(\tilde F_\new^{+e}) -1}{v(\tilde F_\old) -
      \ell + v(\tilde F_\new^{+e}) - \ell} < m_\ell(F),
  \end{equation*}
  which is a contradiction,  as $\tilde F$ is an $F$-copy. (Here the last inequality follows from the fact that $F$ is strictly $\ell$-balanced and $\tilde F_\new^{+e}, \tilde F_\old \subsetneq \tilde F$ are copies of a proper subgraph of $F$, each with at least $\ell+1$ vertices. ) Hence, our assumption $E(\tilde F) \setminus \{\tilde e\}\neq E(F_e)\setminus \{e\}$ is not valid.

  It remains to show that $\abs{\tilde e \cap e} > \gamma(F)$. 
  Let $X := \tilde{e} \setminus e$ and assume $\abs X \geq \ell - \gamma(F)$, i.e.
 $ \abs{\tilde e \cap e} \leq \gamma(F)$. As
  $\tilde F \setminus  \{ \tilde e\} = F_e \setminus \{e\}$ we know that
  no edge of $\tilde F$, except $\tilde e$, can contain a vertex in $X$.
  Let $H := \tilde F \setminus X$. By the previous observation we have
  $$
  v(H) = v(\tilde F) - \abs{X}\ge \ell+1 \quad \text{and} \quad e(H) = e(\tilde F) - 1\ge 2,
  $$
  thus 
  \begin{equation}\label{eq:claim9}
  m_\ell(H) \geq \frac{e(H) - 1}{v(H) - \ell} = \frac{e(\tilde F) - 1 - 1}{v(\tilde F) - \abs{X} - \ell} \geq 
  \frac{e(\tilde F) - 1 - 1}{v(\tilde F) - \ell - (\ell - \gamma(F))}, 
  \end{equation}
  where the last inequality holds because of the assumption on $X$.  We have $m_\ell(F) = \frac{e(\tilde F)-1}{v(\tilde F)-\ell}$ by the assumptions on $F$ being strictly $\ell$-balanced and  $m_\ell(F) \ge  \frac{1}{\ell - \gamma(F)}$ by the definition of $\gamma(F)$. Inequality $(\ref{eq:claim9})$ thus implies that $m_\ell(H) \ge m_\ell(F)$, which is a contradiction as $ H$ is a copy of a proper subgraph of $\tilde F$ with more than one edge. Thus we have $|\tilde e \cap e| > \gamma(F)$, as desired.
%
%
%
%
%
\end{proof}

Note that Claim~\ref{claim:tilde-F-is-F-f+e} implies that for $\ell$-graphs $F$ with $\gamma(F)=\ell-1$ (and in particular for {\em graphs}) we have that $G^+ = G \cup F_e$ does not contain {\em any} $F$-copy that intersects both $F_e\setminus e$ {\em and} $G\setminus e$. For these $\ell$-graphs the following claim is thus straightforward while for all other $\ell$-graphs it needs a small argument.

\begin{claim}\label{claim:regular-fully-open}
  Let $1\le j \le i$ and $F_j$ be a fully-open step in $S_i$. Let $e_j\in E(F_j)$ denote the attachment edge of $F_j$. Then any two distinct edges $e, e' \in E(F_j)\setminus\{e_j\}$ of $F_j$ are $F$-equivalent in $G_i$. 
\end{claim}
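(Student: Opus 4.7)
The plan is to reduce this to Claim~\ref{claim:tilde-F-is-F-f+e} by cleanly separating $F_j$ from the rest of $G_i$. The key observation is that the fully-open hypothesis makes the inner vertices of $F_j$ look locally like they only belong to $F_j$: no later copy $F_{j'}$ (with $j'>j$) touches them by definition, and no earlier copy $F_{j'}$ (with $j'<j$) touches them since those lie in $G_{j-1}$ whose intersection with $F_j$ is only the vertices of $e_j$ (regularity of $F_j$).

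Concretely, I will set $V_{\mathrm{inner}} := V(F_j)\setminus e_j$ and $\tilde G := G_i[V(G_i)\setminus V_{\mathrm{inner}}]$. By the observation above, $\tilde G = \bigcup_{k \in \{0,\dots,i\}\setminus\{j\}} F_k$, and hence $V(\tilde G)\cap V(F_j) = e_j$ and $E(\tilde G)\cap E(F_j) = \{e_j\}$ (the only edge of $F_j$ avoiding $V_{\mathrm{inner}}$ is $e_j$ itself, and $e_j\in E(G_{j-1})\subseteq E(\tilde G)$). Therefore $\tilde G \cap F_j = (e_j,\{e_j\})$ and $G_i = \tilde G \cup F_j$, which is exactly the setup of Claim~\ref{claim:tilde-F-is-F-f+e} with $G:=\tilde G$, $F_e := F_j$, $e := e_j$ (valid because $F$ is strictly $\ell$-balanced with at least three edges). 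The claim then yields that every $F$-copy $F'$ in $G_i$ that is not contained in $\tilde G$ has the form $F' = F_j - e_j + \tilde e$ for some $\tilde e \in E(\tilde G)$.

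Now I take any $F$-copy $F' \subseteq G_i$ containing $e$. Since $e\neq e_j$ and both are $\ell$-edges, $e\not\subseteq e_j$, so $e$ meets $V_{\mathrm{inner}}$; hence $e\notin E(\tilde G)$, so $F'$ is not contained in $\tilde G$. By the structural characterization, $E(F') = (E(F_j)\setminus\{e_j\})\cup\{\tilde e\}$, which contains all of $E(F_j)\setminus\{e_j\}$ and in particular $e'$. By symmetry, $e'\in E(F')$ forces $e\in E(F')$. Thus $e \equiv_F e'$ in $G_i$, as claimed.

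I expect the main (minor) obstacle to be careful bookkeeping in verifying that $\tilde G\cap F_j$ really reduces to the single edge $e_j$; this hinges on two inputs that are easy to state but must be spelled out: regularity of $F_j$ (which pins down $V(G_{j-1})\cap V(F_j)=e_j$) and the fully-open property (which pins down $V(F_{j'})\cap V(F_j)\subseteq e_j$ for $j'>j$). Once these are established, Claim~\ref{claim:tilde-F-is-F-f+e} does all the structural work and the $F$-equivalence follows immediately.
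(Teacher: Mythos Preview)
Your proof is correct and follows essentially the same approach as the paper: decompose $G_i$ as $F_j \cup \tilde G$ with $F_j \cap \tilde G = (e_j,\{e_j\})$, then apply Claim~\ref{claim:tilde-F-is-F-f+e} to see that every $F$-copy containing an edge of $E(F_j)\setminus\{e_j\}$ must contain all of $E(F_j)\setminus\{e_j\}$. The only difference is that you spell out in detail how regularity of $F_j$ and the fully-open hypothesis together guarantee the decomposition, whereas the paper simply asserts it in one line.
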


\begin{proof}
  As $F_j$ is fully-open in $S_i$ we know
that $G_i$ can be partitioned as $G_i = F_j \cup G_i'$ such that $F_j \cap G_i' = e_j$. From Claim~\ref{claim:tilde-F-is-F-f+e} we know that 
  any $F$-copy in $G_i$ which contains some edge $e\in E(F_j)\setminus\{e_j\}$ must also contain all other edges $e'\in E(F_j)\setminus\{e_j\}$, hence the claim follows.
  \end{proof}

For $i \geq 1$, let $\Delta(i)$ denote the number of fully-open copies
``destroyed'' by step $F_i$, i.e.\ let
\begin{equation*}
  \Delta(i) = \abs{\{j < i \mid \text{$F_j$ fully-open in $S_{i-1}$ but not in $S_i$}\}}.
\end{equation*}

\begin{claim} \label{claim:max_delta}
$$
\Delta(i) \le
\begin{cases}
1,&\mbox{if $F_i$ is a regular step}\\
v(F) - \ell +1,&\mbox{if $F_i$ is a degenerate step}. 
\end{cases}$$
\end{claim}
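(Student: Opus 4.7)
The plan is a unified counting argument leveraging a strong structural property of fully-open copies: if $F_j$ is fully-open in $S_{i-1}$, then by definition no vertex of its inner set $I_j := V(F_j) \setminus V(G_{j-1})$ appears in any $F_{j'}$ with $j < j' \le i-1$. Consequently, every edge of $G_{i-1}$ incident to a vertex $v \in I_j$ must itself be an edge of $F_j$ (this applies equally to $F_0$, whose vertices are all inner by convention). Moreover, the inner sets of distinct fully-open copies are pairwise disjoint, since an inner vertex is ``born'' in exactly one step of the sequence.

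Setting up the bookkeeping, let $F_{j_1}, \ldots, F_{j_D}$ enumerate the fully-open copies in $S_{i-1}$ that are destroyed by $F_i$, and for each $k$ pick a representative $v_k \in I_{j_k} \cap V(F_i)$, which exists by the definition of ``destroyed''. Since the inner vertices of $F_i$ are new to the sequence (hence not in $V(G_{i-1})$) while each $v_k$ lies in $V(G_{i-1})$, we have $v_k \in V(F_i) \cap V(G_{i-1}) = V(H)$. Disjointness of the inner sets makes the $v_k$ pairwise distinct, giving the naive bound $D \le v(H)$.

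The key improvement comes from any shared edge $f^* \in E(H)$, which is guaranteed to exist: both lines~\ref{line:open-case} and~\ref{line:closed-case} of Algorithm~\ref{algo:grow-sequence-algo} demand that $F_i$ intersect $G_{i-1}$ in at least one edge, so $|E(H)| \ge 1$ irrespective of the step type. I claim that at most one $v_k$ lies in $f^*$. Indeed, suppose $v_{k_1}, v_{k_2} \in f^*$ with $j_{k_1} < j_{k_2}$. Since $v_{k_1}$ appears only in $F_{j_{k_1}}$ among $F_0, \ldots, F_{i-1}$, the edge $f^* \in E(G_{i-1})$ must lie in $E(F_{j_{k_1}})$; but then $v_{k_2} \in f^* \subseteq V(F_{j_{k_1}}) \subseteq V(G_{j_{k_2}-1})$, contradicting $v_{k_2} \in I_{j_{k_2}}$. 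Thus the $\ell$ vertices of $f^*$ contain at most one $v_k$, so at least $\ell - 1$ elements of $V(H)$ lie outside $\{v_1, \ldots, v_D\}$, yielding $D \le v(H) - \ell + 1$.

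Substituting the two possibilities for $v(H)$ closes both cases: a regular step has $v(H) = \ell$, so $\Delta(i) \le 1$; a degenerate step has $v(H) \le v(F)$, so $\Delta(i) \le v(F) - \ell + 1$. The main obstacle, and the reason a unified argument works, is recognizing that a single shared edge already forces $\ell - 1$ ``slack'' vertices of $V(H)$ beyond the chosen representatives. Everything else is a direct unpacking of the definitions of inner vertex and fully-open, together with the observation that the algorithm always produces at least one shared edge.
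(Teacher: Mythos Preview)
Your proof is correct and follows essentially the same approach as the paper: both hinge on the two observations that (a) any edge of $G_{i-1}$ can contain inner vertices of at most one fully-open step, and (b) inner sets of distinct steps are pairwise disjoint. The paper treats the regular and degenerate cases separately, applying (a) to the attachment edge first and then (b) to the remaining vertices of $V(H)$; you package the same two facts into a single inequality $D \le v(H) - \ell + 1$ by choosing one representative $v_k$ per destroyed copy and showing that a fixed shared edge $f^*$ absorbs at most one of them. The arguments are interchangeable, with your presentation being slightly more streamlined.
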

\begin{proof}

Fix any edge $e \in E(G_{i-1})$ and 
let $F_t$, $t<i$, be a step with $e\in E(F_t)$. Note that such a step has to exist as $e\in E(G_{i-1})$. Assume $e$ contains an inner vertex of some step $F_j$, $j<i$, which is fully-open in $S_{i-1}$.
If $t>j$ then $F_t$ contains an inner vertex 
of $F_j$, which contradicts our assumption that $F_j$ is fully-open in $S_{i-1}$. If $t<j$ then some inner vertex of $F_j$ is contained in an edge of $F_t$, which contradicts the definition of inner vertices of $F_j$. It follows that $t=j$ and $e\in E(F_j)$.


This easily implies the first part of the claim. Indeed, let $F_i$ be a regular step and $e_i = F_i \cap G_{i-1}$ its attachment edge. From the previous observation we have that $e_i$ can contain inner vertices of at most one $F$-copy $F_j$ which is fully-open in $S_{i-1}$, thus $\Delta(i) \le 1$ as required.

Next, similarly as in the case of edges we show that any vertex $v \in V(G_{i-1})$ can be an inner vertex of at most one $F$-copy $F_j$ which is fully-open in $S_{i-1}$. Fix any vertex $v\in V(G_{i-1})$ and assume that $F_j$
 is fully-open in $S_{i-1}$ with $v$ being its inner vertex. Let $F_t$, $t < i$, be a step containing $v$. 
 Then, by the same argument as above, it can not be that $t < j$. 
 By the definition of fully-open, the set $\cup_{j' = j+1}^{i-1} V(F_j')$ does not contain any inner vertex of $F_j$. In particular, if $t > j$ then this also holds for $F_t$. Therefore, $v$ can be an inner vertex only of  step $F_j$.


We can now derive the second part of the claim. Let $F_i$ be a degenerate step and $e \in E(F_i \cap G_{i-1})$ an arbitrary edge of $F_i$ which exists in $G_{i-1}$. By the first observation we have that $e$ contains inner vertices of at most one fully-open step in $S_{i-1}$. By the second observation, every vertex $v \in V(F_i \cap G_{i-1}) \setminus V(e)$ is an inner vertex of at most one fully-open step in $S_{i-1}$. In total, the step $F_i$ can touch inner vertices of at most $v(F) - \ell + 1$ fully-open copies.
\end{proof}

\begin{claim}\label{claim:consecutive-regular-bound}
  Let $F_i$ and $F_{i+1}$ be consecutive regular steps. If $\Delta(i) = 1$ then $\Delta(i+1) = 0$.
\end{claim}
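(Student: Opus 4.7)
I would argue by contradiction: assume $\Delta(i+1) \ge 1$, so the attachment edge $e_{i+1}$ of the regular step $F_{i+1}$ contains an inner vertex of some fully-open copy $F_{j'}$ in $S_i$. By the same reasoning as in the proof of Claim~\ref{claim:max_delta}, any edge meeting an inner vertex of a fully-open copy must itself lie in that copy, so $e_{i+1} \in E(F_{j'}) \setminus \{e_{j'}\}$.

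First I would show that $F_i$ is not closed in $G_i$, which forces $F_{i+1}$ to be created via the open branch (line~\ref{line:open-case}). Since $F_i$ is fully-open in $S_i$, Claim~\ref{claim:regular-fully-open} gives that its non-attachment edges are pairwise $F$-equivalent in $G_i$, so in case~(b) they are not closed; in case~(a), Claim~\ref{claim:tilde-F-is-F-f+e} forces them to lie only in $F_i$ among $F$-copies of $G_i$, so they are again not closed. Hence $F_i$ has at most one closed edge (its attachment $e_i$) and cannot be a closed copy. Let $t'$ be the smallest non-closed index in $S_i$; then $e_{i+1}$ is a non-closed-in-$G_i$, closed-in-$B$ edge of $F_{t'}$. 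Applying Claim~\ref{claim:tilde-F-is-F-f+e} to the non-attachment edge $e_{i+1}$ of the fully-open $F_{j'}$ shows that $F_{t'}$ either equals $F_{j'}$ or has the form $F_{j'} - e_{j'} + \tilde e$.

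Next I would apply the analogous analysis to $F_i$: by $\Delta(i) = 1$ there is a unique fully-open $F_j \in S_{i-1}$ destroyed by $F_i$ with $e_i \in E(F_j) \setminus \{e_j\}$; letting $t$ be the smallest non-closed index in $S_{i-1}$, $F_t$ equals $F_j$ or has the form $F_j - e_j + \tilde e'$. A key observation is that $F_j$ remains non-closed in $G_i$: its non-attachment edges other than $e_i$ still meet only inner vertices of $F_j$ that no new step uses, so they remain in essentially the same sets of $F$-copies as before step~$i$ and stay non-closed. This yields $t' \le j$. A short case analysis on whether $t' = j'$ or $t' < j'$ now closes the argument. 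If $t' = j'$ and $t = j'$, then already in step~$i$ the algorithm would have had to pick $e_i$ as a non-attachment edge of the fully-open $F_{j'}$ (picking the attachment edge $e_{j'}$ cannot destroy any fully-open copy, so would contradict $\Delta(i) = 1$), which in turn would destroy $F_{j'}$ — contradicting that $F_{j'}$ is fully-open in $S_i$. If $t' = j'$ and $t < j'$, then $F_t$ was non-closed in $G_{i-1}$ but closed in $G_i$, so $e_i$ must be $F_t$'s only remaining non-closed edge; combining $e_i \in E(F_j)$ with the possible forms of $F_t$ and $F_{t'}$ from Claim~\ref{claim:tilde-F-is-F-f+e} eventually forces $F_{j'} = F_j$, contradicting that $F_j$ is not fully-open in $S_i$ while $F_{j'}$ is. The case $t' < j'$ is excluded by a similar comparison of indices, using the non-closedness of $F_j$ in $G_i$ and the minimality of $t'$.

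The main obstacle is the $\gamma(F) < \ell - 1$ setting: Claim~\ref{claim:tilde-F-is-F-f+e} then admits additional $F$-copies of the form $F_k - e_k + \tilde e$ which one must track, and closedness in sense~(b) is not monotonic in $G$, because newly added copies can both create and destroy $F$-equivalences between edges. Hence each step of the plan must be accompanied by a careful verification — leveraging Claim~\ref{claim:regular-fully-open} and the fully-open structure — that the critical edges still fail to be closed in sense~(b) after $F_i$ is added.
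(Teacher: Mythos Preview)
Your contradiction strategy can be made to work, but as written it is substantially overcomplicated and rests on two misconceptions that send you into an unnecessary case analysis.

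First, you do not need the dichotomy ``$F_{t'}$ equals $F_{j'}$ or has the form $F_{j'}-e_{j'}+\tilde e$'' (and similarly for $t,j$). The very argument from the proof of Claim~\ref{claim:max_delta} that you invoke at the start gives more: if an edge $e\in E(G_i)$ contains an inner vertex of a fully-open step $F_{j'}$ in $S_i$, then \emph{every} step $F_t$ with $e\in E(F_t)$ satisfies $t=j'$. Since the algorithm hands you $e_{i+1}\in E(F_{t'})$, this forces $t'=j'$ outright; likewise $t=j$. All of your sub-cases collapse.

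Second, your worry that ``closedness in sense~(b) is not monotonic'' is wrong. Adding $F_i$ cannot create a new $F$-equivalence $e''\equiv_F e$ for an edge $e$ that was already closed in $G_{i-1}$: such an $e$ lies in at least two $F$-copies in $G_{i-1}$, and none of those copies contain any new edge $e''\in E(F_i)\setminus\{e_i\}$. Hence every $F_{j''}$ with $j''<j$ that was closed in $G_{i-1}$ stays closed in $G_i$, so the smallest non-closed index after step $i$ is still at most $j$.

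With these two observations the paper's argument is direct and avoids contradiction entirely: since $F_j$ is fully-open in $S_{i-1}$, all its $e(F)-1\ge 2$ non-attachment edges are non-closed in $G_{i-1}$; one then shows that step $F_i$ closes \emph{exactly one} edge of $F_j$, so $F_j$ remains non-closed in $G_i$; by monotonicity the smallest non-closed index in $G_i$ is therefore $j$ again, whence $e_{i+1}\in E(F_j)$, and since $F_j$ is no longer fully-open in $S_i$ one gets $\Delta(i+1)=0$. The only real work is the italicised assertion, which is precisely the place where your hand-wave ``they remain in essentially the same sets of $F$-copies'' hides the content. The paper handles it by a short case split on $\gamma(F)$: if $\gamma(F)=\ell-1$, Claim~\ref{claim:tilde-F-is-F-f+e} says the only new $F$-copy in $G_i$ is $F_i$ itself, so edges of $F_j$ other than $e_i$ gain no new copies; if $\gamma(F)<\ell-1$, any putative new copy through an edge $e'\in E(F_j)\setminus\{e_i,e_j\}$ would, by Claim~\ref{claim:tilde-F-is-F-f+e}, force $|e_i\cap e'|>\gamma(F)$, impossible for two edges of the single copy $F_j$. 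Your plan identifies this as ``the main obstacle'' but does not resolve it; this is where the actual argument lives.
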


\begin{proof}
  As $\Delta(i) = 1$ we know that $F_i$ is the first step which
  intersects the inner vertices of a fully-open step $F_j$ in $S_{i-1}$, for some $j <
  i$. Denote the attachment edges of $F_j$ and $F_i$ by $e_j$ and $e_i$, respectively. Before step $F_i$, by Claim \ref{claim:regular-fully-open} (if $\gamma(F) < \ell - 1$) and Claim \ref{claim:tilde-F-is-F-f+e} (if $\gamma(F) = \ell - 1$) the step $F_j$ had $e(F)-1 \ge 2$ edges which were not closed in $G_{i-1}$. We show below that step $F_i$ closes exactly one edge of $F_j$.   Thus, after the step $F_i$ the copy $F_j$ still contains at least one edge that is 
  not closed. Therefore,  in the $(i+1)$-iteration of the Algorithm~\ref{algo:grow-sequence-algo}, $F_{i+1}$ will be chosen in such a way that it intersects one of the edges of $F_j$ which are not yet closed. As $F_{i+1}$ is regular, it follows from the same arguments as in the proof of Claim \ref{claim:regular-fully-open} that it does not intersect the inner vertices of any other fully-open step in $S_i$ and we can conclude that $\Delta(i+1) = 0$.

 It remains to show that $F_i$ closes exactly one edge in $F_j$. We do this by a case distinction based on $\gamma(F)$.
 Assume first that $\gamma(F) = \ell - 1$ and consider some edge $e \in E(F_j) \setminus \{e_i,e_j\}$. By Claim~\ref{claim:tilde-F-is-F-f+e} the only $F$-copy in $G_{i-1}$ that contains $e$ is $F_j$. Moreover, again by Claim~\ref{claim:tilde-F-is-F-f+e} the only $F$-copy in $G_i$ which does not belong to $G_{i-1}$ is $F_i$. Since $e \notin E(F_i)$, $e$ also belongs to less than two copies in $G_i$ and thus it remains not closed.
  
Assume now that $\gamma(F) < \ell - 1$. Since in this case $F$ contains at least $4$ edges, let us consider any two distinct edges $e', e'' \in E(F_j) \setminus \{e_i, e_j\}$. First, it follows from Claim \ref{claim:regular-fully-open} that $e' \equiv_F e''$ in $G_{i-1}$. Furthermore, let us assume that there exists an $F$-copy $F'$ in $G_i$, not fully contained in $G_{i-1}$, which contains $e'$. Then, by Claim \ref{claim:tilde-F-is-F-f+e} there exists a unique such copy $F' = F_i - e_i + e'$ and $|e_i \cap e'| > \gamma(F)$. However, as $e_i$ and $e'$ both belong to the copy $F_j$, this contradicts the definition of $\gamma(F)$. Therefore, such an $F$-copy $F'$ does not exist and, by symmetry, the same is true for the edge $e''$. In other words, the property that an $F$-copy $\hat F$ in $G_{i}$ contains $e'$ if and only if it contains $e''$ remains true, thus $e'$ is not closed in $G_i$.
\end{proof}

As a final step before proving Claim \ref{claim:open-edges-bound},  we prove a lower bound on the number of fully-open steps that must be contained in any grow sequence of length $s$ with at most $d$ degenerate steps. Using Claim \ref{claim:consecutive-regular-bound}, the proof of the following claim is identical to the proof of Claim 11 from \cite{NenadovSteger:2014}.  We include it for the sake of completeness.

\begin{claim}\label{claim:fully-open-bound}
  For all $1 \leq i \leq s$ it holds that
  \begin{equation}\label{eq:fully-open-bound}
    \fo(S_i) \geq \reg(S_i)/2 -
    \deg(S_i)\cdot v(F).
  \end{equation}
\end{claim}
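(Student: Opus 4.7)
The plan is to prove this by induction on $i$ via a potential-function argument. Define
$$\phi(i) := \fo(S_i) - \reg(S_i)/2 + \deg(S_i) \cdot v(F),$$
so that the claim is equivalent to $\phi(i) \geq 0$; I will in fact establish the stronger bound $\phi(i) \geq 1/2$ for all $i \geq 0$. The base case is immediate since $F_0$ is fully-open by definition, giving $\phi(0) = 1$.

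First I analyse the per-step change in $\phi$. If $F_i$ is degenerate then $\reg$ is unchanged, $\deg$ grows by $1$, and $\fo$ drops by at most $\Delta(i) \leq v(F) - \ell + 1 \leq v(F) - 1$ by Claim~\ref{claim:max_delta} (using $\ell \geq 2$), so $\phi$ grows by at least $v(F) - (v(F)-1) = 1$. If $F_i$ is regular then $F_i$ itself is fully-open in $S_i$ while at most $\Delta(i) \leq 1$ earlier fully-open copies are destroyed, so $\fo$ changes by $1 - \Delta(i)$ and hence $\phi$ changes by $1/2 - \Delta(i) \in \{+1/2,\,-1/2\}$. In particular, the only way $\phi$ can decrease in a single step is via a regular step with $\Delta(i) = 1$.

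To control these ``bad'' regular steps, I will strengthen the induction hypothesis to
\begin{enumerate}[(i)]
\item $\phi(i) \geq 1/2$, and
\item $\phi(i) \geq 1$ unless $F_i$ is a regular step with $\Delta(i) = 1$.
\end{enumerate}
For the inductive step, a degenerate $F_i$ or a regular $F_i$ with $\Delta(i)=0$ raises $\phi$ by at least $1/2$, so both (i) and (ii) are preserved. In the remaining case -- $F_i$ regular with $\Delta(i) = 1$ -- Claim~\ref{claim:consecutive-regular-bound} implies that $F_{i-1}$ is either absent ($i=1$), degenerate, or regular with $\Delta(i-1)=0$. In each of these sub-cases (ii) at step $i-1$ yields $\phi(i-1) \geq 1$, hence $\phi(i) \geq 1/2$, which verifies (i) and establishes (ii) vacuously, completing the induction.

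The main technical point is the strengthening (ii): without it the potential could conceivably drop by $1/2$ at every single step, and Claim~\ref{claim:consecutive-regular-bound} is precisely the tool that forbids two such ``bad'' regular steps from occurring consecutively; (ii) simply reformulates this prohibition as a step-by-step invariant that is strong enough to push the induction through.
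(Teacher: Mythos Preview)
Your proof is correct and follows essentially the same strategy as the paper: induction on $i$ with a strengthened hypothesis, using Claim~\ref{claim:max_delta} for the per-step change and Claim~\ref{claim:consecutive-regular-bound} to rule out two consecutive ``bad'' regular steps. The paper's strengthening is phrased as $\fo(S_i)\ge\phi_{\text{paper}}(i)$ if $F_i$ is regular and $\fo(S_i)\ge\phi_{\text{paper}}(i)+1$ if $F_i$ is degenerate, and for the regular case it tracks back to the last index $j<i$ with $\Delta(j)>0$ or $F_j$ degenerate; your invariant (ii) packages the same information in a way that allows a pure one-step induction, which is arguably cleaner. One cosmetic remark: for $i=1$ the step $F_{i-1}=F_0$ is not ``absent'' but is the first step (neither regular nor degenerate); your invariant (ii) still applies to it and gives $\phi(0)=1$, so the argument goes through unchanged.
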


\begin{proof}
Let us denote by $\phi(i) := \reg(S_i)/2 - \deg(S_i) \cdot v(F)$ the right hand side of Equation \eqref{eq:fully-open-bound}. We use induction to prove the following slightly stronger statement,
$$
\fo(S_i) \geq 
\begin{cases}
\phi(i)&\text{ if $F_i$ is a regular step}\\
 \phi(i) + 1&\text{ if $F_i$ is a degenerate step,}
\end{cases}$$
for all $1 \le i \le s$.
One easily checks that this holds for $i=1$: if $F_1$ is a regular step then $\fo(S_1) = 1 > 1/2$, otherwise $\fo(S_1) = 0 > -v(F) + 1$. Consider now some $i \ge 2$.
If $F_i$ is a degenerate step then from Claim \ref{claim:max_delta} we have $\Delta(i) \le v(F) - \ell + 1 \le v(F) - 1$ and so $\fo(S_i) = \fo(S_{i - 1}) - \Delta(i) \geq \fo(S_{i - 1}) - v(F) + 1$. The claim now easily follows from $\reg(S_i) = \reg(S_{i - 1})$ and $\deg(S_i) = \deg(S_{i-1}) + 1$.

Otherwise, assume that $F_i$ is a regular step and let
$$j := \max\{ 1\le j < i \mid \Delta(j) > 0\text{ or } F_{j} \text{ is a degenerate step}\}.$$ 
Note that $j$ is well defined, as $\Delta(1)=1$. Further, by the definition of $j$, $F_{i'}$ is a regular step for all $j < i' \le i$, thus $\phi(i) = \phi(j) + (i -j)/2$. In addition, we deduce from $\Delta(i') = 0$ for $j<i'<i$ that all steps  $F_{i'}$ are fully-open in $S_{i-1}$. We thus have
$$
\fo(S_i) = \fo(S_j) + (1 - \Delta(i)) + (i - j - 1) = \fo(S_j) + i - j - \Delta(i).
$$
If $F_{j}$ is a degenerate step then the induction assumption implies $\fo(S_j) \geq \phi(j) + 1$. As $F_i$ is a regular step and thus $\Delta(i) \le 1$, this implies $\fo(S_i) \geq \phi(j) + i - j \ge \phi(i)$, as claimed. Finally, assume that $F_{j}$ is a regular copy. If $\Delta(i)=0$, then the claim follows trivially by the induction. Otherwise we have  $\Delta(i) = 1$ and as  $\Delta(j) = 1$ by Claim~\ref{claim:consecutive-regular-bound} we have that $i \geq j + 2$. Therefore 
$$\fo(S_i) = \fo(S_j) + i - j - 1 \geq \fo(S_j) + (i-j)/2 \ge\phi(j) + (i-j)/2=\phi(i),$$ similarly as before. This finishes the proof of the claim.
\end{proof}

Finally, we are ready to prove Claim~\ref{claim:open-edges-bound}.

\begin{proof}[Proof of Claim~\ref{claim:open-edges-bound}]
  We prove part (\ref{item:claim-open-edges-bound-1}) first.
  Let us assume that $S=(F_0,\ldots,F_s)$ contains at most $d$
  degenerate steps.  Every $F$-copy in $B:=\cup_{i=0}^s F_i$ is closed by the property of $S$, thus by Claim \ref{claim:regular-fully-open} there are no fully-open steps in $S$. By Claim~\ref{claim:fully-open-bound} this implies that
  \begin{equation}\label{eq:no-open-edges-bound}
    \deg(S)\cdot v(F)  \geq \reg(S) / 2
  \end{equation}
  must hold. We have $\deg(S) \leq d$ and $\reg(S) \geq s - d $ (the first step is neither degenerate nor regular). We obtain
  \begin{equation*}
    d\cdot v(F) \geq (s - d)/2.
  \end{equation*}
  Solving for $s$  we get
  \begin{equation*}
    s \leq 2 d \left(v(F) + 1/2 \right) \leq 3d\cdot v(F),
  \end{equation*}
  which proves the first part.

  For  part~(\ref{item:claim-open-edges-bound-2}) of Claim~\ref{claim:open-edges-bound} let $S_i$ be a  prefix of $S$, for some $1 \leq i \leq s$, with at most $d$ degenerate steps. Note that before any  regular step $F_j$, $j \leq i$ created in line \ref{line:closed-case}, all $F$-copies of $G_{j-1}$ are closed and thus by Claim \ref{claim:regular-fully-open} we have $\fo(S_{j-1}) = 0$. Similarly as above, we have $\deg(S_{j-1}) v(F) \geq \reg(S_{j-1})/2$. 
  As we know that $\deg(S_{j-1}) \leq d$ we obtain $ j - 1 \leq 3 d v(F)$, which concludes the proof.
\end{proof}

\section{Applications}\label{sec:applications}


\subsection{Anti-Ramsey property -- proper coloring} \label{sec:ar_proper}

The key ingredient for the proof of Theorem \ref{thm:ar_proper} is the following lemma whose proof we defer to the next section.

\begin{lemma} \label{lemma:ar_proper_cases}
Let $F$ be a graph isomorphic to either a cycle on at least $7$ vertices or a complete graph on at least $19$ vertices. Then for any graph $G$ such that $m(G) \le m_2(F)$ it holds that $G \naramcolp F$.
\end{lemma}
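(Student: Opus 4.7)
The plan is to construct, for each case, a non-adjacent edge pairing of $G$: a set $\mathcal{S}$ of pairwise edge-disjoint pairs of non-adjacent edges such that every copy of $F$ in $G$ contains both edges of at least one pair in $\mathcal{S}$. Given such a pairing, a proper edge colouring without a rainbow $F$-copy is immediate: colour each pair with a common private colour and each unpaired edge with its own colour. Each colour class is then a matching (of size one or two disjoint edges), so the colouring is proper, and every $F$-copy contains at least one colour used twice, hence is not rainbow. Thus the lemma reduces to the existence of such a pairing for each $F$.

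For the cycle case ($F = C_k$, $k \ge 7$), the hypothesis $m(G) \le (k-1)/(k-2) < 3/2$ together with Lemma~\ref{thm:k-deg} shows that $G$ is $2$-degenerate. I would fix a $2$-degeneracy ordering $v_1, \dots, v_n$ and scan through it, so that at each step $v_i$ introduces at most two new edges, and the only $C_k$-copies newly created at step $i$ are those of the form $v_iu\dots wv_i$ for some $u$-$w$ path of length $k-2$ in the current subgraph. For each such copy I would pair a new edge $v_iu$ with an interior edge of the $u$-$w$ path that is non-adjacent to it; since the cycle has $k$ edges and the hypothesis $k \ge 7$ guarantees at least $k - 3 \ge 4$ interior edges disjoint from $v_iu$, there is ample room to pick a pair-mate. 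Global edge-disjointness of the pairs is maintained by noting that in a $2$-degenerate graph each edge participates in boundedly many $C_k$-copies, and a greedy choice that avoids already-committed edges succeeds.

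For the clique case ($F = K_k$, $k \ge 19$), the density bound $m(G) \le (k+1)/2 = m_2(K_k)$ yields degeneracy at most $k+1$, so along the degeneracy ordering every $K_k$-copy has a unique last vertex $v$ whose $k-1$ co-members of the copy lie among the at most $k+1$ earlier neighbours of $v$. Each such copy contains $\binom{k-2}{2}/2 = \Omega(k^2)$ choices of non-adjacent pairs of edges, while by the density bound each edge lies in at most $O(k^2)$ copies of $K_k$ (using additionally that $\omega(G) \le k+2$, as $K_{k+3}$ would violate the density). For $k \ge 19$ this counting is comfortably in favour of the pairs, and a bipartite matching / Hall-type argument—or, alternatively, a greedy processing of copies in the degeneracy order—produces a pairing assigning to every $K_k$-copy a non-adjacent pair of its own edges without reusing any edge of $G$.

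The main obstacle in both cases is global consistency of the pairing: the same edge may lie in many $F$-copies, so naive local choices can collide. The density hypothesis is used to bound the ``copy-degree'' of each edge, which is precisely what enables the greedy or Hall-type step to succeed; the numerical lower bounds $k \ge 7$ (cycles) and $k \ge 19$ (cliques) provide the slack needed to make these counts go through, and I expect neither bound to be tight.
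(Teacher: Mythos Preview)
Your plan hinges on a false premise in both cases: the claim that each edge lies in only boundedly many $F$-copies. For cycles, take the ``book'' graph consisting of an edge $uv$ together with $t$ internally disjoint $u$--$v$ paths of length $k-1$; one checks that $m(G) \le (k-1)/(k-2) = m_2(C_k)$, yet the edge $uv$ lies in $t$ copies of $C_k$, with $t$ arbitrary. For cliques, the analogous fan of $t$ copies of $K_k$ glued along a common edge has $m(G) \le (k+1)/2 = m_2(K_k)$, and again the shared edge lies in $t$ copies. So neither ``in a $2$-degenerate graph each edge participates in boundedly many $C_k$-copies'' nor ``each edge lies in at most $O(k^2)$ copies of $K_k$'' holds, and the greedy/Hall arguments you sketch do not go through as stated: once copy-degree is unbounded, nothing you wrote bounds the deficiency in Hall's condition, nor guarantees that a greedy scan leaves enough uncommitted edges. (Your count of non-adjacent pairs inside a $K_k$ is also off --- it is $3\binom{k}{4} = \Theta(k^4)$, not $\Theta(k^2)$ --- but that is secondary.) In the specific counterexamples above a pairing does happen to exist, because each copy has many private edges; but turning this into a general argument is a different task from the one you outlined, and it is not even clear that a $2$-bounded \emph{proper} colouring --- which is what your pairing produces, and which is strictly stronger than what the lemma asks --- always exists under the density hypothesis for cliques.

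The paper's proofs avoid the pairing framework entirely. For cycles it runs a minimal-counterexample argument: assuming $G$ is vertex-minimal with $G \aramcolp C_\ell$ and $m(G) \le m_2(C_\ell)$, one shows that no two degree-$2$ vertices can be adjacent (else remove them, colour the rest by minimality, and extend by giving their two remaining non-adjacent edges a common new colour), whence the degree-$2$ vertices form an independent set; a direct edge count then forces a contradiction for $\ell \ge 7$. For cliques the paper also processes a degeneracy order, but instead of pairing edges it gives almost all new edges fresh colours and reuses at most three old colours per vertex; the key structural input is Claim~\ref{claim:cliques_overlap}, which bounds (by roughly $6\ell/(\ell-3)$) how many \emph{later} vertices can form a $K_\ell$ with the back-neighbourhood of a given vertex, and this is what controls the propagation of reused colours. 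Neither argument needs --- or attempts --- a bound on the number of $F$-copies through a single edge.
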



\begin{proof}[Proof of Theorem \ref{thm:ar_proper}]
Let $F$ be some graph as stated in the theorem and $c$ a constant given by Corollary~\ref{lemma:main_density} when applied to $F$. Let $p \le cn^{-1/m_2(F)}$ and $G \sim \Gnp$.  We use Algorithm \ref{algo:ar_proper} to find a proper coloring of $G$ without a rainbow $F$-copy.
{\LinesNumbered
\begin{algorithm}
  \DontPrintSemicolon
  $\hat G \leftarrow G$\;
  $\textrm{col} \leftarrow 0$\;
  \While{$\exists e_1, e_2 \in E(\hat G) \; : \; e_1 \equiv_F e_2$ in $\hat G$ and $e_1 \cap e_2 = \emptyset$}{
  	color $e_1, e_2$ with $\textrm{col}$\;
  	$\hat G \leftarrow \hat G \setminus \{e_1, e_2\}$ and $\textrm{col} \leftarrow \textrm{col} + 1$ \label{line:proper_f_equiv}\;
  }
  \While{$\exists e \in E(\hat G) \; : \; e$ does not belong to an $F$-copy}{
  	color $e$ with $\textrm{col}$\;
  	$\hat G \leftarrow \hat G \setminus \{e\}$ and $\textrm{col} \leftarrow \textrm{col} + 1$
  }
  Remove isolated vertices in $\hat G$\; 
  $\{B_1, \ldots, B_k\} \leftarrow $ $F$-blocks obtained by applying Lemma \ref{lemma:block_split} on $\hat G$    \;
  Color (properly) each $B_j$ without a rainbow $F$-copy using distinct sets of colors (cf.~text why this is possible) \label{line:proper_G_hat}  \vspace{0.2cm}
  \caption{Proper colouring without rainbow $F$-copy.} \label{algo:ar_proper}
  \end{algorithm}}

 To see the correctness of the algorithm, observe first that it suffices to argue that the graph $\hat G$  obtained in  line \ref{line:proper_after_removing} can be properly colored without a rainbow copy of $F$. Indeed, we only remove edges that are not contained in an $F$-copy (and can thus be colored arbitrarily) or pairs of (non-adjacent) edges that are both contained in exactly the same $F$-copies (and can thus not be contained in a rainbow copy, if we give them the same color). 

It thus remains to prove that line \ref{line:proper_G_hat} is indeed possible. We first show that the graph $\hat G$  is $F$-closed. 
Assume otherwise. Then there has to exist  an $F$-copy $F'$ which has at most two closed edges (as there are no vertices and edges which are not a part of an $F$-copy). If $F' \cong K_\ell$ then as $\ell \ge 19 $ there at least 
$\binom{\ell}{2} -2 > \ell$ edges of $E(F')$ which are not closed. One easily checks that this implies that there are two  edges $e_1, e_2 \in E(F')$ that satisfy $e_1 \cap e_2  = \emptyset$ and are not closed. Thus, $F'$ is the only $F$-copy to which $e_1$ and $e_2$ belong, implying that $e_1 \equiv_F e_2$. However, this can't be, as such a pair would have been removed in line~\ref{line:proper_f_equiv} of the algorithm.
If $F' \cong C_\ell$ then there are at least $\ell - 2 \geq 5 $ edges of $F'$ which are not closed and as $F'$ is a cycle two of those must be non-intersecting, again yielding a contradiction similarly as in the previous case.

So we know that $\hat G$ is $F$-closed. We thus can apply Lemma \ref{lemma:main} to deduce that we have  w.h.p. that each $F$-block $B$ in $G$ satisfies $m(B) \le m_2(F)$. By Lemma \ref{lemma:block_split}, coloring one block $B_i$ does not influence the coloring of any $F$-copy which does not lie in $B_i$ and all $B_i$'s are edge-disjoint. Finally, by Lemma \ref{lemma:ar_proper_cases} there exists a desired proper coloring of every block $B_i$, which gives a proper coloring of $\hat G$ (and  of the graph $G$) without a rainbow $F$-copy.
\end{proof}

\subsubsection{Proof of Lemma \ref{lemma:ar_proper_cases}}

We start with a technical observation that will help us prove the case of forbidden complete graphs.

\begin{claim} \label{claim:cliques_overlap}
Let $\ell \ge 4$ be an integer and let $G$ be a graph with $m(G) \le (\ell + 1)/2$. Then for any vertex $v \in V(G)$ and a subset $A \subseteq N_G(v)$ of size $|A| \le \ell + 1$, there exist at most $\lfloor 6 \cdot \ell / (\ell - 3)\rfloor$ vertices $w \in V(G) \setminus (A \cup \{v\})$ with the property that $G[A' \cup \{v, w\}] \cong K_{\ell}$ for some $A' \subseteq A$.
\end{claim}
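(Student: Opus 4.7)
The plan is a simple double-counting argument that exploits the hypothesis $m(G) \le (\ell+1)/2$, noting that $(\ell+1)/2 = m_2(K_\ell)$ is exactly the density budget of a $K_\ell$. Let $W \subseteq V(G)\setminus(A\cup\{v\})$ denote the set of vertices $w$ described in the statement and set $s := |W|$. For each $w\in W$, fix a witness $A_w\subseteq A$ with $|A_w|=\ell-2$ such that $G[A_w\cup\{v,w\}] \cong K_\ell$. If $|A| < \ell-2$ then $W=\emptyset$ and the claim is trivial, so I may assume $|A|\ge \ell-2$ and in particular that at least one $A_w$ exists.

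I would then consider the subgraph $H := G[\{v\}\cup A\cup W]$, which has $v(H) = |A|+1+s$ vertices, and lower-bound $e(H)$ as a sum of four disjoint contributions: (i) the $|A|$ edges from $v$ to $A$, since $A\subseteq N(v)$; (ii) the $s$ edges from $v$ to $W$, since each $w\in W$ is adjacent to $v$ by the clique condition; (iii) at least $(\ell-2)s$ edges between $W$ and $A$, using that each $w$ is joined to all $\ell-2$ vertices of $A_w\subseteq A$; and, crucially, (iv) at least $\binom{\ell-2}{2}$ edges inside $A$ coming from the $K_{\ell-2}$ induced on any single $A_w$. Summing gives
\[
e(H) \;\ge\; |A| + (\ell-1)s + \binom{\ell-2}{2}.
\]

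Combining this with the upper bound $e(H) \le v(H)(\ell+1)/2$ coming from $m(G)\le(\ell+1)/2$, substituting $|A|\le\ell+1$, and doing a short linear rearrangement collapses the inequality to $s(\ell-3) \le 6(\ell-1)$. This yields $s \le 6(\ell-1)/(\ell-3) < 6\ell/(\ell-3)$, hence the claimed integer bound $s \le \lfloor 6\ell/(\ell-3)\rfloor$.

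The only subtle point is contribution (iv): without the $\binom{\ell-2}{2}$ edges inside $A$ one only obtains $s(\ell-3) \le (\ell-1)|A|+(\ell+1)$, giving a useless bound $s = O(\ell)$ rather than the $O(1)$ bound needed for the application. Once one thinks to include the clique $A_w$'s own edges in the count, the rest is routine algebra.
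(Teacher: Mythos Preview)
Your proof is correct and follows essentially the same double-counting argument as the paper: both bound $e(G[\{v\}\cup A\cup W])$ from below via the clique structure (the $\binom{\ell-2}{2}$ edges inside a witnessing $A_w$, the edges from $v$ to $A$, and the $\ell-1$ edges from each $w$ into $A\cup\{v\}$) and from above via $m(G)\le(\ell+1)/2$, then solve for $|W|$. The only cosmetic difference is that you count all $|A|$ edges from $v$ to $A$ while the paper conservatively counts only $\ell-2$ of them, which is why you arrive at the marginally sharper $s(\ell-3)\le 6(\ell-1)$ rather than the paper's $k(\ell-3)\le 6\ell$; both suffice for the stated bound.
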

\begin{proof}
First, note that if $G[A]$ does not contain a copy of $K_{\ell - 2}$ then there is no such vertex $w \in V(G) \setminus (A \cup \{v\})$. Therefore, we can assume that $|A| \ge \ell - 2$ and $G[A]$ contains at least $\binom{\ell - 2}{2}$ edges. Note that then $e(G[A \cup \{v\}]) \ge \binom{\ell - 2}{2} + \ell - 2$. Assume now that there are $k$ vertices $W = \{w_1, \ldots, w_k\} \subseteq V(G) \setminus (A \cup \{v\})$ with the described property. Then each such vertex $w_i$ has at least $\ell - 1$ neighbours among vertices in $A \cup \{v\}$, thus
\begin{align}
e(G[A \cup \{v\} \cup W]) &\ge \frac{(\ell-2)(\ell - 3)}{2} + \ell - 2 + k\cdot(\ell - 1) \notag \\
&= \frac{(\ell - 2)(\ell - 1)}{2} + k(\ell - 1) = (\ell - 1)(\ell/2 - 1 + k). \label{eq:contra1}
\end{align}
On the other hand, from $m(G) \le (\ell + 1)/2$ and $|A| \le \ell + 1$ we have
\begin{equation}
 e(G[A \cup \{v\} \cup W]) \le \frac{\ell + 1}{2} (\ell + 2 + k) = (\ell + 1)(\ell/2 + 1 + k/2). \label{eq:contra2}
\end{equation}
Finally, combining \eqref{eq:contra1} and \eqref{eq:contra2} gives $k \le 6 \cdot \ell / (\ell - 3)$ which concludes the proof of the claim as $k$ has to be an integer.
\end{proof}

\begin{proof}[Proof of Lemma \ref{lemma:ar_proper_cases} - complete graphs]
Let $\ell \ge 19$ and $G$ be a graph on $n$ vertices with $m(G) \le m_2(K_\ell) = (\ell + 1)/2$. By Lemma~\ref{thm:k-deg} there exists an ordering $v_1, \ldots, v_n$ of the vertices of $G$ such that \begin{equation} \label{eq:order}
|N(v_i) \cap \{v_1, \ldots, v_{i-1}\}| \le \ell + 1
\end{equation}
for every $i \in [n]$ and let $G_i := G[\{v_1, \ldots, v_i\}]$. Given a (partial) edge-coloring $p$ of $G$, we say that an edge $e \in E(G)$ is \emph{$i$-new} if no edge in $G_{i-1}$ is colored with $p(e)$. We will inductively find a proper coloring $p_i$ of $G_i$ such that the following holds,
\begin{enumerate}[(i)]
\item $G_i$ does not contain a rainbow copy of $K_\ell$ under coloring $p_i$,
\item for every $j \in [i]$: all but at most three edges incident to $v_j$ in $G_j$ are $j$-new, and
\item for every $j < r \le i$: if an edge $\{v_j, v_r\} \in E(G)$ is not $j$-new, then there exists a subset of vertices $S \subseteq \{v_1, \ldots, v_{j-1}\}$ such that $G[\{v_j, v_r\} \cup S] \cong K_\ell$.
\end{enumerate}
The base of the induction trivially holds, thus assume that the induction hypothesis holds for all $i < k$, for some $2 \le k \le n$.

Let $p_{k-1}$ be any coloring of $G_{k-1}$ which satisfies $(i)$-$(iii)$.  We create a coloring $p_k$ by extending the coloring $p_{k-1}$ to the edges incident to $v_k$ in $G_k$. Note that this implies that the only $K_\ell$-copies we have to take care of are those which contain the vertex $v_k$. Similarly, the only edges which might violate properties $(ii)$ and $(iii)$ are those incident to $v_k$. 

Let $v_{i_1}, \ldots, v_{i_q}$ be the neighbours of $v_k$ in $G_k$, with $i_{j} < i_{j+1}$ for all $j \in [q-1]$. It follows from \eqref{eq:order} that $q \le \ell + 1$. Initially, assign an arbitrary new color to each edge $\{v_k, v_{i_j}\}$ for $j \le \min\{q, \ell - 2\}$. Note that this leaves at most three edges of $G_k$ uncolored, thus the property $(ii)$ is guaranteed to be satisfied. If $q < \ell - 1$, then the vertex $v_k$ does not belong to any copy of $K_\ell$ in $G_k$ and properties $(i)$ and $(iii)$ remain satisfied as well -- in which case we are done. Therefore, from now on we assume that $q \in \{\ell - 1, \ell, \ell + 1\}$.

 Let $R = \{v_{i_{\ell-1}}, \ldots, v_{i_q}\}$ be the set of the remaining neighbours of $v_k$, i.e. endpoints of edges that are not yet colored. We first "clean" $R$ as follows: for any $v_j \in R$ for which there does not exist a subset $S \subseteq \{v_1, \ldots, v_{j-1}\}$ such that $G[S \cup \{v_j, v_k\}] \cong K_{\ell}$, assign an arbitrary new color to $\{v_j, v_k\}$ and set $R := R \setminus \{v_j\}$. Note that if $R = \emptyset$ after this procedure, then $v_k$ does not belong to a copy of $K_\ell$ in $G_k$ and it is easy to see that properties $(i)$-$(iii)$ are satisfied. Therefore, we can assume that $R \neq \emptyset$ and observe that any coloring we assign to the remaining edges will satisfy $(iii)$. Furthermore, note that every copy of $K_\ell$ which contains $v_k$ in $G_k$ also contains at least one vertex from $R$.

Before we proceed with the coloring of the remaining edges, we first make an observation about the coloring of the edges in $G_{k-1}$. Let $v_j \in R$ be an arbitrary vertex. An application of Claim~\ref{claim:cliques_overlap} to $A:=N(v_j)\cap\{v_1,\ldots,v_{j-1}\}$, which is by~\eqref{eq:order} at most $\ell+1$, yields that there exist at most
\begin{equation}
\lfloor6\ell/(\ell-3)\rfloor \stackrel{(\ell \ge 19)}{\le} 7 \label{eq:7}
\end{equation}
vertices $v_z \in V(G) \setminus (A \cup \{v_j\})$  such that there exists $S_z \subseteq A$ with $G[\{v_j, v_z\} \cup S_z] \cong K_\ell$. Since, by the definition of $R$, $v_k$ is such a vertex, it follows from \eqref{eq:7} and the proeprty $(iii)$ that there are at most $6$ vertices $v_z$, $j < z < k$ such that the edge $\{v_j, v_z\}$ is not $j$-new. Combining this observation with  property $(ii)$, we have that there are at most $9$ colors assigned to edges incident to $v_j$ which are also assigned to some edge in $G_{j-1}$. Let us denote the set of such colors with $C_j$ and
\begin{equation}
|C_j| \le 9. \label{eq:bound_C}
\end{equation}
With this observation at hand, we go back to the coloring of the remaining edges.

Let $W := \{v_{i_1}, \ldots, v_{i_{\ell - 2}}\}$.
Our aim now is as follows: for each vertex $v_j \in R$ we want to find pairwise disjoint $2$-sets $S_j \subseteq W$ such that either $S_j \notin E(G)$ or $p_{k-1}(S_j) \notin C_j$ and $p_{k-1}(S_j) \neq p_{k-1}(S_{j'})$ for distinct $v_j, v_{j'} \in R$. Then the coloring can be completed by setting $p_k(\{v_k, v_j\}) := p_{k-1}(S_j)$ if $S_j \in E(G)$ and assigning an arbitrary new color otherwise. Clearly, a rainbow $K_\ell$-copy which contains $v_k$ and $v_j \in R$ cannot contain both vertices in $S_j$, thus if it contains $v_k$ then it has to miss at least $|R|$ vertices from $W \cup R$. As $|W| \le \ell - 2$  this shows that no such rainbow $K_\ell$-copy exists, which finishes the proof.

We find these sets $S_j$ in a greedy fashion as follows. Let $R' := R$ and $W' := W$ and repeat the following until $R' = \emptyset$: if there exist two vertices $a,b\in W'$ such that $a$ and $b$ do not form an edge, choose $v_j \in R'$ arbitrarily and set $S_j :=\{a,b\}$, $R' := R' \setminus \{v_j\}$ and $W' := W' \setminus \{a, b\}$. Otherwise, choose $v_j \in R'$ arbitrarily and let $a, b \in W'$ be such that $p_{k-1}(\{a,b\}) \notin C_j$ and $p_{k-1}(S_j) \neq p_{k-1}(S_{j'})$ for previously defined sets $S_{j'}$. If this procedure exhausts $R'$, then by the construction of the sets $S_j$ we are done. Furthermore, since in each iteration the size of $R'$ decreases, it suffices to show that both cases are well-defined.

If there exists two vertices $a, b \in W'$  that do not form an edge then there is nothing to show. Therefore, we can assume that $W'$ induces a clique. Note that, for each $v_j \in R$, at most $11$ colors are forbidden; at most two because of the previously defined sets $S_{j'}$ and at most $9$ because of $C_j$. Thus, in order to show that we can find an edge $S_j$ in $W'$ which satisfies the desired property, it suffices to show that there are more than $11$ different colors appearing in the clique $W'$. Since $|R| \le 3$ and $|W| = \ell - 2$ we have $|W'| \ge \ell - 2 - 2 \cdot 2 = \ell - 6$ as long as $R' \neq \emptyset$. On the other hand, every proper coloring of a clique on at least $\ell - 6$ vertices contains at least $\ell - 7 > 11$ different colors, which finishes the proof.
\end{proof}

We remark that more careful counting of the number of different colors in the clique $W'$ gives a slightly better lower bound on $\ell$. Next, we prove the case of cycles.

\begin{proof}[Proof of Lemma \ref{lemma:ar_proper_cases} - cycles]
Let $\ell \ge 7$ and $G$ be a graph on $n$ vertices such that $m(G) \le m_2(C_\ell) = 1 + 1/(\ell -  2)$. Let us assume towards a contradiction that $G$ is a minimal graph with respect to the number of vertices such that $G \aramcolp C_\ell$.

First, observe that in $G$ no two vertices of degree $2$ are adjacent. To see this, let us assume that two such vertices $v_1, v_2 \in V(G)$ exist. Then $N(v_1) \cap N(v_2) = \emptyset$ as otherwise $v_1$ and $v_2$ do not belong to a $C_\ell$-copy thus contradicting the minimality of $G$. Therefore, the edges $e_1$ and $e_2$ incident to $v_1$ and $v_2$, different from the edge $\{v_1, v_2\}$, satisfy $e_1 \cap e_2 = \emptyset$. Furthermore, it follows again from the minimality of $G$ that
$$ G \setminus \{v_1, v_2\} \naramcolp C_\ell.$$
Consider an arbitrary coloring of $G \setminus \{v_1, v_2\}$ without a rainbow $C_\ell$-copy. We assign the same (new) color to $e_1$ and $e_2$. Observe that no rainbow $C_\ell$-copy can contain both $v_1$ and $v_2$. On the other hand, since $e_1 \equiv_{C_\ell} e_2$ in $G$ and there is no rainbow $C_\ell$-copy in $G \setminus \{v_1, v_2\}$ this implies $G \naramcolp C_\ell$, a contradiction.

Next, it is easy to see that $G$ does not contain a vertex $v$ of degree $1$ as such a vertex does not belong to a $C_\ell$-copy and would contradict the minimality of $G$. Therefore, $\delta(G) \ge 2$ and by the previous observation the set $V_2 \subseteq V(G)$ of all the vertices of degree $2$ is an independent set. We estimate the size of $V_2$ as follows,
$$ 2m(G) n \ge 2e(G)=\sum_{v \in G} \deg(v) \ge \sum_{v \in V_2}2 + \sum_{v \in V(G) \setminus V_2}3 = |V_2| \cdot 2 + (n - |V_2|) \cdot 3 $$
and therefore $|V_2| \ge (1 - 2/(\ell - 2)) n$. Since $V_2$ is an independent set this implies 
$$(1+\tfrac{1}{\ell-2})n \ge e(G) \ge e(V_2, V(G) \setminus V_2) = |V_2| \cdot 2 \ge (2 - \tfrac4{\ell - 2}) n.$$
One easily checks that this a contradiction for all $\ell\ge 8$. For $\ell=7$ we have that the left hand side is equal to the right hand side, which implies that the graph $G$ is bipartite.
Since $C_7$ is not bipartite, $G$ does not contain $C_\ell$-copy, implying the desired contradiction also in this case. 
\end{proof}

%
%

\subsection{Anti-Ramsey property -- 2-bounded colorings} \label{sec:ar_bounded}

Here we give a proof of Theorem \ref{thm:2_bnd}. We use the following three lemmas which provide a density condition of graphs that are not anti-Ramsey corresponding to the three cases from Theorem \ref{thm:2_bnd}.  We defer the proofs to the next subsection.

\begin{lemma} \label{lemma:nbounded_general}
Let $F$ be a strictly $2$-balanced graph on at least $4$ vertices which contains a cycle and is not isomorphic to $C_4$. Then for any graph $G$ such that $m(G) \le m_2(F)$ it holds that $G \naramcolk[2] F$.
\end{lemma}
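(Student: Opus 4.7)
My plan is to construct, for every graph $G$ with $m(G)\le m_2(F)$, an explicit $2$-bounded edge colouring of $G$ with no rainbow $F$-copy. I would first perform an algorithmic reduction in the spirit of the first two loops of Algorithm~\ref{algo:ar_proper}: colour with a fresh unique colour (and delete) every edge of $G$ not lying in any $F$-copy, and colour with a common fresh colour (and delete) any two vertex-disjoint $F$-equivalent edges of $G$. The first operation is safe because such edges cannot appear in any $F$-copy at all; the second is safe because two $F$-equivalent edges lie in exactly the same $F$-copies, so giving them a common colour kills all these copies simultaneously. After this reduction I may assume every edge of $G$ lies in some $F$-copy and no two vertex-disjoint edges of $G$ are $F$-equivalent.

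Next I would extract the structural consequences of the hypotheses on $F$. Strict $2$-balancedness combined with $v(F)\ge 4$ and the cycle assumption forces $\delta(F)\ge 2$ (else removing a pendant vertex would yield a denser subgraph), and in fact $\deg_F(u)>m_2(F)$ for every $u\in V(F)$ by a similar calculation applied to $F-u$. Moreover, since $F\not\cong C_4$, either $F\cong C_k$ for some $k\ge 5$, or $F$ is not a cycle, in which case the same strict-balancedness computation yields $e(F)\ge v(F)+1$. I plan to handle these two cases separately.

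In the cycle case $F\cong C_k$, $k\ge 5$, I would exploit $m_2(C_k)=(k-1)/(k-2)<2$ to analyse the low-density structure of the reduced graph. Since every remaining vertex has degree at least $2$, the reduced graph coincides with its $2$-core and has maximum subgraph density strictly less than $2$. In this regime two $C_k$-copies overlap only weakly, and I plan to produce (one pair per overlapping component of $C_k$-copies) a pair of edges lying in every $C_k$-copy of that component; giving each such pair a common fresh colour and the remaining edges distinct fresh colours yields the desired $2$-bounded colouring. The subtlety is handling overlapping $C_k$-copies, for which I would invoke a block-style decomposition and argue within each block using the density bound.

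In the non-cycle case $e(F)\ge v(F)+1$, I would use Lemma~\ref{thm:k-orient} to orient $G$ with all out-degrees bounded by $d:=\lceil m_2(F)\rceil$. Since $e(F)/v(F)>1$, every $F$-copy $F'$ has a vertex with out-degree $\ge 2$ in $F'$. I would then pair the out-edges of each vertex of $G$ (two at a time, with possibly one unpaired), give each pair a shared fresh colour, and colour unpaired out-edges with distinct fresh colours. The main obstacle is to guarantee that in every $F$-copy $F'$ two edges of $F'$ end up in the same pair: a naive pairing fails when some vertex has out-degree $\ge 3$ in $G$ but only $2$ in $F'$, since the two $F'$-edges at such a vertex may be placed in different pairs. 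To resolve this I anticipate needing a ``pure vertex'' lemma: one can choose the orientation so that every $F$-copy $F'$ contains a vertex whose full $G$-out-neighbourhood lies inside $V(F')$. The hard part will be proving this lemma, which I expect to reduce to a careful counting argument contrasting the total out-flow from $V(F')$ with $d\cdot v(F')$, exploiting both strict $2$-balancedness of $F$ and the ambient density bound $m(G)\le m_2(F)$.
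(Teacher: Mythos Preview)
Your proposal has genuine gaps in both cases.

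\textbf{Cycle case.} Your plan hinges on finding, within each ``overlapping component'' of $C_k$-copies, a pair of edges lying in every $C_k$-copy of the component. You give no argument for why such a pair exists, and after your own reduction it typically will not: two edges lying in exactly the same set of $C_k$-copies are $F$-equivalent, and you have already removed all vertex-disjoint $F$-equivalent pairs. So either no such pair survives, or any surviving pair shares a vertex, in which case it is unclear why one pair suffices for the whole component. The paper handles cycles far more simply via a minimum-degree argument (Claim~\ref{claim:min_degree_col}): since $m(G)\le m_2(C_k)=1+\tfrac{1}{k-2}<\tfrac32=\delta(C_k)-\tfrac12$ for $k\ge 5$, any minimal counterexample has a vertex of degree at most $2(\delta(C_k)-1)=2$, whose two incident edges can be paired with a fresh colour and removed, and one inducts.

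\textbf{Non-cycle case.} Your ``pure vertex lemma'' is an unproven conjecture, and even if it held it would not deliver what you need: a pure vertex $v$ has all $G$-out-neighbours in $V(F')$, but the out-edges at $v$ need not be edges of $F'$ unless $F'$ is an induced clique in $G$, so pairing them need not place two edges of $F'$ in the same colour class. The paper's orientation argument (Claim~\ref{claim:orientation}) sidesteps this entirely by comparing densities rather than tracking individual copies: pairing out-edges forces every rainbow subgraph $R$ to satisfy $\deg^+_R(v)\le\lceil m(G)/2\rceil$, hence $e(R)/v(R)\le\lceil m(G)/2\rceil$, and if this is $<m(F)$ then $R\not\supseteq F$. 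However, this density comparison alone does not cover all strictly $2$-balanced $F$, so the paper combines it with the minimum-degree criterion (Claim~\ref{claim:min_degree_col}), a refinement forbidding two adjacent minimum-degree vertices in $F$ (Claim~\ref{claim:2_deg_adj}), and a separate ad~hoc treatment of $K_4$ (Lemma~\ref{lemma:k4_2bnd}), stitched together by a case analysis on $m_2(F)=k+x$ and on $e(F)$ versus $v(F)$. Your single orientation-plus-pure-vertex scheme, even if repaired, would need a comparable case split to cover the full range of $F$.
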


\begin{lemma} \label{lemma:nbounded_C4}
For any graph $G$ such that $m(G) < m_2(C_4)$ it holds that $G \naramcolk[2] C_4$. Moreover, there exists a graph $G$ with $m(G) = m_2(C_4)$ such that $G \aramcolk[2] C_4$.
\end{lemma}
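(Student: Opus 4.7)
The lemma has two parts, which I would handle separately. For the first part, I plan an induction on $v(G)$. The key observation is that $m(G) < m_2(C_4) = 3/2$ forces the average degree of $G$ to be strictly less than $3$, and so there exists a vertex $v$ with $\deg(v) \le 2$. If $\deg(v) \le 1$, then $v$ belongs to no $C_4$-copy, so I would assign a fresh colour to its (at most one) incident edge and invoke the inductive hypothesis on $G - v$; note that $G - v$ is a subgraph of $G$, so $m(G - v) \le m(G) < 3/2$. If $\deg(v) = 2$ with incident edges $e_1, e_2$, then every $C_4$ through $v$ contains both $e_1$ and $e_2$, so I would colour them with the same fresh colour and recursively $2$-bound-colour $G - v$ with a disjoint palette. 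The resulting colouring is $2$-bounded by construction: any $C_4$ through $v$ is killed by the repeated colour on $e_1, e_2$, and any $C_4$ avoiding $v$ is killed by the induction hypothesis.

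For the second part I propose the witness $G = K_{3,3}$, which satisfies $m(K_{3,3}) = 9/6 = 3/2 = m_2(C_4)$. Identify the edges of $K_{3,3}$ with the cells of a $3 \times 3$ grid indexed by the two sides of the bipartition; then the $C_4$-copies of $K_{3,3}$ are in bijection with the $\binom{3}{2}^2 = 9$ choices of two rows and two columns, i.e.\ with the $2 \times 2$ subgrids. A $C_4$ fails to be rainbow precisely when its subgrid contains a repeated colour. Given an arbitrary $2$-bounded edge colouring of $K_{3,3}$, let $k$ denote the number of colours used exactly twice; since $2k \le 9$ we obtain $k \le 4$. A single monochromatic pair of cells is contained in at most two of the nine subgrids (exactly two if the two cells share a row or column, exactly one otherwise). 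Hence the $k$ pairs together cover at most $2k \le 8 < 9$ subgrids, so some subgrid has four distinct colours, i.e.\ a rainbow $C_4$; this shows $K_{3,3} \aramcolk[2] C_4$.

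The main obstacle is the second part: one must find the right witness and verify the tight inequality $2k \le 8 < 9$. For contrast, $K_4$ also has density $3/2$ but admits a $2$-bounded colouring with no rainbow $C_4$ (colour two of its three perfect matchings monochromatically and leave the third matching with two singleton colours); so the bipartite structure of $K_{3,3}$, and in particular the mismatch between the number of $2\times 2$ subgrids and the maximum number of edges available for pairing, is really being used. The first part, once the low-degree vertex is extracted, reduces to a routine induction with no further difficulty.
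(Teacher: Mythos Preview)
Your proof is correct and essentially identical to the paper's: the first part is the same minimum-degree induction, and for the second part the paper uses the graph $C_6^{3+}$ (the $6$-cycle with its three long diagonals), which is isomorphic to your $K_{3,3}$, together with the very same counting argument ($9$ copies of $C_4$, at most $4$ monochromatic pairs, each pair kills at most two copies). Your additional observation that $K_4$ fails as a witness is a nice touch not present in the paper.
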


\begin{lemma} \label{lemma:nbounded_hyper}
Let $r, \ell \in \mathbb{N}$ be such that $2 \le \ell \le r - 1$ and $(\ell,r) \notin \{(2,3), (3,4)\}$. Then for any $\ell$-graph $G$ with $m(G) \le m_{\ell}(K_r^{(\ell)})$ it holds that $G \naramcolk[2] K_r^{(\ell)}$.
\end{lemma}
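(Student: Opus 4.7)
I propose an orient-and-pair construction of a 2-bounded edge colouring of $G$ avoiding rainbow $K_r^{(\ell)}$-copies. First, using the Hall-type argument underlying Lemma~\ref{thm:k-orient} (adapted to $\ell$-graphs so that each edge picks one of its vertices as its tail), I would orient the edges of $G$ so that every vertex $v$ has out-degree $d^+(v) \le d := \lfloor m_\ell(K_r^{(\ell)}) \rfloor = \lfloor (\binom{r}{\ell}-1)/(r-\ell) \rfloor$. At each vertex $v$, I would partition the set of outgoing edges into pairs (with at most one singleton when $d^+(v)$ is odd) and colour each pair with a fresh colour; this gives a 2-bounded colouring of $G$.

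To check that no copy $K$ of $K_r^{(\ell)}$ is rainbow, I would fix such a copy and write $d^+_K(v)$ for the number of its edges with tail $v$. The identity $\sum_{v \in V(K)} d^+_K(v) = \binom{r}{\ell}$ combined with averaging produces a vertex $v_0 \in V(K)$ with $d^+_K(v_0) \ge \lceil \binom{r}{\ell}/r \rceil$; whenever this quantity strictly exceeds the number $\lceil d^+(v_0)/2 \rceil$ of pairs at $v_0$, pigeonhole forces two $K$-edges into the same pair. For the tight cases where strict pigeonhole fails, I would use the saturation $d^+_K(v_0) \le d^+(v_0) \le d$: when averaging is already at the maximum, $d^+_K(v_0) = d^+(v_0)$ must hold, so every outgoing edge at $v_0$ already lies in $K$ and any pair at $v_0$ is automatically a monochromatic pair of $K$-edges (using $d \ge 2$ throughout the lemma's range). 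In the remaining small-$r$ regime (such as $r = \ell+1$), where both arguments above are too weak, I would switch to a parallel orientation in which tails are $(\ell-1)$-subsets rather than vertices, pairing together edges sharing the same $(\ell-1)$-tail; an analogous Hall-type matching guarantees this orientation exists, and averaging over the $\binom{r}{\ell-1}$ tail subsets in $V(K)$ forces two $K$-edges to share a tail, hence a colour.

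I expect the main obstacle to be the clean case distinction covering every admissible $(\ell, r)$ with $2 \le \ell \le r-1$ outside the two excluded pairs -- in particular boundary cases such as $(\ell, r) \in \{(2, 5), (4, 5)\}$ where neither orientation obviously suffices and one must combine the two schemes or invoke local structural information at each vertex or $(\ell-1)$-subset (for instance pairing outgoing edges so that edges likely to co-occur in a common $K_r^{(\ell)}$-copy through $v_0$ are preferentially matched together). The two excluded pairs $(2, 3)$ and $(3, 4)$ are precisely those in which no such scheme can succeed: for $(2, 3)$, the graph $K_4$ is a classical 2-bounded anti-Ramsey obstruction for $K_3$ (every 2-bounded edge colouring of $K_4$ contains a rainbow triangle), and an analogous small obstruction or breakdown of the pigeonhole prevents the scheme from completing in the $K_4^{(3)}$ case.
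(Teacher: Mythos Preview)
Your orient-and-pair scheme is essentially Claim~\ref{claim:orientation} of the paper lifted to hypergraphs, but the numerics are far weaker than you anticipate, and the gap is not a matter of a few boundary cases.

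First a minor point: the Hall-type orientation only guarantees out-degree at most $d=\lceil m_\ell(K_r^{(\ell)})\rceil$, not $\lfloor m_\ell(K_r^{(\ell)})\rfloor$; for $(\ell,r)=(2,4)$ one has $m_2(K_4)=5/2$, and a $5$-regular graph such as $K_6$ certainly cannot be oriented with out-degree at most $2$.

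More seriously, with the correct bound $d$ your pigeonhole step needs $\lceil \binom{r}{\ell}/r\rceil>\lceil d/2\rceil$, and this fails for infinitely many admissible pairs: for $(2,4)$ and $(2,5)$ one gets $2\not>2$, for $(3,5)$ one gets $2\not>3$, for $(3,6)$ and $(3,7)$ one gets $4\not>4$ and $5\not>5$, and so on. Your ``saturation'' escape does not rescue these: the averaging lower bound $d^+_K(v_0)\ge\lceil\binom{r}{\ell}/r\rceil$ is typically strictly below $d$, so nothing forces $d^+_K(v_0)=d^+(v_0)$. Concretely, take $G=K_6$ and a $K_4$-copy $K$ inside it: one can orient $K_6$ with all out-degrees at most $3$ so that within $K$ every vertex has out-degree at most $2$, and then an adversarial pairing (match each $K$-edge with a non-$K$-edge at its tail) leaves $K$ rainbow. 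Your fallback $(\ell-1)$-tail orientation for $r=\ell+1$ fares no better: a copy of $K_{\ell+1}^{(\ell)}$ has $\ell+1$ edges but $\binom{\ell+1}{2}$ candidate $(\ell-1)$-tails, so averaging gives only $2/\ell<1$ edges per tail for $\ell\ge 3$, and two $K$-edges need not share a tail.

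The paper proceeds entirely differently, by induction on $\ell$ via vertex links. If $G$ is a vertex-minimal counterexample then for every $u$ the $(\ell-1)$-uniform link $G_u$ satisfies $G_u\aramcolk[2] K_{r-1}^{(\ell-1)}$ (Claim~\ref{obs:link_col}); the induction hypothesis forces $m(G_u)>m_{\ell-1}(K_{r-1}^{(\ell-1)})$, and a short density computation then contradicts $m(G)\le m_\ell(K_r^{(\ell)})$. The base cases are $\ell=2$ (Lemma~\ref{lemma:nbounded_general}, itself a mix of orientation arguments, minimum-degree stripping, and an ad hoc treatment of $K_4$) and, for the $r=\ell+1$ branch, $(\ell,r)=(4,5)$ via the hand analysis of Lemma~\ref{lem:four_critical}. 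In short, the ``boundary cases'' you flag are not a fringe to be patched but the actual substance of the lemma, and they require the link-reduction machinery rather than a global orientation.
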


\begin{proof}[Proof of Theorem \ref{thm:2_bnd}]
Let $\ell \ge 2$ be an integer and consider some strictly $\ell$-balanced $\ell$-graph $F$ which satisfies one of the conditions of the theorem and let $c$ be a constant given by Corollary \ref{lemma:main_density} when applied to $F$. Let $G \sim \Hknp$ for $p$ which we will specify later. We use  Algorithm~\ref{algo:ar_2bnd} to find a $2$-bounded coloring of $G$ without a rainbow $F$-copy.

{\LinesNumbered
\begin{algorithm}
  \DontPrintSemicolon
  $\hat G \leftarrow G$\;
  $\textrm{col} \leftarrow 0$\;
  \While{$\exists e_1, e_2 \in E(\hat G) \; : \; e_1 \equiv_F e_2$ in $\hat G$}{
  	color $e_1, e_2$ with $\textrm{col}$\;
  	$\hat G \leftarrow \hat G \setminus \{e_1, e_2\}$ and $\textrm{col} \leftarrow \textrm{col} + 1$
  }
  \While{$\exists e \in E(\hat G) \; : \; e$ does not belong to an $F$-copy}{
  	color $e$ with $\textrm{col}$\;
  	$\hat G \leftarrow \hat G \setminus \{e\}$ and $\textrm{col} \leftarrow \textrm{col} + 1$
  }
 Remove isolated vertices in $\hat G$\; \label{line:proper_after_removing}
  $\{B_1, \ldots, B_k\} \leftarrow $ $F$-blocks obtained by applying Lemma \ref{lemma:block_split} with $\hat G$\;
  Color ($2$-bounded) each $B_i$ without a rainbow $F$-copy using a distinct set of colors
  \vspace{0.2cm}
  \caption{$2$-bounded colouring of $G$ without rainbow $F$-copy.} \label{algo:ar_2bnd}
\end{algorithm}}

The only difference between Algorithm \ref{algo:ar_proper} and Algorithm \ref{algo:ar_2bnd} is in the condition in line 3. In particular, in Algorithm \ref{algo:ar_2bnd} we don't require edges $e_1$ and $e_2$ to be disjoint. Following the same lines as in the proof of Theorem \ref{thm:ar_proper} together with Lemma \ref{lemma:nbounded_general} (provided $p \le cn^{-1/m_2(F)}$), Lemma \ref{lemma:nbounded_C4} (provided $F\cong C_4$ and $p \ll n^{-1/m_2(F)}$) and Lemma \ref{lemma:nbounded_hyper} (provided $p \le cn^{-1/m_\ell(F)}$) shows that w.h.p. $G$ is such that the Algorithm \ref{algo:ar_2bnd} finds the desired colouring.
\end{proof}

\subsubsection{Proof of Lemmas \ref{lemma:nbounded_general} and \ref{lemma:nbounded_C4}}

Proof of Lemma \ref{lemma:nbounded_general} splits into a couple of cases. We first state claims which cover these cases. Throughout this section, we say that a $2$-bounded coloring of edges incident to some vertex $v$ is \emph{maximal} if all but at most one color appears exactly twice.

\begin{claim} \label{claim:min_degree_col}
Let $G$ and $F$ be graphs such that $m(G) < \delta(F) - 1/2$. Then $G \naramcolk[2] F$.
\end{claim}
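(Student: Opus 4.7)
The plan is to produce an explicit $2$-bounded coloring of $G$ via a degeneracy argument, relying on the fact that in any copy of $F$ the ``highest'' vertex in a suitable ordering must have many edges going backwards, forcing a color repetition.

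First, I would apply the degeneracy lemma (Lemma~\ref{thm:k-deg}) to $G$, which together with the hypothesis $m(G) < \delta(F) - 1/2$ (so $2m(G) < 2\delta(F)-1$) yields an ordering $v_1, \ldots, v_n$ of $V(G)$ in which every $v_i$ has at most $\lfloor 2m(G)\rfloor \le 2\delta(F) - 2$ neighbors among $\{v_1,\ldots,v_{i-1}\}$. Call the edges joining $v_i$ to its earlier neighbors the back-edges of $v_i$; each edge of $G$ is a back-edge of exactly one vertex, namely its higher-indexed endpoint.

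Next, for every vertex $v_i$ I would pair up its back-edges arbitrarily (leaving a single unmatched back-edge if their number is odd), assign a fresh color to each pair, and a fresh color to each unmatched back-edge, using entirely disjoint palettes across vertices. Since each edge of $G$ receives a color from exactly one such group, the resulting coloring is $2$-bounded.

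To rule out rainbow copies of $F$, I would consider an arbitrary $F$-copy $F' \subseteq G$ and let $v_i$ be the vertex of $V(F')$ of largest index. Every $F'$-edge incident to $v_i$ goes to an earlier vertex of the ordering, so at least $\deg_{F'}(v_i) \ge \delta(F)$ back-edges of $v_i$ lie in $E(F')$. On the other hand, $v_i$'s back-edges in $G$ are partitioned into at most $\lceil (2\delta(F)-2)/2\rceil = \delta(F) - 1$ color classes, so by pigeonhole two of the $\ge \delta(F)$ back-edges of $v_i$ inside $F'$ share a color. Hence $F'$ is not rainbow, and $G \naramcolk[2] F$.

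The proof has no real obstacle; the only subtle point to watch is the floor/ceiling bookkeeping, where the strict inequality $m(G) < \delta(F) - 1/2$ is precisely what pushes $\lfloor 2m(G)\rfloor$ down to $2\delta(F) - 2$ and thereby makes the number of color classes at each vertex drop strictly below $\delta(F)$.
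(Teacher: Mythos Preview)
Your proof is correct and follows essentially the same idea as the paper's: pair up the edges at a vertex of small ``back-degree'' so that in any rainbow subgraph that vertex has degree at most $\delta(F)-1$. The only difference is presentational---the paper argues by a minimal counterexample, peeling off one minimum-degree vertex at a time, whereas you unfold this induction into an explicit global coloring via the degeneracy ordering; the floor/ceiling arithmetic and the pigeonhole step are identical.
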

\begin{proof}
Consider some graph $F$ and assume towards a contradiction that there exists a graph $G$ on $n$ vertices with $m(G) < \delta(F) - 1/2$ such that $G \aramcolk[2] F$. Furthermore, let us assume that $G$ is a minimal such graph with respect to the number of vertices. It then follows from
$$\sum_{v \in V(G)} \deg(v) = 2e(G) \le 2m(G) n$$
that there exists a vertex $u \in V(G)$ with $\deg(u) \le 2m(G) < 2\delta(F) - 1$. Since $\deg(u) \in \mathbb{Z}$ we can further improve this bound to $\deg(u) \le \lfloor 2m(G) \rfloor \leq 2(\delta(F) - 1)$. Now consider an arbitrary maximal $2$-bounded coloring of the edges incident to $u$ and color $G - \{u\}$ using the minimality assumption. Then in any rainbow subgraph of $G$ the vertex $u$ has degree at most $\delta(F) - 1$, thus $u$ cannot belong to a rainbow $F$-copy. However, as there are no rainbow $F$-copies in $G - \{u\}$ we have a $2$-bounded coloring of $G$ without a rainbow $F$-copy, which is a contradiction. 
\end{proof}

The proof of the next claim uses similar ideas as the proof of Lemma \ref{lemma:ar_proper_cases} in the case of cycles.

\begin{claim} \label{claim:2_deg_adj}
Let $G$ and $F$ be graphs such that $m(G) < \delta(F) - 2/7$, $\delta(F) \ge 2$ and $F$ does not contain two adjacent vertices of degree $\delta(F)$. Then $G \naramcolk[2] F$.
\end{claim}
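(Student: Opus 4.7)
The approach is to take a minimum counterexample $G$ (in the number of vertices) and derive a contradiction by combining a vertex-deletion argument in the style of Claim~\ref{claim:min_degree_col} with a dedicated use of the structural hypothesis on $F$. First I would show that the minimum degree of $G$ is at least $2\delta(F)-1$: if some $u$ had $\deg_G(u)\le 2\delta(F)-2$, then by minimality $G-u$ admits a $2$-bounded colouring without rainbow $F$-copy, and extending maximally $2$-boundedly at $u$ with fresh colours gives rainbow degree $\le \delta(F)-1$ at $u$, so $u$ cannot lie in a rainbow $F$-copy (every vertex of $F$ has degree $\ge \delta(F)$), contradicting that $G$ is a counterexample. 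Combined with $2e(G) \le 2m(G)n<(2\delta(F)-4/7)n$, this forces $\delta(G)=2\delta(F)-1$, and the same estimate pins down $|V_-|>\tfrac{4}{7}n$, where $V_-:=\{v\in V(G):\deg_G(v)=2\delta(F)-1\}$.

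The next step — and, in the spirit of the cycle case of Lemma~\ref{lemma:ar_proper_cases}, the crucial use of the density constant $2/7$ — is to show that $V_-$ is not independent. If it were, every edge incident to a $V_-$-vertex would have its other endpoint outside $V_-$, yielding
\[
 e(G)\ \ge\ (2\delta(F)-1)\,|V_-|\ >\ \frac{(2\delta(F)-1)\cdot 4}{7}\,n\ =\ \frac{8\delta(F)-4}{7}\,n,
\]
whereas $e(G) < (\delta(F)-2/7)n = (7\delta(F)-2)n/7$; comparing the two gives $\delta(F)<2$, contradicting the hypothesis.

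Finally, when there is an edge $\{u,v\}$ with $u,v\in V_-$ I would invoke minimality on $G\setminus\{u,v\}$ to obtain a $2$-bounded rainbow-free colouring, assign a fresh singleton colour $\alpha$ to $\{u,v\}$, and colour the remaining $2\delta(F)-2$ edges at $u$ (resp.\ $v$) maximally $2$-boundedly with fresh colours. The rainbow degrees of $u$ and $v$ are then exactly $\delta(F)$. Any new rainbow $F$-copy must contain $u$ or $v$; say $u$, mapped to some $x\in V(F)$. Rainbow degree equality $\deg_F(x)=\delta(F)$ is forced, and since $u$ uses \emph{all} of its $\delta(F)$ available colours in the copy, it must use the unique $\alpha$-edge $\{u,v\}$. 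Hence $v$ lies in the copy, mapped to a neighbour $y$ of $x$, and the same argument gives $\deg_F(y)=\delta(F)$. This contradicts the assumption that $F$ contains no two adjacent vertices of degree $\delta(F)$.

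The main obstacle is that the density bound $m(G)<\delta(F)-2/7$ is too weak to directly rule out a vertex of degree $2\delta(F)-1$ by the Claim~\ref{claim:min_degree_col} trick (a maximal $2$-bounded colouring of such a vertex leaves rainbow degree exactly $\delta(F)$, which is not forbidden), so the structural hypothesis on $F$ has to carry the proof at exactly that borderline. The constant $2/7$ is then tuned so that the edge-counting in the "$V_-$ independent" case just fails for the extremal value $\delta(F)=2$.
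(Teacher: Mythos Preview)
Your proposal is correct and follows essentially the same approach as the paper: a minimal counterexample, the observation that $\delta(G)\ge 2\delta(F)-1$, the edge-count showing that the set of degree-$(2\delta(F)-1)$ vertices has size greater than $4n/7$, and the structural hypothesis on $F$ applied to an adjacent pair of such vertices via a maximal $2$-bounded extension at $u$ and $v$. The only difference is cosmetic: the paper first uses the structural hypothesis to prove that $V_-$ \emph{is} independent and then derives the density contradiction, whereas you first show that independence of $V_-$ would contradict the density bound and then apply the structural hypothesis to an edge inside $V_-$; the two orderings are logically equivalent.
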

\begin{proof}
Let us consider some graph $F$ as in the statement of the claim and assume towards a contradiction that there exists a graph $G$ on $n$ vertices with $m(G) < \delta(F) - 2/7$ such that $G \aramcolk[2] F$. Furthermore, assume that $G$ is a minimal such graph with respect to the number of vertices.

First, we can assume that $\delta(G) \ge 2\delta(F) - 1$ as otherwise the claim follows from the same arguments as in the proof of Claim \ref{claim:min_degree_col}. Furthermore, similarly as in the proof of the cycle case of Lemma \ref{lemma:ar_proper_cases} we can show that $G$ does not contain two adjacent vertices $v_1, v_2 \in V(G)$ with $\deg(v_1) = \deg(v_2) = 2\delta(F) - 1$. Indeed, assume that two such vertices $v_1, v_2 \in V(G)$ exist. Then we color $G\setminus\{v_1,v_2\}$ by the minimality assumption without a rainbow $F$-copy, assign a new color to the edge $\{v_1, v_2\}$ and color the remaining edges incident to $v_1$ and $v_2$ both by a maximal $2$-bounded coloring. Then the degree of $v_1$ and $v_2$ in any rainbow subgraph $R \subseteq G$ is at most $\delta(F)$. If $\{v_1, v_2\} \subseteq R$ then $R \ncong F$ since $F$ does not contain two adjacent vertices of degree $\delta(F)$. Otherwise,  $v_1$ and $v_2$ can have degree at most $\delta(F)-1$ in $R$, which again implies that $R \ncong F$ or $v_1,v_2\not\in R$. Therefore, any rainbow $F$-copy has to lie completely in $G - \{v_1, v_2\}$ which is not possible. 

To summarize, we have $\delta(G) \ge 2\delta(F) - 1$ and the set $S \subseteq V(G)$ of all the vertices of degree exactly $2\delta(F) - 1$ is an independent set. We estimate the number of edges in $G$ as follows,
\begin{align*}
2m(G) n &\ge \sum_{v \in V(G)} \deg(v) \ge |S|(2\delta(F) - 1) + (n - |S|)2\delta(F) = n \cdot 2\delta(F) - |S|
\end{align*}
and thus $|S| \ge 2n (\delta(F) - m(G))$. Now $m(G) < \delta(F) - 2/7$ implies that $|S| > 4/7 \cdot n$. Since $S$ is an independent set, we further have
\begin{multline*}
(\delta(F) - 2/7)n \ge m(G)n\ge e(G) \ge e(S, V(G) \setminus S) \geq \\ |S| \cdot (2\delta(F) - 1) > n(8/7 \cdot \delta(F) - 4/7),
\end{multline*}
which easily implies $\delta(F) < 2$, hence a contradiction. Therefore, such graph a $G$ does not exist.
\end{proof}

\begin{claim} \label{claim:orientation}
Let $F$ and $G$ be graphs such that 
\begin{enumerate}[(i)]
\item $\lceil m(G) / 2 \rceil < m(F)$ or
\item $\lceil m(G) / 2 \rceil = m(F)$, $m(G) < \lceil m(G) \rceil$ and $\lceil m(G) \rceil$ is odd.
\end{enumerate}
Then $G \naramcolk[2] F$.
\end{claim}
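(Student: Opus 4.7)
The plan is to build a $2$-bounded colouring of $G$ from an orientation. I would set $k := \lceil m(G) \rceil \in \mathbb{N}$, apply Lemma~\ref{thm:k-orient} to obtain an orientation $\sigma$ of $G$ with $d_\sigma^+(v) \le k$ at every vertex, and then, at each vertex, pair up the out-edges arbitrarily (with at most one singleton when the out-degree is odd), assigning a fresh colour to every pair and every singleton. By construction this is a $2$-bounded edge-colouring of $G$.

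The key observation is a density bound on any rainbow subgraph $R \subseteq G$. Since the two edges sharing a colour share a common tail, $R$ picks at most one out-edge per colour at each vertex, which gives $d_R^+(v) \le \lceil d_\sigma^+(v)/2 \rceil \le \lceil k/2 \rceil$ for every $v \in V(R)$. Summing over $V(H)$ for any $H \subseteq R$ yields $e(H) \le v(H)\lceil k/2 \rceil$, so $m(R) \le \lceil k/2 \rceil = \lceil m(G)/2 \rceil$. In case (i) this bound is strictly less than $m(F)$, so $R$ cannot contain $F$ as a subgraph (a copy of $F$ inside $R$ would force $m(R) \ge m(F)$), and in particular no rainbow $F$-copy exists.

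For case (ii) the bound $\lceil m(G)/2 \rceil = m(F)$ is tight, so the plan is to leverage the strict inequality $m(G) < k$ together with the parity hypothesis that $k$ is odd in order to rule out the equality case. Writing $F^* \subseteq F$ for a densest subgraph, so $e(F^*)/v(F^*) = (k+1)/2$, and $R^* \subseteq R$ for the corresponding copy in a hypothetical rainbow $R \cong F$, the bound $d_{R^*}^+(v) \le (k+1)/2$ together with $e(R^*) = v(R^*)(k+1)/2$ forces equality at every vertex of $R^*$; since $k$ is odd and $d_\sigma^+(v) \le k$, this in turn forces $d_\sigma^+(v) = k$ for every $v \in V(R^*)$. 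I would then choose $\sigma$ among all valid orientations to minimise the set $V_k$ of \emph{saturated} vertices, and argue by a path-reversal exchange that this minimality, combined with $m(G) < k$ (and hence $\sum_v d_\sigma^+(v) = e(G) < k\, v(G)$), is incompatible with the requirement $V(R^*) \subseteq V_k$: reversing a directed path from $V_k$ to a non-saturated endpoint either decreases $|V_k|$ or, iterating, allows one to strip saturation from a vertex inside $V(R^*)$.

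The main obstacle is case (ii). The density computation in case (i) is a clean application of Lemma~\ref{thm:k-orient}, but in case (ii) the inequality is tight on the boundary and both pieces of the hypothesis (strictness $m(G) < k$ and oddness of $k$) are needed at once; the technically delicate step is the exchange argument showing that the minimum-saturation orientation must leave a non-saturated vertex inside every copy of $F^*$, which is what one needs to convert the parity gain into an actual contradiction.
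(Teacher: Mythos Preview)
Your treatment of case~(i) is correct and essentially the same as the paper's: orient with maximum out-degree $k:=\lceil m(G)\rceil$, pair out-edges arbitrarily, and bound the density of any rainbow subgraph by $\lceil k/2\rceil<m(F)$.

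Case~(ii), however, has a genuine gap. You correctly deduce that in a hypothetical rainbow $R\cong F$, every vertex of the densest part $R^*$ must be saturated, so $V(R^*)\subseteq V_k$. But the path-reversal step does not close the argument. Reversing a directed path from $v\in V_k$ to a non-saturated endpoint $u$ need not decrease $|V_k|$: if $d_\sigma^+(u)=k-1$, the reversal merely transfers saturation from $v$ to $u$. ``Iterating'' does not help either, because each reversal changes $\sigma$ and therefore the pairing and the colouring; after the new colouring is in place, $R^*$ has no reason to remain rainbow, so the requirement $V(R^*)\subseteq V_k$ that you are trying to contradict has evaporated. There is no invariant surviving the exchange, and the circularity cannot be removed by choosing $\sigma$ cleverly: with an \emph{arbitrary} pairing one simply cannot exclude the equality case $d_{R^*}^+(v)=(k+1)/2$ at every vertex.

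The paper avoids this entirely by using a \emph{structured} pairing rather than an arbitrary one. From $m(G)<k$ it follows that every oriented subgraph of $G$ contains a vertex of out-degree $<k$, so one can greedily order $V(G)=\{v_1,\dots,v_n\}$ so that $|N_i|\le k-1$, where $N_i:=N^+(v_i)\cap\{v_{i+1},\dots,v_n\}$. At each $v_i$ one first pairs the out-edges going to $N_i$ among themselves, and only afterwards the remaining out-edges. Now for any rainbow $R$ and any $R'\subseteq R$, the vertex $v_i$ of smallest index in $R'$ has all its out-neighbours in $R'$ inside $N_i$, hence rainbow out-degree at most $\lceil(k-1)/2\rceil=(k-1)/2<m(F)$ (this is exactly where oddness of $k$ is used), while every other vertex contributes at most $(k+1)/2=m(F)$. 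Thus $d(R')<m(F)$ for every $R'\subseteq R$, i.e.\ $m(R)<m(F)$, and no rainbow $F$-copy exists. The crucial idea you are missing is this degeneracy-type ordering coupled with the separated pairing, which converts the strict inequality $m(G)<k$ into a strict out-degree drop at one vertex of \emph{every} rainbow subgraph.
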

\begin{proof}
Let $F$ and $G$ be graphs which satisfy condition $(i)$ of the claim. By Lemma~\ref{thm:k-orient} there exists an orientation of the edges of $G$ such that each vertex has out-degree at most $\lceil m(G) \rceil$. Let us consider one such orientation and arbitrarily pair the out-edges incident to each vertex. Assigning the same color to edges in each pair, in any rainbow (oriented) subgraph $R \subseteq G$ we have for the out-degree of any vertex $v\in V(R)$
\begin{equation} \label{eq:orient}
\deg^+_R(v) \le \left\lceil \frac{\lceil m(G) \rceil}{2} \right\rceil = \left\lceil \frac{m(G)}{2} \right\rceil < m(F).
\end{equation} 
In particular, the density of $R$ is strictly smaller than $m(F)$ thus $R \ncong F$.

Let now $F$ and $G$ be graphs such that  condition $(ii)$ holds. As in the previous case, let us fix an orientation of the edges of $G$ such that each vertex has out-degree at most $\lceil m(G) \rceil$. Note that in every (oriented) subgraph $G' \subseteq G$ there exists a vertex with out-degree strictly smaller than $\lceil m(G) \rceil$ as otherwise we would have that the density of such a subgraph is $\lceil m(G) \rceil > m(G)$. Therefore, we can greedily arrange the vertices of $G$ into a sequence $v_1, \ldots, v_n$ such that $N_i := N^+(v_i) \cap \{v_{i+1}, \ldots, v_{n}\}$ is of size at most $\lceil m(G) \rceil-1$. Now the coloring strategy is as follows: for each vertex $v_i$, first arbitrarily pair all the out-edges corresponding to $N_i$ and then all other out-edges incident to $v_i$ and assign a new color to each pair. It remains to prove that there are no rainbow $F$-copies under such  coloring.

Consider some rainbow subgraph $R \subseteq G$. It follows from the pairing strategy that every vertex in $R$ has out-degree at most $\lceil \lceil m(G) \rceil / 2\rceil = \lceil m(G)/ 2 \rceil = m(F)$. Now consider
the vertex $v_i \in V(R)$ with the smallest index $i$ among all the vertices in $R$. Observe that all out-neighbours of $v_i$ in $R$ have index larger than $i$. Since $|N_i| \le \lceil m(G) \rceil - 1$ the pairing strategy ensures that the out-degree of $v_i$ in $R$ is at most
$$ \left\lceil \frac{\lceil m(G) \rceil - 1}{2} \right\rceil < \left\lceil \frac{\lceil m(G) \rceil}{2} \right\rceil = m(F), $$
where the strict inequality follows from the fact that $\lceil m(G) \rceil$ is odd. Thus all the vertices in $R$ have out-degree at most $m(F)$ and at least one vertex has out-degree strictly smaller than $m(F)$. Therefore, the density of any rainbow subgraph $R$ is strictly smaller than $m(F)$ hence there is no rainbow $F$-copy in $G$.
\end{proof}

It remains to cover the case $F = K_4$.

\begin{lemma} \label{lemma:k4_2bnd}
Let $G$ be a graph such that $m(G) \le m_2(K_4) = 2.5$. Then $G \naramcolk[2] K_4$.
\end{lemma}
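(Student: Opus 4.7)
I would induct on $|V(G)|$. For the base case $|V(G)|\le 5$ we have $m(G)\le 2$, so Claim~\ref{claim:orientation}(i) applies directly (since $\lceil m(G)/2\rceil\le 1<3/2=m(K_4)$).

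For the induction step, if $G$ contains a vertex $v$ with $\deg_G(v)\le 4$, I would remove $v$, inductively find a $2$-bounded colouring of $G - v$ without a rainbow $K_4$, and extend it by partitioning the at most four edges incident to $v$ into at most $\lceil\deg(v)/2\rceil\le 2$ colour classes of size at most $2$ using fresh colours. Every $K_4$-copy through $v$ uses exactly three edges at $v$, which therefore lie in at most two classes, so two of them share a colour and no new rainbow $K_4$ arises.

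Otherwise $\delta(G)\ge 5$, and combined with $2e(G)\le 5|V(G)|$ this forces $G$ to be $5$-regular with $m(G)=5/2$; I handle each connected component independently. The key subcase is $G=K_6$, which I would colour explicitly using the six ``cherries'' (pairs of adjacent edges)
\[
\{12,15\},\ \{23,26\},\ \{13,34\},\ \{24,45\},\ \{35,56\},\ \{16,46\}
\]
as six colour classes, together with the disjoint pair $\{14,25\}$ and the singleton $\{36\}$. These six cherries span the triples $\{1,2,5\}$, $\{2,3,6\}$, $\{1,3,4\}$, $\{2,4,5\}$, $\{3,5,6\}$, $\{1,4,6\}$, and by inspection every $4$-element subset of $\{1,\dots,6\}$ contains at least one of these triples, so every copy of $K_4$ in $K_6$ contains both edges of some cherry and thus has a repeated colour. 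For a connected $5$-regular $G\ne K_6$ we have $e(G[N(v)])\le 9$ for every $v$ (otherwise $\{v\}\cup N(v)$ would induce a $K_6$ component); I would then fix an orientation of out-degree at most $3$ (Lemma~\ref{thm:k-orient}) and carefully pair out-edges at each vertex, exploiting the fact that with average out-degree $5/2$ some vertex of every $K_4$-copy has total out-degree at most $2$ and can be made to yield an automatic colour collision.

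The main obstacle is the $5$-regular case. The clean pigeonhole of the $\delta(G)\le 4$ reduction fails at degree $5$ (a $2+2+1$ partition of five edges does not force a repeated colour among any three of them), so both the explicit colouring for $K_6$ and the orientation-plus-pairing argument for larger $5$-regular graphs require careful ad hoc analysis.
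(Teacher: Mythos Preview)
Your reduction to the $5$-regular case and your explicit $K_6$ colouring are both correct (and your $K_6$ colouring via the six cherries is a nice alternative to the paper's). The gap is the final step, the $5$-regular case with $G\neq K_6$.

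The assertion ``with average out-degree $5/2$ some vertex of every $K_4$-copy has total out-degree at most $2$'' is false. Average out-degree $5/2$ only says \emph{some} vertex of $G$ has out-degree $\le 2$, not some vertex of every $K_4$. Concretely, take two vertex-disjoint copies of $K_4$ on $\{a,b,c,d\}$ and $\{e,f,g,h\}$ joined by a $2$-regular bipartite graph; this is $5$-regular and not $K_6$. Orient the first $K_4$ transitively ($a\to b,c,d$; $b\to c,d$; $c\to d$), direct both cross-edges at $a$ into $a$, and direct all six cross-edges at $b,c,d$ outward. Then $a,b,c,d$ all have out-degree exactly $3$ in $G$, so your ``automatic collision'' vertex does not exist in this $K_4$. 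More fundamentally, any pairing of out-edges only guarantees rainbow out-degree $\le 2$, hence rainbow density $\le 2$, which does not preclude a copy of $K_4$ (density $3/2$); the observation $e(G[N(v)])\le 9$ is recorded but never used.

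The paper does not use orientations here at all. It argues locally at a single vertex $v$: if some $w_1\in N(v)$ has at most two neighbours inside $N(v)$, say missing $w_2,w_3$, then the pairing $(\{v,w_2\},\{v,w_3\})$, $(\{v,w_4\},\{v,w_5\})$, $(\{v,w_1\})$ kills every $K_4$ through $v$, since any such copy would have to use $w_1$ together with one of $w_2,w_3$. If instead $\delta(G[N(v)])\ge 3$, then each $w_i$ has at most one neighbour outside $\{v\}\cup N(v)$, which forces every $K_4$ meeting $\{v\}\cup N(v)$ to lie entirely inside it; one then checks this is incompatible with $\delta(G[N(v)])\ge 3$ (unless the component is $K_6$), so $\{v\}\cup N(v)$ splits off as a smaller piece and minimality finishes. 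Your orientation idea would need a genuinely new ingredient to reach this conclusion.
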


\begin{proof}
Let us assume towards a contradiction that there exists a graph $G$ on $n$ vertices with $m(G) \le 2.5$ and such that $G \aramcolk[2] K_4$. Without loss of generality let $G$ be a minimal such graph with respect to the number of vertices.

First, observe that $G$ does not contain a vertex $v \in V(G)$ with $\deg(v) < 5$. Otherwise, by taking any  maximal coloring of edges incident to $v$, we have that no rainbow $K_4$-copy can contain $v$. Since it follows from the minimality of $G$ that there is no rainbow $K_4$-copy in $G \setminus \{v\}$ we get that $G$ does not contain a rainbow $F$-copy, thus a contradiction. Therefore, $\delta(G) \ge 5$ and since
$$\sum_{v \in V(G)} \deg(v) \le m(G) \cdot 2n \le m_2(K_4) \cdot 2n = 5 n$$
it follows that $G$ is $5$-regular. Observe that $G \ncong K_6$, as the coloring (see Figure~\ref{fig:k6})
\begin{align*}
& (\{v_1, v_2\}, \{v_1, v_3\}), (\{v_1, v_4\}, \{v_1, v_5\}), \\
& (\{v_1, v_6\}, \{v_5, v_6\}), (\{v_2, v_4\}, \{v_2, v_6\}),\\
& (\{v_3, v_4\}, \{v_3, v_6\}), (\{v_3, v_5\}, \{v_2, v_5\}), (\{v_4, v_5\}, \{v_4, v_6\})
\end{align*}
shows that $K_6 \naramcolk[2] K_4$.

\begin{figure}[h]
\center
\includegraphics[trim = 30mm 60mm 20mm 45mm, clip, scale = 0.2]{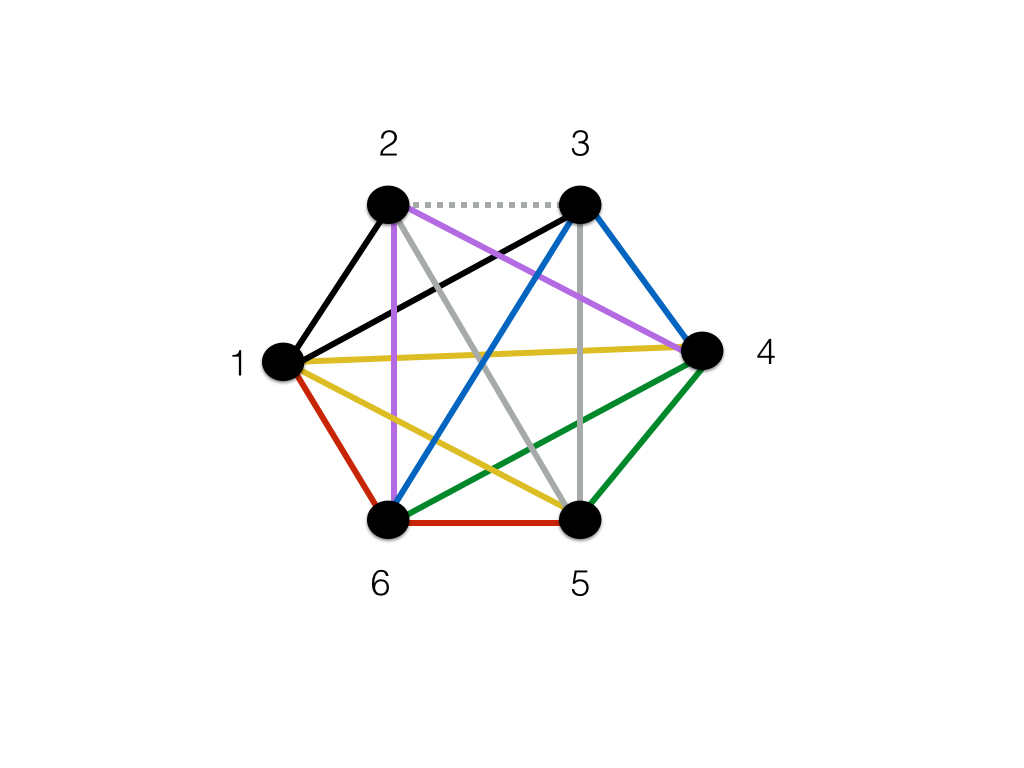}
\caption{A coloring of $K_6$ without a rainbow $K_4$-copy.}\label{fig:k6}
\end{figure}

Let now $v \in G$ be an arbitrary vertex and $N(v) = \{w_1, \ldots, w_5\}$. Assume first that 
$\delta(G[N(v)])\leq 2$ and w.l.o.g.\ let $w_1, w_2$ and $w_3$ be the vertices such that 
$\{w_1, w_2\}, \{w_1, w_3\} \notin E(G)$. Consider the following coloring of the edges incident to $v$:
$$
(\{v, w_1\} ), (\{v, w_2\}, \{v, w_3\}), (\{v, w_4\}, \{v, w_5\}).
$$
Now any possible rainbow $K_4$-copy which contains the vertex $v$ must also contain the vertex $w_1$ and one of the vertices from $\{w_2, w_3\}$. However, that is not possible as $w_1$ is not connected to any of $w_2$ and $w_3$. On the other hand, by the minimality of $G$ no rainbow $K_4$-copy lies completely in $G \setminus \{v\}$. Thus $G$ contains no rainbow $K_4$-copy, which is a contradiction with the choice of $G$.

Therefore, we can assume that $\delta(G[N(v)])\geq 3$. As $G$ is $5$-regular, this implies that every vertex $w_i\in N(v)$ has at most one neighbor in $V(G) \setminus (N(v)\cup\{v\})$. Thus, any $K_4$-copy
that contains a vertex from $N(v)\cup\{v\}$ can contain at most one vertex from $V(G) \setminus (N(v)\cup\{v\})$, which in turn implies that any such clique has to contain three vertices in $N(v)$. However, one easily checks that this can only be if one of the remaining two vertices in $N(v)$ has degree at most two within $G[N(v)]$, which we have already excluded. Thus, there exists no $K_4$-copy which contains a vertex in $N(v)$ and a vertex in $V(G) \setminus (N(v)\cup\{v\})$. We can thus color $G[N(v) \cup \{v\}]$ and $G[V(G) \setminus (N(v) \cup \{v\})]$ separately and by the minimality of $G$ a coloring without rainbow $K_4$-copy exists for both these graphs. This concludes the proof of the lemma.
\end{proof}

We are now ready to combine the previous claims. 

\begin{proof}[Proof of Lemma \ref{lemma:nbounded_general}]
Let us first consider a graph $F$ on four vertices. There exist only two such graphs that are strictly $2$-balanced: $C_4$ and $K_4$. Therefore, if $F$ is a graph on four vertices then $F \cong K_4$ and the conclusion of the lemma follows from Lemma \ref{lemma:k4_2bnd}. For the rest of the proof we assume that $F$ contains at least $5$ vertices and since $F$ is a strictly $2$-balanced graph we have $\delta(F) \ge 2$.

Let $m_2(F) = k + x$ for some $k \in \mathbb{N}$, $k\ge 1$ and $x \in [0, 1)$. Observe that $\delta(F) > m_2(F)$ as otherwise removing a vertex with degree at most $m_2(F)$ would result in a graph with the same or larger $2$-density, which cannot be since $F$ is strictly $2$-balanced. Thus $\delta(F) \ge k + 1$. If $x < 1/2$ then $m(G) < k + 1/2 = (k+1) - 1/2$ and the lemma follows from Claim \ref{claim:min_degree_col}. So we may assume in the following that $x\ge 1/2$.

One easily checks that
$$\frac{3}{4} v(F)^2 - v(F) > \binom{v(F)}{2} \ge e(F)$$
(as $v(F)\ge 5$) and thus
$$ \frac{e(F)}{v(F)} + 3/2 > \frac{e(F) - 1}{v(F) - 2}. $$
As $x \ge 1/2$ this implies $m(F) > m_2(F) - 3/2 \ge k - 1$. For $k\ge 3$ we therefore have
$$\lceil m(G) / 2 \rceil \le \lceil (k+1)/2 \rceil \stackrel{(k\ge 3)}{\leq} k - 1 < m(F),$$
and $G \naramcolk[2] F$ follows from Claim \ref{claim:orientation}. So from now on we may assume that $x\ge 1/2$ and $k\in\{1,2\}$.

Furthermore, if $F$ contains two adjacent vertices of degree $\delta(F)$ then from the fact that $F$ is strictly $2$-balanced and $v(F) \ge 5$ we have
$$ \frac{e(F) - 1 - (2\delta(F) - 1)}{v(F) - 2 - 2} < \frac{e(F) - 1}{v(F) - 2} $$
and so $(2\delta(F) - 1)/2 > m_2(F)\ge k+1/2$. Therefore, either $\delta(F) \ge k + 2$ or $\delta(F) = k + 1$ and $F$ does not contain two adjacent vertices of degree $\delta(F)$. In the first case we trivially have $m(G) \le m_2(F)< k + 1 < k + 2 - 1/2$ and the lemma follows again from Claim \ref{claim:min_degree_col}. In the latter case, if we additionally assume that $x < 5/7$ then
$$\delta(F) - 2/7 \ge k + 5/7 > k + x = m_2(F)$$
and the lemma follows from Claim \ref{claim:2_deg_adj}. Thus we may assume from now on that $x\ge 5/7$ and $k\in\{1,2\}$.

Finally, if $e(F) < (5v(F)^2 - 3v(F))/14$ then 
$$ \frac{e(F)}{v(F)} + \frac{5}{7} > \frac{e(F) - 1}{v(F) - 2},$$
and $x \ge 5/7$ implies that $m(F) > m_2(F) - 5/7 \ge k$. Similarly as before we have
$$ \lceil m(G) / 2 \rceil \le \lceil (k+1)/2 \rceil \le k < m(F), $$
for  $k \in \{1,2\}$ and the lemma follows from Claim \ref{claim:orientation}.

To summarize, we have shown that $G \naramcolk[2] F$ unless the following three conditions hold simultaneously:
\begin{enumerate}[(a)]
\item $x \ge 5/7$,
\item $k \in \{1,2\}$ and
\item $e(F) \ge (5v(F)^2 - 3v(F))/14$.
\end{enumerate}
Let us consider some $F$ such that all three properties apply. Then from (b) and (c) we have
\begin{equation}
3 > m_2(F) \ge \frac{(5v(F)^2 - 3v(F))/14 - 1}{v(F) - 2}. \label{eq:v_F}
\end{equation} 
A simple calculation yields that \eqref{eq:v_F} implies $v(F) < 7$. If $v(F) = 6$ then from (c) we have $e(F) \ge 12$ while from $m_2(F)<3$ we obtain $e(F) \le 12$. 
But then $m(F)\ge 2$ and $\lceil m_2(F) \rceil = 3$ and the lemma follows from the part $(ii)$ of Claim \ref{claim:orientation}. Otherwise, if $v(F) = 5$ then from (c) we have $e(F) \ge 8$ while from $m_2(F)<3$ we obtain  $e(F) \le 9$. However, for $e(F) \in \{8,9\}$ we have $m_2(F) \in \{2 + 1/3, 2 + 2/3\}$ thus $F$ does not satisfy (a). This finishes the proof.
\end{proof}

\begin{proof}[Proof of Lemma \ref{lemma:nbounded_C4}]
Assume towards a contradiction that $G$ is a graph on $n$ vertices such that $m(G) < m_2(C_4) = 3/2$ and $G \aramcolk[2] C_4$. Furthermore, let $G$ be a minimal such graph with respect to the number of vertices. Then
$$\sum_{v \in V(G)} \deg(v) \le 2m(G) n < 3n$$
implies that there exists a vertex $v \in V(G)$ with $\deg(v) \le 2$. Coloring $G\setminus\{u\}$ by the minimality assumption on $G$ and the two edges incident to $v$ with the same (new) color yields a coloring of $G$ with no rainbow $C_4$-copy, contradicting our choice of $G$. 

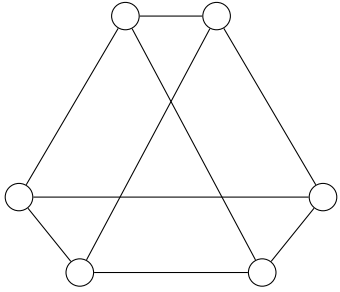
\begin{figure}[h]
\centering
\begin{tikzpicture}[scale = 0.8]
	\node [draw, circle] (0) at (-0.75, 3) {};
	\node [draw, circle] (1) at (0.75, 3) {};
	\node [draw, circle] (2) at (-2.5, 0) {};
	\node [draw, circle] (3) at (-1.5, -1.25) {};
	\node [draw, circle] (4) at (1.5, -1.25) {};
	\node [draw, circle] (5) at (2.5, 0) {};
	\draw (0) to (1);
	\draw (1) to (5);
	\draw (5) to (4);
	\draw (4) to (3);
	\draw (3) to (2);
	\draw (2) to (0);
	\draw (0) to (4);
	\draw (1) to (3);
	\draw (2) to (5); 
  \end{tikzpicture}
\caption{A counter-example for the  case $F = C_4$.} \label{fig:C6}
\end{figure}

For the second part of the lemma, consider the graph $C_6^{3+}$ given in  Figure~\ref{fig:C6}. It is easy to see that $m(C_6^{3+}) = 3/2$. Furthermore, it follows from the fact that the graph is $3$-regular that every pair of edges is contained  in at most two $C_4$-copies. As there are $9$ edges in $C_6^{3+}$, in every $2$-bounded coloring there are at most $4$ pairs of edges which are colored the same. It now follows from the previous observation that every such pair of edges can prevent at most two $C_4$-copies from being rainbow. However, there are $9$ copies of $C_4$, thus at least one copy has to be rainbow. This finishes the proof.
\end{proof}

\subsubsection{Proof of Lemma \ref{lemma:nbounded_hyper}}

We use the following notion of a \emph{link} in a hypergraph.

\begin{definition}[Hypergraph link]
Let $\ell \ge 2$ be an integer and $G$ an $\ell$-graph. Then for a vertex $v \in V(G)$ we define the \emph{link} of $v$ in $G$ to be the $(\ell-1)$-graph $G_v$ induced by the set of edges
$$\{e \setminus \{v\} \; : e \in E(G) \; \text{and} \; v \in e\; \}.$$
Furthermore, define the link of two vertices $v$, $w$ in $G$ to be  the $(\ell-2)$-graph $G_{v,w}$ induced by the set of edges 
$$\{e \setminus \{v,w\} \; : e \in E(G) \; \text{and} \; v,w \in e\; \}.$$
\end{definition}

We make a series of claims towards the proof of Lemma~\ref{lemma:nbounded_hyper}. 
\begin{claim}\label{obs:link_col}
Let $G$ be a vertex minimal $\ell$-graph such that $G\aramcolk[2] K^{(\ell)}_r$. Then
\[
 G_u \;\aramcolk[2]\; K_{r-1}^{(\ell - 1)}.
\]
for every vertex $u$. 
\end{claim}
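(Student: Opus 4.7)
The plan is a direct contradiction argument exploiting the vertex-minimality of $G$. Suppose for some vertex $u \in V(G)$ we had $G_u \naramcolk[2] K^{(\ell-1)}_{r-1}$. I would then combine a witnessing $2$-bounded colouring of $G_u$ with a $2$-bounded rainbow-$K_r^{(\ell)}$-free colouring of $G \setminus \{u\}$ (which exists by minimality) to produce a $2$-bounded rainbow-$K_r^{(\ell)}$-free colouring of the whole of $G$, contradicting $G \aramcolk[2] K_r^{(\ell)}$.

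Concretely, first I would invoke vertex-minimality: since $G$ is a vertex-minimal $\ell$-graph satisfying $G \aramcolk[2] K_r^{(\ell)}$, the smaller hypergraph $G \setminus \{u\}$ does not have this property, so there is a $2$-bounded edge-colouring $\chi_1$ of $G \setminus \{u\}$ containing no rainbow copy of $K_r^{(\ell)}$. Next, by the assumption for contradiction, choose a $2$-bounded colouring $\chi_2$ of the link $G_u$ with no rainbow copy of $K_{r-1}^{(\ell-1)}$. I would then define a colouring $\chi$ of $G$ as follows: on $E(G \setminus \{u\})$ use $\chi_1$, and on each edge $e \ni u$ in $G$ assign the colour $\chi_2(e \setminus \{u\})$, after first relabelling the palette of $\chi_2$ to be disjoint from that of $\chi_1$. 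Since $\chi_2$ is $2$-bounded and the bijection $e \mapsto e \setminus \{u\}$ carries edges of $G$ containing $u$ to edges of $G_u$, the induced colouring on the star at $u$ is $2$-bounded; together with the disjoint palettes this makes $\chi$ globally $2$-bounded.

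It remains to check that $\chi$ has no rainbow $K_r^{(\ell)}$. Any such copy $K$ either avoids $u$, in which case it is a rainbow $K_r^{(\ell)}$ in $G \setminus \{u\}$ under $\chi_1$, contradicting the choice of $\chi_1$; or $K$ contains $u$, in which case let $S = V(K) \setminus \{u\}$, so $G[\{u\}\cup S] \cong K_r^{(\ell)}$ and hence $G_u[S] \cong K_{r-1}^{(\ell-1)}$. The edges of $K$ split into those containing $u$, which are exactly the edges of this $K_{r-1}^{(\ell-1)}$-copy in $G_u$ (coloured by $\chi_2$), and those missing $u$ (coloured by $\chi_1$). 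Because the two palettes are disjoint, rainbowness of $K$ forces rainbowness of the $K_{r-1}^{(\ell-1)}$-copy on $S$ under $\chi_2$, contradicting the choice of $\chi_2$. Either way we reach a contradiction, so no such $u$ exists.

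The argument is almost mechanical; the only point that needs care is ensuring that the two colour palettes are disjoint so that no spurious rainbow collision is destroyed and, conversely, that every hypothetical rainbow $K_r^{(\ell)}$ through $u$ projects cleanly to a rainbow $K_{r-1}^{(\ell-1)}$ in the link. I do not anticipate any significant obstacle.
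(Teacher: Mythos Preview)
Your proposal is correct and follows essentially the same approach as the paper: assume some link $G_u$ admits a $2$-bounded colouring with no rainbow $K_{r-1}^{(\ell-1)}$, lift it to the edges through $u$, combine with a rainbow-$K_r^{(\ell)}$-free colouring of $G\setminus\{u\}$ obtained from minimality, and derive a contradiction. Your write-up is in fact a bit more careful than the paper's (you explicitly separate the palettes and check both cases for where a hypothetical rainbow clique could sit), but the underlying argument is identical.
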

\begin{proof}
Assume the contrary. Then there exists a $2$-bounded coloring $c_u$ of $G_u$ without a rainbow $K^{(\ell-1)}_{r-1}$-copy. Let $c$ be the partial coloring of $G$  given by
$$ c(e) := c_u(e \setminus \{u\}) $$
for all $e \in E(G)$ with $u \in e$. Then $u$ cannot belong to a rainbow $K_{r}^{(\ell)}$-copy in $G$. As we can also color $G\setminus\{u\}$ without a rainbow $K_{\ell+1}^{(\ell)}$-copy by the the minimality assumption on $G$, this thus contradicts the assumption of the claim $G\aramcolk[2] K^{(\ell)}_r$. 
\end{proof}

\begin{claim}\label{obs:except_1}
Let $G$ be a graph with at most $8$ edges. Then $G \aramcolk[2] K_3$ if and only if $G$ contains a copy of $K_4$. Furthermore, if $G[K] \cong K_4$ for some $K \subseteq V(G)$, then for every $T\in\binom{K}{3}$ there is a $2$-bounded colouring of $G$ with $G[T]$ being the only rainbow $K_3$-copy in $G$.
\end{claim}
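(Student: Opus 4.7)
I would decompose the claim into the two directions of the equivalence and the ``furthermore'' refinement. For the $(\Leftarrow)$ direction it suffices to prove $K_4\aramcolk[2]K_3$: in any $2$-bounded colouring of $K_4$ the colour classes of size two form a partial matching $M$ on the six edges. Two disjoint edges of $K_4$ cover all four vertices and hence lie in no common triangle, so only pairs of \emph{adjacent} edges in $M$ can prevent a rainbow triangle. Each such adjacent pair belongs to a unique triangle of $K_4$, but $|M|\le\lfloor 6/2\rfloor=3$ while $K_4$ contains four triangles, so some triangle must be rainbow.

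For the $(\Rightarrow)$ direction I would induct on $v(G)$. Whenever $v(G)\ge 6$, the bound $2e(G)\le 16$ forces some vertex $v$ with $\deg_G(v)\le 2$. Remove $v$, apply the induction hypothesis to $G-v$ (which is again $K_4$-free with at most $8$ edges), and extend: if $\deg(v)\le 1$ the incident edge, if any, lies in no triangle and receives a fresh colour; if $\deg(v)=2$ with neighbours $w_1,w_2$, assign both $vw_1,vw_2$ one common fresh colour, which keeps $2$-boundedness and breaks the only new possible triangle $vw_1w_2$. The only base case left is $\delta(G)\ge 3$: combined with $e(G)\le 8$ and $K_4$-freeness, a degree-sum check forces $v(G)=5$, $e(G)=8$ and degree sequence $(4,3,3,3,3)$, from which one checks that $G$ is (up to isomorphism) the Tur\'an graph $T(5,3)$ with parts $\{a_1,a_2\},\{b_1,b_2\},\{c\}$. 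For this single graph I would exhibit the explicit $2$-bounded pairing
\[
(a_1c,\,a_1b_1),\ (a_2c,\,a_2b_2),\ (b_1c,\,a_2b_1),\ (b_2c,\,a_1b_2),
\]
and directly verify that each of the four triangles $T_{ij}=\{a_ib_j,\,a_ic,\,b_jc\}$, $i,j\in\{1,2\}$, contains one of these pairs and therefore is not rainbow.

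For the furthermore statement, write $K=\{v_1,v_2,v_3,v_4\}$ and $T=\{v_1,v_2,v_3\}$. I would colour $K_4$ with the three pairs
\[
(v_1v_2,\,v_1v_4),\ (v_1v_3,\,v_3v_4),\ (v_2v_3,\,v_2v_4),
\]
using three distinct colours; then $G[T]$ receives three distinct colours while each of the other three triangles $v_iv_jv_4$ of $K_4$ contains exactly one of these pairs and so is not rainbow. To extend to all of $G$, note that $e(G)\le 8$ leaves at most two edges outside $K_4$; any triangle containing two or more vertices outside $K$ would require at least three outside edges, so the only additional triangle to worry about has the form $rv_iv_j$ with $r\in V(G)\setminus K$ and both $rv_i,rv_j$ lying outside $K_4$. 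If this happens, give both edges a common fresh colour; otherwise give each outside edge its own fresh colour.

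The main obstacle is isolating the base case of the induction in $(\Rightarrow)$: verifying that $\delta(G)\ge 3$, $K_4$-freeness and $e(G)\le 8$ together pin $G$ down to $T(5,3)$, and then pinning down a concrete $2$-bounded pairing for that graph. The rest reduces to routine pairing and degeneracy arguments.
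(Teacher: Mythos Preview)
Your proof is correct and follows essentially the same approach as the paper: both use a minimal-counterexample argument with low-degree vertex removal for the $(\Rightarrow)$ direction and an explicit pairing for the furthermore part. Where the paper writes ``one easily checks'' for $K_4\aramcolk[2]K_3$ and for the small cases $v(G)\in\{4,5\}$, you supply the counting argument and pin down the unique base-case graph $T(5,3)\cong W_4$ with an explicit pairing, which is welcome extra detail but not a different route.
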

\begin{proof}
One easily checks that $K_4 \aramcolk[2] K_3$, thus if $G$ contains $K_4$ then  $G \aramcolk[2] K_3$ as well. In the other direction, let $G$ be a vertex minimal graph with at most $8$ edges without a copy of $K_4$ such that $G \aramcolk[2] K_3$. If $v(G)\ge 6$ then $\delta(G)\le \lfloor16/6\rfloor= 2$, allowing thus a $2$-bounded colouring without a rainbow $K_3$-copy similar to the argument in Lemma~\ref{lemma:nbounded_C4}. Otherwise, for $v(G) \in \{4,5\}$ one easily checks that $G \naramcolk[2] K_3$, thus contradicting the choice of $G$.

For the furthermore-part, observe that if $G[K] \cong K_4$ for some $K \subseteq V(G)$, then $G$ contains at most two additional edges $e_1, e_2 \notin G[K]$. Let us assume that $K = \{v_1, v_2, v_3, v_4\}$ and, without loss of generality, $T = \{v_1, v_2, v_3\}$. 
Then the following $2$-bounded colouring has the required property:
\[
(e_1, e_2), (\{v_1, v_2\},\{v_1,v_3\}), (\{v_1,v_4\},\{v_4,v_2\}), (\{v_2,v_3\},\{v_3,v_4\}).
\]
\end{proof}

\begin{claim}\label{obs:except_2}
Let $G$ be a $3$-graph with at most $16$ edges and no isolated vertices.  
Then $G \aramcolk[2] K^{(3)}_4$ if and only if $G$ is isomorphic to a $3$-graph which consists of two copies of $K^{(3)}_5$ that share $4$ vertices. 
\end{claim}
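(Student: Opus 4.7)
The plan is to prove the two directions separately.

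\emph{The ``if'' direction.} Let $G^\star$ be the described graph, built on $\{a,b,c,d,e,f\}$ from $K_5^{(3)}$ on $\{a,b,c,d,e\}$ and $K_5^{(3)}$ on $\{a,b,c,d,f\}$. A direct count gives $e(G^\star) = 10 + 10 - 4 = 16$ and exactly $9 = 5 + 5 - 1$ copies of $K_4^{(3)}$ in $G^\star$ (five inside each $K_5^{(3)}$, with the $K_4^{(3)}$ on $\{a,b,c,d\}$ counted once). Any $2$-bounded coloring uses at most $8$ color classes of size $2$. Since two distinct $3$-edges lie in a common $K_4^{(3)}$ if and only if they meet in exactly $2$ vertices---in which case the $K_4^{(3)}$ is uniquely the one on their union---each color class of size $2$ can prevent at most one $K_4^{(3)}$ from being rainbow. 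Hence at least $9 - 8 = 1$ rainbow $K_4^{(3)}$ must survive.

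\emph{The ``only if'' direction.} Suppose $G$ satisfies the hypotheses and $G \aramcolk[2] K_4^{(3)}$, and take $G$ vertex-minimal. By Claim~\ref{obs:link_col}, every link $G_u \aramcolk[2] K_3$; whenever $d_G(u) \leq 8$, Claim~\ref{obs:except_1} forces $G_u \supseteq K_4$, whence $d_G(u) \geq 6$ and $u$ lies in a copy of $K_5^{(3)}$. A vertex with $d_G(u) \leq 5$ would yield a link on $<6$ edges that is anti-Ramsey for $K_3$ but cannot contain $K_4$, contradicting Claim~\ref{obs:except_1}; so $\delta(G) \geq 6$. Combined with $\sum_u d_G(u) = 3 e(G) \leq 48$ this gives $v(G) \leq 8$. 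Choosing a minimum-degree vertex $u$ yields $d_G(u) \leq \lfloor 48/v(G) \rfloor \leq 8$, so $u$ lies in a $K_5^{(3)}$ on $\{u\} \cup T_u$, accounting for $10$ hyperedges and leaving at most $6$ hyperedges outside.

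A case analysis on $v(G) \in \{5, 6, 7, 8\}$ then finishes the proof. For $v(G) = 5$ we have $G \subseteq K_5^{(3)}$; the $2$-bounded coloring pairing $(\{1,3,4\},\{2,3,4\})$, $(\{2,3,5\},\{2,4,5\})$, $(\{1,2,3\},\{1,2,5\})$, $(\{1,2,4\},\{1,4,5\})$, $(\{1,3,5\},\{3,4,5\})$ puts each of the five $K_4^{(3)}$ copies in $K_5^{(3)}$ under a same-color pair, so $K_5^{(3)} \naramcolk[2] K_4^{(3)}$, contradicting the anti-Ramsey assumption. For $v(G) \geq 7$, let $v_1, \ldots, v_k$ with $k := v(G) - 5 \geq 2$ be the vertices outside $\{u\} \cup T_u$, and let $a_j$ count the hyperedges outside the $K_5^{(3)}$ at $u$ that meet exactly $j$ of the $v_i$. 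Then $a_1 + a_2 + a_3 \leq 6$ (edge budget), yet $\sum_i d_G(v_i) = a_1 + 2a_2 + 3a_3 \geq 6k$ because every $v_i$ has $d_G(v_i) \geq 6$ and all its hyperedges lie outside the $K_5^{(3)}$ at $u$; combined with $a_2 \leq 5\binom{k}{2}$ and $a_3 \leq \binom{k}{3}$, a short calculation for $k \in \{2, 3\}$ is infeasible, ruling out $v(G) \geq 7$. Finally, in $v(G) = 6$ the unique vertex $v$ outside $\{u\} \cup T_u$ has all its $\geq 6$ hyperedges inside the $\leq 6$-edge budget, forcing $d_G(v) = 6$; since $d_G(v) \leq 8$, Claim~\ref{obs:except_1} places $v$ inside a $K_5^{(3)}$ on $\{v\} \cup T_v$ with $T_v$ a $4$-subset of the $5$-set $\{u\} \cup T_u$, exhibiting $G$ as exactly two $K_5^{(3)}$'s sharing the $4$ vertices $T_v$ and totaling $10 + 10 - 4 = 16$ edges, i.e.\ the described graph.

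The main obstacle will be the $v(G) \geq 7$ subcase: one has to rule out clever sharing of the few ``extra'' hyperedges among the vertices $v_1, \ldots, v_k$ while still respecting both the tight $16$-edge budget and the forced $\delta(G) \geq 6$. The $a_j$-counting above handles this cleanly, but requires careful setup of what exactly each $a_j$ counts and which bounds apply.
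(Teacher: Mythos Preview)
Your ``if'' direction and the explicit colouring for $v(G)=5$ are correct and match the paper. The gap is in your ``only if'' direction, at the assertion that once $G_u \supseteq K_4$ for a minimum-degree vertex $u$, the vertex $u$ lies in a copy of $K_5^{(3)}$. This does not follow: if $G_u$ contains a $K_4$ on $\{a,b,c,d\}$, you only know that the six triples $\{u,x,y\}$ with $x,y\in\{a,b,c,d\}$ are edges of $G$; you have no information about whether the four triples in $\binom{\{a,b,c,d\}}{3}$ are edges. Without them there is no $K_5^{(3)}$ through $u$, and your entire edge-budget analysis (the $10$ hyperedges of the $K_5^{(3)}$ on $\{u\}\cup T_u$, the $a_j$-counting, and the $v(G)=6$ endgame) has nothing to stand on. You repeat the same leap for $v$ in the $v(G)=6$ case; there the conclusion happens to be true, but only because $T_v \subseteq \{u\}\cup T_u$ and you have \emph{already assumed} a $K_5^{(3)}$ on $\{u\}\cup T_u$ --- it is not Claim~\ref{obs:except_1} that supplies the missing four triples.

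What is missing is precisely the ``furthermore'' clause of Claim~\ref{obs:except_1}. Since $d_G(u)\le 8$, for any triple $T\subseteq\{a,b,c,d\}$ one may $2$-colour $G_u$ so that $T$ is the \emph{only} rainbow triangle. If no $K_5^{(3)}$ passes through $u$, choose $T$ with $T\notin E(G)$; then $\{u\}\cup T$ is not a $K_4^{(3)}$ in $G$, so this colouring of the link forbids every rainbow $K_4^{(3)}$ through $u$, and combining it with a rainbow-free colouring of $G\setminus\{u\}$ (vertex-minimality) contradicts $G\aramcolk[2]K_4^{(3)}$. This is how the paper forces a $K_5^{(3)}$ in $G$. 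Once that is in place your $a_j$-counting for $v(G)\ge 7$ is valid but heavier than needed: the paper simply observes that any vertex $w$ outside the fixed $K_5^{(3)}$ satisfies $6\le d_G(w)\le 6$ (the upper bound because at most six hyperedges lie outside the $K_5^{(3)}$), so a single such $w$ absorbs all remaining hyperedges and $v(G)=6$ follows at once.
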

\begin{proof}
 If $G$ consists of two copies of $K^{(3)}_5$ that share $4$ vertices, then $v(G)=6$, 
 $e(G)=16$ and $G$ contains $9$ copies of $K^{(3)}_4$. Since any pair of edges coloured the 
 same can prevent at most one rainbow $K^{(3)}_4$-copy and in any $2$-bounded colouring of $G$ there are at most $8$ different pairs of edges which 
 are coloured the same,  it follows that one copy of $K^{(3)}_4$ will always be rainbow.  
 
In the other direction, let $G$ be a vertex-minimal $3$-graph on the vertex set $\{v_1, \ldots, v_n\}$ with at most $16$ edges such that $G \aramcolk[2] K^{(3)}_4$.  If $n \le 5$ then $G \subseteq K_5^{(3)}$ and the following $2$-bounded colouring of $K_5^{(3)}$ gives a contradiction with the choice of $G$:
\begin{align*}
&(\{v_1,v_2,v_5\},\{v_1,v_3,v_5\}), (\{v_1,v_4,v_5\},\{v_3,v_4,v_5\}), (\{v_2,v_4,v_5\},\{v_1,v_2,v_4\}),\\
& (\{v_2,v_3,v_4\},\{v_2,v_3,v_5\}), (\{v_1,v_2,v_3\},\{v_1,v_3,v_4\}).
\end{align*}
Therefore, from now on we can assume that $n \ge 6$. Next, let us assume towards the contradiction that $G$ does not contain a $K_5^{(3)}$-copy.  Let $v_i$ be a vertex  of minimum degree which is at most  $\lfloor16\cdot 3/n \rfloor\le 48/6= 8$. Then by Claims~\ref{obs:link_col} and~\ref{obs:except_1} and the minimality assumption on $G$, the link of $v_i$ contains a $K_4$-copy. 
As $G$ does not contain a $K^{(3)}_5$-copy, we know that there exists a $3$-subset $T$ of the vertices of a $K_4$-copy in $G_v$ such that $T \notin E(G)$.  
 Since $e(G_v)\le 8$, Claim~\ref{obs:except_1} asserts the existence of  a $2$-bounded colouring $c_v$ of $G_v$ such that the only rainbow $K_3$-copy is induced by $T$. 
By the minimality of $G$ we can colour $G\setminus\{v\}$ without a rainbow $K^{(3)}_4$-copy. We then extend such colouring to $G$ by using $c_v$ to color the edges containing $v$, 
thus obtaining a colouring without a rainbow $K^{(3)}_4$-copy. 
This is a contradiction with the choice of $G$.

Without loss of generality, we may now assume $G[\{v_1, \ldots, v_5\}] \cong K_5^{(3)}$. Then from $e(G)\le16$ and $e(K^{(3)}_5)=10$ it follows that $\deg_G(v_i) \le 6$ for every $v_i \in \{v_6, \ldots, v_n\}$. By Claims~\ref{obs:link_col} and~\ref{obs:except_1}, we know that the link of every vertex has to contain a copy of $K_4$. Thus $G_{v_6} \cong K_4$ and every edge of $G$ has to either contain $v_6$ or belong to $G[\{v_1, \ldots v_5\}]$. This is only possible if $v(G) = 6$ and so $G$ is isomorphic to  two copies of $K^{(3)}_5$ that share $4$ vertices. 
\end{proof}

We combine the previous claims to derive the following lemma, which we then use as a base for the induction in the proof of Lemma \ref{lemma:nbounded_hyper}.

\begin{lemma}\label{lem:four_critical}
If $G$ is a $4$-graph with $m(G) \le 4$ then $G\naramcolk[2] K^{(4)}_5$.
\end{lemma}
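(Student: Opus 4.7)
The plan is to argue by contradiction: let $G$ be a vertex-minimal $4$-graph with $m(G)\le 4$ such that $G\aramcolk[2] K^{(4)}_5$ (isolated vertices can be discarded). Handshaking gives $4e(G)=\sum_u\deg_G(u)\le 16v(G)$, so some vertex $v$ has $\deg_G(v)\le 16$, and Claim~\ref{obs:link_col} yields $G_v\aramcolk[2] K_4^{(3)}$. Since $G_v$ is a $3$-graph with at most $16$ edges and no isolated vertices, Claim~\ref{obs:except_2} forces $G_v$ to be the sporadic structure consisting of two copies of $K_5^{(3)}$ sharing exactly $4$ vertices, and in particular $\deg_G(v)=16$. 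The same reasoning applied to any vertex $u$ with $\deg_G(u)<16$ would give $e(G_u)<16$, so $G_u\naramcolk[2] K_4^{(3)}$; then combining a rainbow-$K_4^{(3)}$-free colouring of $G_u$ (with fresh colours) with a rainbow-$K_5^{(4)}$-free colouring of $G-u$ (which exists by minimality, since $m(G-u)\le m(G)\le 4$) would furnish the forbidden colouring of $G$. Hence every vertex of $G$ has degree exactly $16$ and every link is sporadic.

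I would then label $V(G_v)=\{a_1,a_2,a_3,a_4,b_1,b_2\}$ with shared $4$-set $\{a_1,\dotsc,a_4\}$. Each of the nine copies of $K_4^{(3)}$ in $G_v$ gives a copy of $K_5^{(4)}$ through $v$, and thereby forces one additional edge of $G$ avoiding $v$: namely $\{a_1,a_2,a_3,a_4\}$ together with $\{a_i,a_j,a_k,b_t\}$ for every $\{i,j,k\}\in\binom{[4]}{3}$ and $t\in\{1,2\}$. A direct count shows that $a_1$ already sits in $9$ edges through $v$ (since $\deg_{G_v}(a_1)=9$) and in $7$ of these forced edges, totalling $16$; so $\deg_G(a_1)=16$ forces $a_1$ to have no other neighbour, and by symmetry the same conclusion holds for $a_2,a_3,a_4$. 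In particular none of the $a_r$'s is joined to any vertex outside $L:=\{v,a_1,\dotsc,a_4,b_1,b_2\}$.

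The last step is to analyse $G_{b_1}$, which must also be sporadic on exactly $6$ vertices. Already $G_{b_1}$ contains the full $K_5^{(3)}$ on $\{v,a_1,\dotsc,a_4\}$ (the six edges $\{v,a_i,a_j\}$ come from the edges of $G$ through both $v$ and $b_1$, and the four edges $\{a_i,a_j,a_k\}$ from the forced edges $\{a_i,a_j,a_k,b_1\}$). Hence the sporadic structure of $G_{b_1}$ must consist of this $K_5^{(3)}$ together with a second $K_5^{(3)}$ on $T\cup\{y\}$, where $T\subseteq\{v,a_1,\dotsc,a_4\}$ has size $4$ and $y$ is the sixth vertex of $G_{b_1}$. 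If $v\in T$, the second $K_5^{(3)}$ creates a new edge of $G_{b_1}$ of the form $\{v,a_r,y\}$, corresponding to an edge $\{v,a_r,b_1,y\}$ of $G$ and thus to a triple $\{a_r,b_1,y\}$ in $G_v$; this is impossible since $V(G_v)=L\setminus\{v\}$ and $G_v$ excludes every triple containing both $b_1$ and $b_2$. If instead $T=\{a_1,\dotsc,a_4\}$, then the six new edges $\{a_i,a_j,y\}$ of $G_{b_1}$ correspond to edges $\{a_i,a_j,b_1,y\}$ of $G$, each of which produces a new incidence at some $a_r$, violating the saturation $\deg_G(a_r)=16$ established in the previous paragraph. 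The main obstacle is precisely this last case analysis, where one must verify that no placement of the second $K_5^{(3)}$ in $G_{b_1}$ can be reconciled both with the structure of $G_v$ and with the degree saturation of the $a_r$'s. Since both cases fail, no minimal counterexample exists and the lemma follows.
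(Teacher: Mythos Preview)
Your overall strategy matches the paper's---reduce to a $16$-regular $4$-graph whose every link is the sporadic two-copies-of-$K_5^{(3)}$ structure, then derive a contradiction by comparing links of neighbouring vertices---but your second paragraph contains an unjustified step that carries the whole argument.

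You assert that ``each of the nine copies of $K_4^{(3)}$ in $G_v$ gives a copy of $K_5^{(4)}$ through $v$, and thereby forces one additional edge of $G$ avoiding $v$.'' This is backwards: a $K_4^{(3)}$ on $\{w_1,\dots,w_4\}$ in $G_v$ only certifies the four edges of $G$ of the form $\{v,w_i,w_j,w_k\}$; the fifth edge $\{w_1,\dots,w_4\}$ is not automatic. To force it you would need, for instance, an analogue of the ``furthermore'' part of Claim~\ref{obs:except_1} for the sporadic $3$-graph (i.e.\ a $2$-bounded colouring with a \emph{prescribed} unique rainbow $K_4^{(3)}$), or a direct argument via the link $G_{a_1}$: since $\deg_{G_v}(a_1)=9$, the six vertices of $G_{a_1}$ are exactly $\{v,a_2,a_3,a_4,b_1,b_2\}$, and $v$ is a shared vertex there; now if some ``forced'' triple through $a_1$ were absent from $G_{a_1}$, the non-edge pattern would contradict that the triples $\{v,a_j,a_k\}$ and $\{v,a_j,b_t\}$ are all present. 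This works, but it is a genuine argument you omit, and without it your degree-saturation count $9+7=16$ for the $a_r$'s is unfounded---so the final case analysis on $G_{b_1}$ has nothing to stand on.

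The paper sidesteps this entirely. In its notation the shared vertices are the $b_i$ and the non-shared ones are $a_1,a_2$; it looks at $G_{a_1}$, observes its vertex set contains $\{x,b_1,\dots,b_4\}$, and shows the sixth vertex must be $a_2$ by checking that otherwise some $G_{b_i}$ would acquire a seventh vertex. From $\{x,a_1,a_2,b_i\}\notin E(G)$ it then reads off enough edges through the $b_i$'s to push $\deg(b_1)\ge 18$. No ``forced edge'' step is needed.
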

\begin{proof}
Suppose the claim is false and let $G$ be a vertex-minimal $4$-graph with $m(G)\le 4$ and $G \aramcolk[2] K_5^{(4)}$. Since $4\ge m(G)\ge \sum_{x\in V(G)} \deg(x)/(4 v(G))$, it follows from the minimality of $G$ and Claims~\ref{obs:link_col} and~\ref{obs:except_2} that for all $x \in V(G)$ we have $\deg(x)=16$ and the link $G_x$ is isomorphic to two copies of $K^{(3)}_5$ sharing $4$ vertices. Consider any vertex $x\in V(G)$ and let two copies of $K^{(3)}_5$ in $G_x$ be on the vertex sets $\{a_1,b_1,b_2, b_3, b_4\}$ and $\{a_2,b_1,b_2,b_3, b_4\}$. Note that $\{x, a_1, a_2, b_i\} \notin E(G)$ for every $b_i \in \{b_1, b_2, b_3, b_4\}$.

Next, we consider the link $G_{a_1}$. Then $\{b_1, b_2, b_3, b_4, x\} \in V(G_{a_1})$ and let $a'$ be the remaining vertex. If $a' \neq a_2$ then there exists $b_i$, say $b_1$, such that $\{b_2, b_3, b_4, a_1, a_2, a', x\} \in V(G_{b_1})$, which is not possible. Applying the same argument to $G_{a_2}$, we have
$$ V(G_{a_1}) = \{b_1, b_2, b_3, b_4, x, a_2\} \quad \text{and} \quad V(G_{a_2}) = \{b_1, b_2, b_3, b_4, x, a_1\}. $$
It follows now from $\{x, a_1, a_2, b_i\} \notin E(G)$ that $\{b_1, b_2, b_3, b_4\}$ induces a $K_4^{(3)}$-copy in $G_{a_1}$ and $G_{a_2}$, and furthermore $\{b_1, b_j, a_1, a_2\} \in E(G)$ for every $b_j \in \{b_2, b_3, b_4\}$. This implies $\deg(b_1) \ge 18$, thus a contradiction.
\end{proof}

We are now ready to prove Lemma \ref{lemma:nbounded_hyper}. We split the proof into two parts. First, we consider cliques of the type $K_{\ell + 1}^{(\ell)}$.

\begin{proof}[Proof of Lemma \ref{lemma:nbounded_hyper} -- small cliques $K^{(\ell)}_{\ell+1}$,  $\ell\ge 4$ ]
We prove the assertion by induction on $\ell$. The case $\ell=4$ follows from Lemma~\ref{lem:four_critical} as 
$m_4(K^{(4)}_5)=4$. Next, let $\ell>4$ and  assume that the claim holds for $K^{(\ell-1)}_{\ell}$. Let us assume towards the contradiction that there exists an $\ell$-graph $G$ with $m(G)\le m_{\ell}(K^{(\ell)}_{\ell+1})=\ell$ such that $G \aramcolk[2] K_{\ell+1}^{(\ell)}$. Furthermore, let $G$ be a vertex-minimal such $\ell$-graph. 
Claim~\ref{obs:link_col} implies 
\[
 G_u \aramcolk[2] K^{(\ell-1)}_{\ell}
\]
for every vertex $u\in V(G)$. 
By the induction hypothesis we must have
$$m(G_u)>m_{\ell-1}(K^{(\ell-1)}_{\ell})=\ell-1.$$
Consider some $S\subseteq V(G_u)$ such that $m(G_u) = e(G_u[S]) / |S|$. Note that $|S|\ge \ell+2$ as otherwise $e(G_u[S]) \le \binom{\ell+1}{\ell-1} = \binom{\ell+1}{2}$ and thus $m(G_u) \le \ell/2<\ell-1$, contradicting our assumption. Hence, $e(G_u) \ge e(G_u[S]) = m(G_u) \cdot |S| \ge(\ell-1) (\ell+2) > \ell^2$. On the other hand, a vertex $u$ of minimum degree satisfies
\begin{equation}\label{mindegree}
e(G_u)\le \ell\cdot m(G)\le\ell \cdot m_{\ell}(K^{(\ell)}_{\ell+1})=\ell^2,
\end{equation}
yielding the desired contradiction.
\end{proof}

\begin{proof}[Proof of Lemma \ref{lemma:nbounded_hyper} -- large cliques $K^{(\ell)}_{r}$, $r\ge \ell+2$ ]
We prove the lemma by induction on $\ell$. For $\ell = 2$ the claim follows from Lemma \ref{lemma:nbounded_general}. Let now $\ell > 2$ and assume that the claim holds for all $K_{r}^{(\ell-1)}$ with $r \ge \ell + 2$.

Let us assume towards a contradiction that there exists some $r \ge \ell + 2$ and an $\ell$-graph $G$ with $m(G) \le m_\ell(K_r^{\ell})$ such that $G \aramcolk[2] K_r^{(\ell)}$. Furthermore, we assume that $G$ is a minimal such $\ell$-graph with respect to the number of vertices. We show that then
\begin{equation}
\delta(G) > (r + 1) \cdot m_{\ell - 1}(K_{r-1}^{(\ell - 1)}). \label{eq:hyper_min_degree}
\end{equation}
Assuming that equation \eqref{eq:hyper_min_degree} holds, we can lower bound $m(G)$ as follows,
\begin{align}
e(G) / v(G) &=  \frac{ \sum_{v \in V(G)}\deg(v)}{v(G) \cdot \ell} > \frac{r+1}{\ell} \cdot m_{\ell - 1}(K_{r-1}^{(\ell - 1)}) = \frac{(r+1) \cdot (\binom{r-1}{\ell - 1} - 1)}{ \ell (r - 1 - \ell + 1)} \nonumber \\
&= \frac{(r + 1) \binom{r-1}{\ell - 1}}{\ell (r - \ell)} - \frac{r + 1}{\ell (r - \ell)} \stackrel{(r\ge\ell+2)} \ge \frac{(r + 1) \cdot \tfrac{\ell}{r} \binom{r}{\ell}}{\ell (r - \ell)} - \frac{r + 1}{r} \nonumber \\
&> \frac{r + 1}{r} \cdot m_\ell({K_r^{(\ell)}}) - \frac{r + 1}{r} = m_\ell(K_r^{(\ell)}) + \frac{m_\ell(K_r^{(\ell)}) - (r + 1)}{r}. \label{eq:hyper_density_est}
\end{align}
On the other hand, for $r \geq \ell+3$ and since $\ell \ge 3$ we have
$$ m_\ell(K_r^{(\ell)}) = \frac{\binom{r}{\ell} - 1}{r - \ell} \ge \frac{\binom{r}{3} - 1}{r - 3} \ge r + 1. $$
Furthermore, for $r=\ell+2\ge 6$ we have $m_\ell(K_r^{(\ell)})\ge r+1$ as well. 
 Together with \eqref{eq:hyper_density_est}  this implies $m(G) > m_\ell(K_r^{(\ell)})$ for $r\ge \ell+2$ but $(r,\ell)\neq(5,3)$, which contradicts our choice of $G$ in this case. It remains to consider the cases $r = 5$ and $\ell = 3$.   
 One easily checks that in this case
$$ m(G) \stackrel{\eqref{eq:hyper_density_est}}> \frac{r + 1}{\ell} \cdot m_{\ell-1}(K_{r-1}^{(\ell - 1)}) \ge m_\ell(K_r^{(\ell)}), $$
again contradicting the assumption on $G$. Therefore, no such $G$ exists and the claim follows.

It remains to prove equation \eqref{eq:hyper_min_degree}. Consider some vertex $u \in V(G)$ of minimum degree. Similarly to the case of $K^{(\ell)}_{\ell+1}$ cliques, the minimality of $G$ implies that 
\begin{equation}
 G_u \;\aramcolk[2]\; K_{r-1}^{(\ell - 1)}. \label{eq:link_G_v}
\end{equation}
With~\eqref{eq:link_G_v} it follows from the induction assumption that
\begin{equation}
m(G_u) > m_{\ell - 1}(K_{r-1}^{(\ell - 1)}). \label{eq:m_G_u}
\end{equation}
One easily checks that
\begin{align*}
m(K_{r}^{(\ell - 1)}) = \frac 1 r \binom{r}{\ell - 1}  = \frac{1}{r} \cdot \frac{r}{r - \ell + 1} \binom{r-1}{\ell - 1} < \frac{\binom{r-1}{\ell - 1} - 1}{r - \ell} = m_\ell(K_{r-1}^{(\ell - 1)}).
\end{align*}
Together with \eqref{eq:m_G_u} this implies that the densest subgraph of $G_u$ has to be a graph on at least $r+1$ vertices. Thus, we get from \eqref{eq:m_G_u} that
$e(G_u) > (r+1) \cdot m_{\ell-1}(K_{r-1}^{(\ell - 1)})$ and as $\delta(G) \geq e(G_u)$ this concludes the proof of \eqref{eq:hyper_min_degree}.
\end{proof}

\subsection{The Ramsey problem for hypergraph cliques} \label{sec:ramsey_cliques}

As a last application of our method we give a proof of Theorem \ref{thm:ramsey_cliques}. 

\begin{proof}[Proof of Theorem \ref{thm:ramsey_cliques}]
Observe that if a hypergraph $G$ is not $2$-bounded anti-Ramsey for $F$ then it is also not Ramsey for $F$. Indeed, consider some $2$-bounded colouring of $G$ without a rainbow copy of $F$. As each colour occurs at most twice, we can colour one edge red and the other one blue. Now observe that any monochromatic subgraph in this colouring  corresponds to a rainbow subgraph in the original colouring. Thus, no monochromatic copy of $F$ appears. As an immediate consequence of Theorem \ref{thm:2_bnd} we get the $0$-statement of Theorem \ref{thm:ramsey_cliques} for all $\ell$-graphs which are cliques of size at least $\ell + 1$ with the exception of the (hyper)graphs $K_3$ and $K^{(3)}_4$. The case of $K_3$ was already shown in Theorem~\ref{thm:rr}, thus it remains to consider $K^{(3)}_4$. 

Note that  Algorithm \ref{algo:ar_2bnd}, with line $4$ changed such that it assigns red colour to $e_1$ and blue to $e_2$, provides a $2$-colouring of the hypergraph $G$. As the analysis and the correctness of the algorithm remains the same as in the proof of Theorem \ref{thm:2_bnd}, it suffices to show that if $m(G') \le m_3(K^{(3)}_4)=3$ then $G'  \nramcolk[2] K^{(3)}_4$.



Let $G$ be a vertex minimal graph with $G \ramcolk[2] K^{(3)}_4$ and let
 $u \in V(G)$ be a vertex  of minimum degree. Claim~\ref{obs:link_col}  yields $G_u  \ramcolk[2] K_3$. However, $\deg(u)\le 3\cdot m(G)\le 3\cdot m_{3}(K^{(3)}_4)=9$ and it is easy to see that any graph with less than $15$ edges is not Ramsey for $K_3$ and two colors, see e.g.~\cite{EFRS78}.
\end{proof}

In a forthcoming paper~\cite{GNPSST} we extend Theorem~\ref{thm:ramsey_cliques} to various classes of $\ell$-graphs other than cliques. Furthermore we find examples of $\ell$-graphs $F$, where the threshold $p$ for $\Hknp\ramcolk[k] F$ is neither determined by $m_{\ell}(F)$ nor by a density  $m(G)$ of some obstruction $\ell$-graph $G$, 
but rather exhibits some asymmetric behaviour. 

\bibliographystyle{abbrv}
\bibliography{ramsey}

%
%

\end{document}